\newtheorem{theorem}{Theorem}
\newtheorem{prop}[theorem]{Proposition}
\newtheorem{lemma}[theorem]{Lemma}
\newtheorem{corollary}[theorem]{Corollary}
\newtheorem{definition}[theorem]{Definition}
\def\T{\mathcal{T}}
\def\R{\mathbb{R}}
\begin{document}

\title[discrete uniformization hyperbolic]{A discrete uniformization theorem
for polyhedral surfaces II}

\author{Xianfeng Gu}

\address{Department of Computer Science, Stony Brook University, Stony Brook, NY, 11794}

\email{gu@cs.stonybrook.edu}

\author{Ren Guo}

\address{Department of Mathematics, Oregon State University, Corvallis, OR, 97330}

\email{guore@math.oregonstate.edu}

\author{Feng Luo}

\address{Department of Mathematics, Rutgers University, Piscataway, NJ, 08854}

\email{fluo@math.rutgers.edu}

\author{Jian Sun}

\address{Mathematical Science Center, Tsinghua University, Beijing, 100084, China}

\email{jsun@math.tsinghua.edu.cn}

\author{Tianqi Wu}

\address{Mathematical Science Center, Tsinghua University, Beijing, 100084, China}

\email{mike890505@gmail.com}

\subjclass[2000]{52C26, 58E30, 53C44}

\keywords{hyperbolic metrics, discrete uniformization, discrete
conformality, discrete Yamabe flow, variational principle, and
Delaunay triangulation.}

\begin{abstract} A discrete conformality for
hyperbolic polyhedral surfaces is introduced in this paper. This
discrete conformality is shown to be computable.
 It is proved that each hyperbolic polyhedral metric on a closed
  surface is discrete conformal to a unique hyperbolic polyhedral
   metric with a given discrete curvature satisfying Gauss-Bonnet formula. Furthermore, the hyperbolic
   polyhedral metric with given curvature can be obtained using a
   discrete Yamabe flow with surgery. In particular, each hyperbolic
   polyhedral metric on a closed surface with negative Euler characteristic is
   discrete conformal to a unique hyperbolic metric.
\end{abstract}

\maketitle

\section{Introduction}
\subsection{Statement of results}

This is a continuation of  \cite{glsw} in which a discrete
uniformization theorem for Euclidean polyhedral metrics on
closed surfaces is established. 
The purpose of this paper is to prove the counterpart of discrete
uniformization for hyperbolic polyhedral metrics. In particular,
we will introduce a discrete conformality for hyperbolic
polyhedral metrics on surfaces and show the discrete conformality
is algorithmic.

Recall that a \it marked surface \rm $(S,V)$ is a pair of a closed
connected surface $S$ together with a finite non-empty subset $V
$. A \it triangulation \rm of a marked surface $(S,V)$ is a
triangulation of $S$ so that its vertex set is $V$.
 A \it
hyperbolic polyhedral metric $d$ \rm on a marked surface $(S, V)$
is obtained as the isometric gluing of hyperbolic triangles along
pairs of edges so that its cone points are in $V$. It is the same
as a hyperbolic cone metric on $S$ with cone points in $V$. We use
the terminology \it polyhedral metrics \rm to emphasize that these
metrics are determined by finite sets of data (i.e., the finite
set of lengths of edges).
 Every hyperbolic
polyhedral metric has an associated \it Delaunay triangulation \rm
which has the property that the interior of the circumcircle of
each triangle contains no other vertices.

\begin{definition}\label{12345}(discrete conformality)
Two hyperbolic polyhedral metrics $d, d'$ on a closed marked
surface $(S, V)$ are {\it discrete conformal} if there exist a
sequence of hyperbolic polyhedral metrics $d_1=d, d_2, ...,
d_m=d'$ on $(S, V)$ and triangulations $\mathcal{T}_1,
\mathcal{T}_2, ..., \mathcal{T}_m$ of $(S, V)$ satisfying
\begin{itemize}
\item[(a)] each $\mathcal{T}_i$ is Delaunay in $d_i$,

\item[(b)] if $\mathcal{T}_i=\mathcal{T}_{i+1}$, there exists a function $u: V\to \mathbb{R}$,
 called a {\it conformal factor}, so that if $e$ is an edge in $\mathcal{T}_i$ with end
 points $v$ and $v'$, then the lengths $x_{d_i}(e)$ and $x_{d_{i+1}}(e)$ of $e$ in metrics $d_i$ and $d_{i+1}$ are related by
$$\sinh \frac{x_{d_{i+1}}(e)}2= e^{u(v)+u(v')} \sinh \frac{x_{d_i}(e)}2,$$

\item[(c)] if $\mathcal{T}_i\neq \mathcal{T}_{i+1},$ then $(S, d_i)$ is isometric to
$(S, d_{i+1})$ by an isometry homotopic to the identity in $(S, V)$.
\end{itemize}
\end{definition}

This definition is the hyperbolic counterpart of discrete
conformality introduced in \cite{glsw}. The condition (b) first
appeared in
\cite{bps}. 


\begin{theorem}\label{algo} Suppose $d$ and $d'$ are two hyperbolic (or Euclidean) polyhedral metrics
given as isometric gluings of geometric triangles on a closed
marked surface $(S,V)$. There exists an algorithm to decide if $d$
and $d'$ are discrete conformal.
\end{theorem}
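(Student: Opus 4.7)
\emph{Proof plan.} The strategy is to reduce the decision problem to computing and comparing a canonical representative in each discrete conformal class, via the uniformization theorem stated in the abstract of this paper.

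First, discrete conformality is an equivalence relation: reflexivity takes $u\equiv 0$; symmetry of (b) follows by negating the conformal factor, and of (c) is automatic; transitivity is concatenation of sequences. Hence $d$ and $d'$ are discrete conformal iff they reduce to the same canonical representative. By the uniformization theorem, once a target discrete curvature $K^*:V\to\R$ satisfying Gauss-Bonnet is fixed, each discrete conformal class contains a unique hyperbolic polyhedral metric $\Psi(d)$ with curvature $K^*$. Fixing such a $K^*$ once and for all (for instance $K^*\equiv 0$ when $\chi(S)<0$), we have $d\sim d'$ iff $\Psi(d)$ and $\Psi(d')$ are isometric on $(S,V)$ by an isometry homotopic to the identity.

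Second, to compute $\Psi(d)$ I would use the discrete Yamabe flow with surgery, also asserted in the abstract: starting from $d$, one evolves the conformal factor $u$ by the gradient flow of a strictly convex energy on $\R^V$, punctuated by discrete edge-flip surgeries whenever the current triangulation ceases to be Delaunay. Between surgeries the triangulation is fixed, and within each stage $\Psi(d)$ is characterized as the unique minimum of an explicit strictly convex function of $u$. Having produced $\Psi(d)$ and $\Psi(d')$, the final comparison is finite: enumerate bijections of $V$ that are realized by isometries of the underlying Delaunay triangulations preserving edge lengths, and for each candidate check that the induced action on a generating set of $\pi_1(S\setminus V)$ is trivial, certifying homotopy to the identity.

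The main obstacle is showing that the algorithm terminates. This requires (i) that only finitely many surgery events occur along the flow trajectory, and (ii) that each convex minimization can be solved exactly enough to make the final comparison unambiguous. For (i) I would argue from properness and strict convexity of the Yamabe energy that the trajectory stays in a bounded region of $\R^V$, and then invoke a local finiteness property of the Delaunay-type stratification of conformal-factor space: any bounded region meets only finitely many cells, giving a uniform lower bound on the time between surgeries and hence finitely many surgeries along the whole flow. For (ii), after the substitution $z_v=e^{u(v)}$ the critical-point equations become algebraic-transcendental in finitely many unknowns, and since the final equality test is combinatorial, finite-precision approximations with explicit error bounds from strict convexity suffice. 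I expect step (i), the finiteness of surgeries, to be the technical heart of the argument, with the rest reducing to bookkeeping on hyperbolic triangle data.
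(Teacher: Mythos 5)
Your proposal takes a genuinely different route from the paper, and unfortunately it contains a gap that I do not think is repairable without substantial new work. The paper's actual argument never solves any optimization problem: it moves to Delaunay triangulations (Proposition~\ref{134}(c)), transports the data into the decorated Teichm\"uller space via the maps $A_{\T}$, and then uses Theorem~\ref{17} to reduce the question ``$d\sim d'$?'' to ``do the two decorated metrics have the same underlying complete hyperbolic structure on $\Sigma$?'' That last question is answered by comparing Thurston shear coordinates, after using Mosher's algorithm together with Penner's Ptolemy identity to bring the two $\lambda$-length vectors into a common triangulation. Every step there is a finite composition of closed-form algebraic operations on the input edge-length data.

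The gap in your plan is the final equality test. You propose to compute the canonical uniformized representatives $\Psi(d)$ and $\Psi(d')$ by running the Yamabe flow (equivalently, minimizing the convex energy $W^*$), and then to check whether they are isometric via a map homotopic to the identity. But $\Psi(d)$ is the \emph{limit} of the flow, or the exact minimizer of a transcendental convex function; in finite time you only obtain an approximation $u^{(N)}$ with a guaranteed error bound $\|u^{(N)}-u^*\|<\varepsilon$. To decide whether $\Psi(d)=\Psi(d')$ from approximations you would also need an effective separation bound: a computable $\delta>0$, depending on the inputs, such that if the two conformal classes are distinct then the uniformized metrics differ by at least $\delta$ in some coordinate. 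Strict convexity of $W^*$ gives you fast convergence to $u^*$, but it says nothing about how close the two \emph{distinct} minimizers (one per input metric) can be. You assert that ``the final equality test is combinatorial,'' but it is not: after fixing the finitely many candidate simplicial isomorphisms, you still must compare real edge lengths, and without a separation bound this is exactly the kind of real-number equality test that approximation cannot settle. Your step (ii) therefore does not go through as stated, and this is the heart of the matter, not a bookkeeping issue.

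A secondary point: you fix $K^*\equiv 0$, which only makes sense when $\chi(S)<0$; for general closed marked surfaces you would need to pick some admissible $K^*$ in $\{x\in(-\infty,2\pi)^V:\sum x_v>2\pi\chi(S)\}$, which is easy but should be said. And while (i) (finiteness of surgeries along the flow) is plausible and is in the spirit of Akiyoshi's finiteness theorem used in Section~4, the flow would also need to detect cell crossings, which again is a real-number comparison. The paper sidesteps all of this by never flowing: it compares the inputs directly in coordinates where discrete conformality becomes ``same underlying hyperbolic structure,'' which is an algebraic identity among finitely many explicitly computed quantities.

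\end{document}
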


The above theorem shows that discrete conformality is computable.
This is in contrasts to the conformality in Riemannian geometry.
Indeed, it is highly unlikely that there exist algorithms to
decide if two hyperbolic (or Euclidean) polyhedral metrics on
$(S,V)$ are conformal in the Riemannian sense.

 The \it discrete curvature $K$ \rm of a polyhedral metric $d$ is
the function defined on $V$ sending $v \in V$ to $2\pi$ less cone
angle at $v$.  It is well known that the discrete curvature
satisfies the Gauss-Bonnet identity $\sum_{v \in V}K(v)
=2\pi\chi(S) +Area(d)$ where $ Area(d)$ is the area of the metric
$d.$

\begin{theorem}\label{thm:hyperbolic}
 Suppose $(S,V)$ is a closed connected marked surface and $d$
 is a hyperbolic polyhedral metric on $(S, V)$. Then for any $K^*: V \to (-\infty, 2\pi)$ with \\
$\sum_{v\in V} K^*(v)> 2\pi \chi(S),$ there exists a unique
hyperbolic polyhedral metric $d'$ on $(S, V)$ so that $d'$ is
discrete conformal to $d$ and the discrete curvature of $d'$ is
$K^*$. Furthermore, the discrete Yamabe flow with surgery
associated to curvature $K^*$ having initial value $d$ converges
to $d'$ exponentially fast.
\end{theorem}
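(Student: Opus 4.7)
The plan is to adapt the variational framework used in the Euclidean case \cite{glsw} to the hyperbolic setting. For each Delaunay triangulation $\T$ of $(S,V)$, the conformal factor $u\in\R^V$ parametrizes hyperbolic polyhedral metrics via the edge-length rule in Definition \ref{12345}(b). I would define a Ricci-type energy $\Phi_\T(u)$ on the open cell $U_\T \subset \R^V$ on which $\T$ remains Delaunay with non-degenerate triangles, built (following Bobenko--Pinkall--Springborn) as a sum over triangles of the Legendre dual of Milnor's Lobachevsky/hyperbolic volume function. A direct differentiation shows $\partial \Phi_\T/\partial u(v) = K(v) - K^*(v)$, so critical points are precisely the metrics with prescribed discrete curvature $K^*$, and the hypothesis $\sum K^*(v) > 2\pi\chi(S)$ is exactly the Gauss--Bonnet constraint required for such a hyperbolic polyhedral metric to exist.

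The next task is to show that the local energies patch into a single $C^1$ convex function $\Phi$ on all of $\R^V$. Within a fixed cell $U_\T$, the Hessian of $\Phi_\T$ is the cotangent-type quadratic form associated to $\T$; in the hyperbolic regime it is strictly positive definite (unlike the Euclidean setting there is no overall scaling kernel to quotient out), which gives strict convexity. Across a wall separating $U_\T$ from $U_{\T'}$, where the two triangulations differ by a single edge flip, the hyperbolic Ptolemy identity together with the fact that the two triangulated surfaces are isometric on the wall forces $\Phi_\T$ and $\Phi_{\T'}$ to agree with matching gradients. This is the hyperbolic analogue of the gluing lemma of \cite{glsw} and upgrades $\Phi$ to a globally defined, $C^1$, strictly convex function on $\R^V$.

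The main obstacle I expect is proving that $\Phi$ is proper on $\R^V$, i.e.\ ruling out degenerations in which $\|u\|\to\infty$ while $\Phi(u)$ stays bounded. The surgery is indispensable here: whenever a Delaunay edge would acquire a negative cotangent weight along the flow, an edge flip replaces $\T$ by a new Delaunay triangulation, keeping all angles and edge lengths in the admissible range and preventing triangle collapse. Using the Gauss--Bonnet normalization $\sum K^*(v)>2\pi\chi(S)$, one estimates $\Phi(u)$ from below by a coercive linear function of $u$, so the infimum is attained at a unique critical point $u^*$ corresponding to the desired metric $d'$; uniqueness follows at once from strict convexity.

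Finally, the discrete Yamabe flow with surgery is the negative gradient flow $\dot u = K^*-K = -\nabla\Phi$, interrupted by Delaunay-driven edge flips that change only the combinatorics and leave $u$ and $\Phi$ untouched. Long-time existence follows because $\Phi$ is bounded below and decreases monotonically along the flow, while the positive definite Hessian at the minimizer $u^*$ yields the exponential estimate $\|u(t)-u^*\| = O(e^{-ct})$ for some $c>0$ by standard linearization.
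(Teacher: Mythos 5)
Your overall strategy is the same as the paper's: both proofs are variational in the Bobenko--Pinkall--Springborn style, organized around a globally defined $C^1$ strictly convex function on the conformal factor space $\R^V$, with $C^1$ gluing across Delaunay walls coming from the hyperbolic Ptolemy identity, and the discrete Yamabe flow with surgery realized as its (negative) gradient flow. Your identification of cells, the Hessian positivity, the edge-flip surgery, and the linearization argument for exponential convergence all match the paper.

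The genuine gap is in your properness step, and it is not a small one. You assert that ``$\Phi(u)$ is bounded below by a coercive linear function of $u$'' and hence proper on all of $\R^V$, appealing vaguely to Gauss--Bonnet. That claim as stated is false: the energy $W^*(u) = W(u) - \sum_i K^*_i u_i$ is coercive exactly when $K^*$ lies in the open set $\mathbf{P} = \{x \in (-\infty,2\pi)^n : \sum x_i > 2\pi\chi(S)\}$, which is precisely what has to be \emph{proved}, not assumed. Equivalently, one must show the gradient map $F = \nabla W : \R^n \to \mathbf{P}$ is onto, and the paper does this by a careful degeneration analysis: taking a sequence $u^{(m)}$ that escapes every compact set, passing to a subsequence lying in a single Delaunay cell (this is where Akiyoshi's finiteness theorem is essential and not mentioned in your sketch), classifying vertices as ``black'' ($u_i^{(m)}\to-\infty$) or ``white,'' proving that adjacent to a white vertex with a black neighbor all neighbors must be black (Lemma~\ref{thm:limit}(a)) and the angle at the white vertex collapses (Lemma~\ref{thm:limit}(b)), and then running three cases to conclude $F(u^{(m)})$ leaves every compact subset of $\mathbf{P}$. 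None of this is present or suggested in your outline, and I don't see how to replace it with a one-line coercivity bound.

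Two further points worth flagging. First, your patching/gluing paragraph asserts matching gradients across walls, but in the hyperbolic setting this $C^1$ matching is a genuinely delicate computation (the entire appendix, Lemma~\ref{thm:derivative}, where $\zeta_1=\zeta_2=\zeta_3$ precisely on the wall). Second, you implicitly assume that Delaunay-ness keeps the edge lengths in the admissible range; the paper needs Theorem~\ref{9} (every Delaunay triangle of a closed hyperbolic polyhedral surface has a \emph{compact} circumcircle) and Corollary~\ref{12} to justify that the Delaunay inequalities imply the triangle inequalities. Without this, the BPS cotangent-Hessian positivity and the surgery step are not yet available, so it deserves explicit mention in any honest write-up.
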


In particular, on a closed connected surface $S$ with $\chi(S)<0$,
by choosing $K^*=0$, we obtain,

\begin{corollary}(discrete uniformization) Let $S$ be a closed
connected surface of negative Euler characteristic and $V \subset
S$ be a finite non-empty subset. Then each hyperbolic polyhedral
metric $d$ on $(S,V)$ is discrete conformal to a unique hyperbolic
metric $d^*$ on the surface $S$. Furthermore, there exists a
$C^1$-smooth flow on the Teichmuller space of hyperbolic
polyhedral metrics on $(S,V)$ which preserves discrete conformal
classes and flows each polyhedral metric $d$ to $d^*$ as time goes
to infinity.
\end{corollary}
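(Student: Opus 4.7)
The plan is to derive the corollary as the special case $K^* \equiv 0$ of Theorem~\ref{thm:hyperbolic}. First I would check the Gauss--Bonnet hypothesis: with $K^*(v) = 0$ for every $v \in V$ we have $\sum_{v \in V} K^*(v) = 0$, while $\chi(S) < 0$ gives $2\pi\chi(S) < 0$, so the required inequality $\sum_v K^*(v) > 2\pi\chi(S)$ holds automatically. Theorem~\ref{thm:hyperbolic} then produces a unique hyperbolic polyhedral metric $d^*$ discrete conformal to $d$ with discrete curvature identically zero. Since the cone angle at every $v \in V$ equals $2\pi$, the metric $d^*$ has no genuine cone singularities and is therefore a smooth hyperbolic metric on $S$.

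For the $C^1$-smooth flow statement, the natural candidate is the discrete Yamabe flow with surgery of Theorem~\ref{thm:hyperbolic} applied with $K^* \equiv 0$, reinterpreted as a path in the Teichmuller space $\mathcal{T}(S,V)$ rather than in the conformal-factor coordinates attached to any single triangulation. Between surgery times the current Delaunay triangulation $\mathcal{T}_i$ is fixed, and the dynamics reduce to a smooth ODE of the form $\dot u = -K$ on $\mathbb{R}^V$, where the edge lengths of the evolving metric depend smoothly on $u$ via the rule in Definition~\ref{12345}(b); pushing this curve down to $\mathcal{T}(S,V)$ yields a smooth piece of trajectory. The exponential convergence of the conformal-factor dynamics asserted by Theorem~\ref{thm:hyperbolic} then implies that the induced path in $\mathcal{T}(S,V)$ limits to the point represented by $d^*$ as $t \to \infty$.

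The main obstacle is to upgrade continuity to $C^1$-smoothness at surgery instants, i.e., at times when the Delaunay triangulation changes via an edge flip. At such a moment, both the old triangulation $\mathcal{T}_i$ and the new $\mathcal{T}_{i+1}$ are Delaunay for the same polyhedral metric, so the underlying point in $\mathcal{T}(S,V)$ is unambiguous and the trajectory is at least $C^0$. The strategy I would pursue is to show that the instantaneous velocity, viewed in $\mathcal{T}(S,V)$, depends only on the intrinsic discrete curvature function $K: V \to \mathbb{R}$ of the current metric together with the conformal-factor rule, neither of which distinguishes between two equivalent Delaunay triangulations at a flip. Concretely, one compares the tangent vectors $\dot u$ computed in the coordinates coming from $\mathcal{T}_i$ and from $\mathcal{T}_{i+1}$, uses the fact that the shared edge lengths satisfy the flip compatibility and that the vertex curvatures agree, and verifies that both vectors descend to the same tangent vector in $\mathcal{T}(S,V)$. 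This smooth matching across all surgery times, combined with smoothness between surgeries and exponential convergence to $d^*$, yields the asserted $C^1$-smooth flow.
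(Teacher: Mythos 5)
Your main deduction is exactly the paper's: verify that $K^* \equiv 0$ satisfies $\sum_{v} K^*(v) = 0 > 2\pi\chi(S)$ (since $\chi(S) < 0$), apply Theorem~\ref{thm:hyperbolic}, and observe that curvature zero at every $v \in V$ means $d^*$ is a genuine smooth hyperbolic metric. The flow asserted in the corollary is likewise the one already constructed in Theorem~\ref{thm:hyperbolic}, so the corollary itself requires no further work beyond this specialization.

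One remark on your discussion of $C^1$-smoothness at surgery times: your worry is the right one, but the paper does not resolve it by an ad-hoc comparison of $\dot u$ on the two sides of a flip. Instead, the $C^1$-matching across flips is the content of the theorem that $A: T_{hp}(S,V) \to T_D(\Sigma)$ is a $C^1$ diffeomorphism, whose proof reduces precisely to Lemma~\ref{thm:derivative} (the derivative of the flipped diagonal length agrees with the derivative of the Ptolemy expression at cyclic configurations). With that in hand, $F = \nabla W$ is globally $C^1$, $W$ is globally $C^2$ and strictly convex, and the Yamabe flow with surgery is simply the gradient flow of the $C^2$ function $W^*(u) = W(u) - \sum_i K^*_i u_i$, which is automatically $C^1$ in time and converges to the unique minimizer. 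So your plan to ``verify that both velocity vectors descend to the same tangent vector'' would, if carried out, essentially reprove Lemma~\ref{thm:derivative}; it is cleaner to invoke the $C^1$-diffeomorphism result directly.
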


\subsection{Basic idea of the proof}
The basic idea of the proof is similar to that of \cite{glsw}. We
first introduce the Teichm\"uller space $T_{hp}(S,V)$ of
hyperbolic polyhedral metrics on $(S,V)$. It is shown to be a real
analytic manifold which admits  a cell decomposition by the work
of \cite{Leibon} and \cite{ha}. Using the work of Kubota \cite{k}
on hyperbolic Ptolemy identity and the work of Penner
\cite{penner}, we show that $T_{hp}(S,V)$ is $C^1$ diffeomorphic
to the decorated Teichm\"uller space so that two hyperbolic
polyhedral metrics are discrete conformal if and only if their
corresponding decorated metrics have the same underlying
hyperbolic structure. Using this correspondence, we show Theorem
\ref{thm:hyperbolic} using a variational principle first appeared
in \cite{bps}.

Many arguments in this paper are similar to that of \cite{glsw}.
 The
major difference between Euclidean and hyperbolic polyhedral
metrics comes from the circumcircles of triangles. Namely, the
circumcircle of a hyperbolic triangle may be non-compact, i.e., a
horocyle or a curve of constant distance to a geodesic. This
creates many difficulties when one uses the inner angle
characterization of Delaunay triangulations. To overcome this, we
prove (theorem \ref{9}) that every triangle in a Delaunay
triangulation of a hyperbolic polyhedral metric on a compact
surface has compact circumcircle.

\subsection{Organization of the paper}
Section 2 deals with the Teichm\"uller space of hyperbolic
polyhedral metrics, its analytic cell decomposition and Delaunay
triangulations. In section 3, we show that there is a $C^1$
diffeomorphism between the Teichm\"uller space of hyperbolic
polyhedral metrics and  the decorated Teichm\"uller space.
Section 4 is devoted to the proof
 of Theorem \ref{thm:hyperbolic}. Section 5 proves theorem \ref{algo}.  In the appendix, a
 technical lemma is proved.

\subsection{Acknowledgement} The work is supported in part by the NSF of USA and
the NSF of China.

\section{Teichm\"uller space of polyhedral metrics}

\subsection{Triangulations and some conventions}
Take a finite disjoint union $X$ of triangles and identify edges
in pairs by homeomorphisms. The quotient space $S$ is a compact
surface together with a \it triangulation \rm $\T$ whose simplices
are the quotients of the simplices in the disjoint union $X$. Let
$V=V(\T)$ and $E=E(\T)$ be the sets of vertices and edges in $\T$.
We call $\T$ a \it triangulation \rm of the marked surface
$(S,V)$.  If each triangle in the disjoint union $X$ is hyperbolic
and the identification maps are isometries, then the quotient
metric $d$ on the quotient space $(S,V)$ is a called \it
hyperbolic polyhedral metric\rm. The set of cone points of $d$ is
in $V$. Given a hyperbolic polyhedral metric $d$ and a
triangulation $\T$ on $(S, V)$, if each triangle in $\T$ (in
metric $d$) is isometric to a hyperbolic triangle, we say $\T$ is
\it geodesic \rm in $d$. If $\T$ is a triangulation of $(S,V)$
isotopic to a geometric triangulation $\T'$ in a hyperbolic
polyhedral metric $d$, then the \it length \rm of an edge  $e \in
E(\T)$ (or \it angle \rm of a triangle at a vertex in $\T$) is
defined to be the length (respectively angle) of the corresponding
geodesic edge $e' \in E (\T')$ (triangle at the vertex) measured
in metric $d$.

Suppose $e$ is an edge in $\T$ adjacent to two distinct triangles
$t, t'$. Then the \it diagonal switch \rm on $\T$ is a new
triangulation $\T'$ obtained from $\T$ by  replaces $e$ by the
other diagonal in the quadrilateral $t \cup_e t'$.

For simplicity, the terms metrics and triangulations in many
places will mean isotopy classes of metrics and isotopy classes of
triangulations. They can be understood from the context without
causing confusion.

 If $X$ is a finite
set, $|X|$ denotes its cardinality and $\R^X$ denotes the vector
space $\{f: X \to \R\}$. For a finite vertex set $W=\{w_1, ...,
w_m\}$, we identify $\R^W$ with $\R^m$ by sending $x \in \R^m$ to
$(x(w_1), ..., x(w_m))$.

All surfaces are assumed to be compact and connected in the rest
of the paper.

\subsection{The Teichm\"uller space and the length coordinates}

Two hyperbolic polyhedral metrics $d, d'$ on $(S, V)$ are called
{\it Teichm\"uller equivalent} if there is an isometry $h: (S, V,
d)\to (S, V, d')$ so that $h$ is isotopic to the identity map on
$(S, V)$. The {\it Teichm\"uller space} of all hyperbolic
polyhedral metrics on $(S, V)$, denoted by $T_{hp}(S, V)$, 
is the set of all Teichm\"uller equivalence classes of hyperbolic polyhedral
metrics on $(S, V)$.

\begin{lemma}
$T_{hp}(S, V)$ is a real analytic manifold.
\end{lemma}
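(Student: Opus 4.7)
The plan is to equip $T_{hp}(S,V)$ with a real analytic atlas whose charts are indexed by isotopy classes of triangulations $\T$ of $(S,V)$. For each such $\T$, let $\Omega_\T\subset T_{hp}(S,V)$ consist of the Teichm\"uller classes of hyperbolic polyhedral metrics in which $\T$ is isotopic to a geodesic triangulation. The first step is to check that the edge-length map $\Phi_\T\colon \Omega_\T\to \R_{>0}^{E(\T)}$ sending $d$ to $(e\mapsto x_d(e))$ is a bijection onto the open subset $P_\T\subset \R_{>0}^{E(\T)}$ cut out by the strict triangle inequalities on each face of $\T$. Injectivity follows because $d$ is recovered, up to Teichm\"uller equivalence, by isometrically gluing hyperbolic triangles of the prescribed side lengths along the combinatorial pattern of $\T$; surjectivity follows from the same gluing construction applied to any admissible length vector. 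This realizes $\Omega_\T$ as an open subset of $\R^{E(\T)}$.

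Second, I would verify that $T_{hp}(S,V)=\bigcup_\T \Omega_\T$: every hyperbolic polyhedral metric on $(S,V)$ admits at least one geodesic triangulation with vertex set $V$, for instance any Delaunay triangulation in the sense of the introduction.

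The main step is to prove that on overlaps $\Omega_\T\cap \Omega_{\T'}$ the transition map $\Phi_{\T'}\circ\Phi_\T^{-1}$ is real analytic. I would first handle the case of a single diagonal switch: when $\T'$ differs from $\T$ by replacing an edge $e$ with the other diagonal of the enclosing quadrilateral, the new edge length is an explicit real analytic function of the five remaining edge lengths, obtained by applying the hyperbolic law of cosines inside each of the two original triangles to compute the two angles at the endpoint of the new diagonal, and then once more to the triangle containing the new diagonal. Since two geodesic triangulations of $d$ need not be connectable by a chain of switches that stay geometric throughout, near a fixed $d\in\Omega_\T\cap\Omega_{\T'}$ I would argue directly: each geodesic edge of $\T'$ meets the $1$-skeleton of $\T$ in a combinatorial pattern that is locally constant as the metric varies, and its length is then a composition of finitely many hyperbolic trigonometric formulas in the $\T$-edge lengths, hence real analytic.

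The step I expect to be most delicate is this last one, namely showing that the combinatorial intersection pattern between the geodesic realizations of $\T$ and $\T'$ is locally constant near $d$. This requires the openness of the strict triangle inequalities together with ruling out degenerate coincidences where a $\T'$-edge becomes tangent to, or passes through, a vertex of $\T$. Once the stability is established, real analyticity on each stable stratum is automatic from the explicit hyperbolic trigonometric formulas, and the compatibility of charts then upgrades $T_{hp}(S,V)$ to a real analytic manifold.
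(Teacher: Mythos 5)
Your proposal follows essentially the same skeleton as the paper: charts $\Phi_\T$ on the convex set of length vectors satisfying the triangle inequalities, coverage of $T_{hp}(S,V)$ by the $P(\T)$'s via Delaunay triangulations, and real analyticity of transitions via the hyperbolic cosine law for a single diagonal switch. Where you genuinely depart from the paper is in the treatment of overlaps $P(\T)\cap P(\T')$ when $\T$ and $\T'$ differ by more than one switch. The paper simply invokes the combinatorial fact that any two triangulations of $(S,V)$ are connected by a chain of diagonal switches and declares that "it suffices" to handle a single switch; but, as you correctly observe, the intermediate triangulations in such a chain need not be geodesic for the metric $d$ under consideration (indeed a switch may not even produce a convex quadrilateral), so the naive composition of single-switch transition maps is not obviously defined on all of the overlap. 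Your alternative --- fixing $d$, tracking the intersection pattern of the geodesic $\T'$-edges with the $\T$-skeleton, and expressing each $\T'$-edge length as a finite sum of hyperbolic-trigonometric expressions in the $\T$-lengths --- is a sound way to sidestep this, and it localizes the problem correctly. Its cost, which you honestly flag, is the stability analysis: you must rule out a $\T'$-edge passing through a $\T$-vertex near $d$, and if such coincidences do occur on a lower-dimensional locus you still need the piecewise-analytic formulas to glue analytically across it (continuity alone is not enough). That subtlety is genuinely delicate and is not resolved in your sketch, but it is no worse than the gap the paper leaves implicit; both arguments, to be fully rigorous, would need roughly the same amount of additional care. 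In short: same chart structure and the same use of hyperbolic trigonometry, but you identify and attempt to repair a step the paper passes over, at the price of a harder local-stability argument.
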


\begin{proof}
Suppose $\mathcal{T}$ is a triangulation of $(S, V)$ with the set
of edges $E=E(\mathcal{T})$. Let

\begin{align*}
& \R^{E(\T)}_{\Delta} =\{ x \in \R_{>0}^E | \text{$\forall$
triangle $t$ in $\T$ with edges $e_i, e_j, e_k$,} \quad
 x(e_i)+x(e_j) > x(e_k)\}
\end{align*}
be the convex polytope in $\R^E$.  For each $x \in
\R^{E(\T)}_{\Delta}$, one constructs a hyperbolic polyhedral
metric $d_x$ on $(S, V)$ by replacing each triangle $t$ of edges
$e_i, e_j, e_k$ by a hyperbolic triangle of edge lengths $x(e_i),
x(e_j), x(e_k)$ and gluing them by isometries along the
corresponding edges. This construction produces an injective map
(the length coordinate associated to $\T$)
$$ \Phi_{\T}: \R^{E(\T)}_{\Delta} \to T_{hp}(S, V)$$ sending $x$ to  $[d_x]$.
The image $P(\T) :=\Phi_{\T}(\R_{\Delta}^{E(\T)})$ is the space of
all hyperbolic polyhedral metrics $[d]$ on $(S, V)$ for which $\T$
is isotopic to a geodesic triangulation in $d$. We call $x$  the
\it length coordinate \rm of $d_x$ and $[d_x]=\Phi_{\T}(x)$ (with
respect to $\T$). In general $P(\T) \neq T_{hp}(S, V)$ (see \S2.1
in \cite{glsw}).

Since each hyperbolic polyhedral metric on $(S,V)$ admits a
geometric triangulation (for instance its Delaunay triangulation),
we see that $T_{hp}(S, V) =\cup_{\T} P(\T)$ where the union is
over all triangulations of $(S,V)$. The space $T_{hp}(S, V)$ is a
real analytic manifold with real analytic coordinate charts $\{
(P(\T), \Phi_{\T}^{-1}) | \T$ triangulations of $(S,V)$\}. To see
transition functions $\Phi_{\T}^{-1}\Phi_{\T'}$ are real analytic,
note that  any two triangulations of $(S,V)$ are related by a
sequence of diagonal switches. Therefore, it suffices to show the
result for $\T$ and $\T'$ which are related  by a diagonal switch
along an edge $e$. In this case, the transition function
$\Phi_{\T}^{-1}\Phi_{\T'}$ sends $(x_0,x_1, ...., x_m)$ to
$(f(x_0, ..., x_m), x_1, ..., x_m)$ where $x_0$ is the length of
$e$ and $f$ is the length of the diagonal switched edge. Let $t,
t'$ be the triangles adjacent to $e$ so that the lengths of edges
of $t, t'$ are \{$x_0, x_1, x_2$\} and \{$x_0, x_3, x_4$\}. Using
the cosine law, we see that $f$ is a real analytic function of
$x_0, ..., x_4$.
\end{proof}

\subsection{Delaunay triangulations and marked quadrilaterals}

Each hyperbolic triangle $t$ in $\mathbf H^2$ has a circumcircle
which is the curve of constant geodesic curvature containing the
three vertices of $t$. When the circumcircle is compact, it is a
hyperbolic circle. When it is not compact, it is either a
horocycle or a curve of constant distance to a geodesic.  We call
the convex region bounded by the circumcircle the \it circum-ball
\rm of the triangle $t$.  A \it marked quadrilateral \rm $Q$ is a
hyperbolic quadrilateral together with a diagonal $e$ inside $Q$.
It is the same as a union of two hyperbolic triangles $t, t'$
along a common edge $e$, i.e., $Q =t \cup_e t'$. A hyperbolic
polygon  is called \it cyclic \rm if its vertices lie in a curve
of constant geodesic curvature in the hyperbolic plane. A marked
quadrilateral $t \cup_e t'$ is cyclic if and only if the two
circumecircles for $t$ and $t'$ coincide.

 A geodesic triangulation $\T$ of a
hyperbolic polyhedral surface $(S,V,d)$ is said to be \it Delaunay
\rm if for each edge $e$ adjacent to two hyperbolic triangles $t$
and $t'$, the interior of the circumball of $t$ does not contain
the vertices of $t'$ when the quadrilateral $t \cup_e t'$ is
lifted to $\mathbf H^2$. The last condition is sometimes called
the \it empty ball condition\rm.  We will call the marked
quadrilateral $t\cup_e  t'$ the \it quadrilateral associated to
the edge \rm $e$.  G. Leibon \cite{Leibon} gave a very nice
algebraic description of empty-ball condition in terms of the
inner angles.

\begin{lemma}[Leibon]\label{thm:Leibon} A geodesic triangulation $\T$ is
 Delaunay  if
and only if
\begin{equation}\label{eq:leibon} \alpha+\alpha'\leq \beta+\beta'+\gamma+\gamma' \end{equation}
for each edge $e$, where $\alpha, \beta, \gamma, \alpha', \beta',
\gamma'$ are angles of the two triangles in $\T$ having $e$ as the
 common edge so that $\alpha$ and $\alpha'$ are opposite to $e$.
 Furthermore, the equality holds for $e$ if and only if the marked
 quadrilateral associated to $e$ is cyclic.
\end{lemma}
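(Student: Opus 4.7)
The plan is to lift the marked quadrilateral $Q = t \cup_e t'$ isometrically to $\mathbf{H}^2$ and label its vertices $A, B, C, D$ so that $e = BD$, $\alpha = \angle BAD$, $\alpha' = \angle BCD$, while $\beta, \beta'$ are the two angles at $B$ and $\gamma, \gamma'$ the two angles at $D$. Write $\Omega$ for the circumcircle of $\triangle ABD$ — a hyperbolic circle, horocycle, or equidistant curve. The empty-ball condition becomes the planar statement that $C$ lies on or outside $\Omega$. Fixing $A, B, D$, I would regard $\alpha', \beta', \gamma'$ as smooth functions of $C$ as $C$ varies in the open half-plane of $\mathbf{H}^2$ opposite $A$ across the line $BD$, and set
\[ F(C) := (\beta + \beta' + \gamma + \gamma') - (\alpha + \alpha'). \]
The lemma then reduces to proving that $F(C) \geq 0$ iff $C$ lies on or outside $\Omega$, with equality iff $C \in \Omega$.

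For the equality case I would prove a uniform version of the hyperbolic inscribed-angle theorem: for any curve $\Omega$ of constant geodesic curvature through $B, D$ and any $A, C$ on opposite arcs of $\Omega$, one has $\alpha + \alpha' = \beta + \beta' + \gamma + \gamma'$. A clean route is to apply the Gauss--Bonnet theorem to the two circular segments of $\Omega$ cut off by the chord $BD$, obtaining an expression for $\alpha + \alpha'$ purely in terms of $|BD|$ and the geodesic curvature of $\Omega$; then use the angle-sum identities $\alpha + \beta + \gamma = \pi - \mathrm{Area}(t)$ and $\alpha' + \beta' + \gamma' = \pi - \mathrm{Area}(t')$ to show the matching expression for $\beta + \beta' + \gamma + \gamma'$ agrees, yielding $F(C) = 0$ on $\Omega$. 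An alternative is to use Kubota's hyperbolic Ptolemy identity (imported in Section~3 of the paper) to characterize cyclic quadrilaterals via edge lengths, and then convert to angles via the hyperbolic cosine law.

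The main obstacle is establishing strict monotonicity of $F$ as $C$ crosses $\Omega$. I would take a smooth path $C(s)$ crossing $\Omega$ transversally at $s = 0$ and compute $F'(0)$ by differentiating the angles at $B$ and $D$ in the triangle $\triangle BCD$ via the hyperbolic cosine law (equivalently, by a Schl\"afli-type variation formula), together with the relation $\alpha' = \pi - \mathrm{Area}(t') - \beta' - \gamma'$. The computation should show $F'(0)$ has the same sign as the outward normal component of $\dot C(0)$ relative to $\Omega$, giving $F > 0$ strictly outside and $F < 0$ strictly inside. The delicate point is uniform treatment of the three possible types of $\Omega$ (compact circle, horocycle, equidistant): passing to the Klein projective model of $\mathbf{H}^2$, where each such $\Omega$ becomes the intersection of $\mathbf{H}^2$ with a projective conic, should make the three cases formally identical and render the empty-ball/angle correspondence a single projective statement, circumventing the compactness worries flagged in the introduction.
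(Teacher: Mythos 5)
The paper does not prove this lemma; it cites it verbatim from Leibon~\cite{Leibon}. Your attempt is therefore supplying an argument the paper leaves to the reference, and two things in it need repair before it would constitute a proof.

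First, the Gauss--Bonnet step is mis-stated. You claim that Gauss--Bonnet applied to the two segments of $\Omega$ cut off by $BD$ gives an expression for $\alpha + \alpha'$ depending only on $|BD|$ and the geodesic curvature of $\Omega$. That is a Euclidean fact (there $\alpha+\alpha'=\pi$), not a hyperbolic one: for a cyclic hyperbolic quadrilateral $Q$ one has $\alpha + \alpha' = \pi - \tfrac12\,\mathrm{Area}(Q)$, which changes as $A$ and $C$ slide along $\Omega$, so it cannot be determined by $|BD|$ and the curvature alone. The quantity whose constancy you actually need is the combination $\beta' + \gamma' - \alpha'$; as $C$ moves along the arc of $\Omega$ opposite $A$ this equals $\pi - 2\psi$, where $\psi$ is the tangent--chord angle of $\Omega$ at $B$ measured toward $A$. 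That invariance does follow from Gauss--Bonnet on the segments attached to $\triangle BCD$ together with the tangent-line relation $\psi_{CB}+\alpha'+\psi_{CD}=\pi$ at $C$, and the matching identity $\beta+\gamma-\alpha = -(\pi-2\psi)$ on the $A$-side then gives $F\equiv 0$ on $\Omega$. So the right ingredients are in your sketch, but the named invariant is not.

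Second, and more seriously, the strict monotonicity of $F$ across $\Omega$ --- which is the real content of the inequality direction --- is announced but never established. Naming the derivative computation $F'(0)$ along a transversal path is not the same as performing it and reading off its sign. The appeal to the Klein model is also not load-bearing: the Klein model straightens geodesics, not arbitrary constant-curvature curves, so $\Omega$ becomes a conic but inscribed angles are not projective invariants, and the circle/horocycle/equidistant trichotomy does not simply disappear. As it stands the proposal contains one incorrect intermediate claim and leaves the core monotonicity step unproved.
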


The inequality (\ref{eq:leibon}) can be expressed in terms of the
edge lengths as follows.

\begin{prop}\label{thm:delaunay condition}
A geodesic triangulation $\T$ is  Delaunay  if and only if
\begin{equation}\label{eq:algdel}
\frac{\sinh^2(x_1/2)+\sinh^2(x_2/2)-\sinh^2(x_0/2)}{\sinh(x_1/2)\sinh(x_2/2)}
+\frac{\sinh^2(x_3/2)+\sinh^2(x_4/2)-\sinh^2(x_0/2)}{\sinh(x_3/2)\sinh(x_4/2)}\geq
0
\end{equation}
for each edge $e$ adjacent two triangles $t,t'$ of edge lengths
$x_0,x_1,x_2$ and $x_0,x_3,x_4$ respectively. Furthermore, the
equality holds for an edge $e$ if and only if $t\cup_e t'$ is
cyclic.
\end{prop}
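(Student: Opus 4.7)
The plan is to deduce this proposition from Lemma~\ref{thm:Leibon} by converting Leibon's angle inequality (\ref{eq:leibon}) into the edge-length inequality (\ref{eq:algdel}) via the hyperbolic law of cosines.

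First I would establish a key algebraic identity for a single hyperbolic triangle with edges $x_0, x_1, x_2$ and angle $\alpha$ opposite $x_0$. Write $s_i = \sinh(x_i/2)$ and $c_i = \cosh(x_i/2)$. Substituting $\cosh x_i = 1 + 2 s_i^2$ and $\sinh x_i = 2 s_i c_i$ into the hyperbolic law of cosines $\cosh x_0 = \cosh x_1 \cosh x_2 - \sinh x_1 \sinh x_2 \cos\alpha$ and simplifying yields
\[
\frac{\sinh^2(x_1/2) + \sinh^2(x_2/2) - \sinh^2(x_0/2)}{\sinh(x_1/2)\sinh(x_2/2)} \;=\; 2 c_1 c_2 \cos\alpha \,-\, 2 s_1 s_2.
\]
The analogous identity holds for the triangle $t'$, so (\ref{eq:algdel}) is equivalent to
\[
c_1 c_2 \cos\alpha + c_3 c_4 \cos\alpha' \;\ge\; s_1 s_2 + s_3 s_4,
\]
with the equality case of (\ref{eq:algdel}) corresponding precisely to equality here.

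It then remains to show that this rearranged inequality is equivalent to Leibon's inequality $\alpha + \alpha' \le \beta + \beta' + \gamma + \gamma'$. Using the hyperbolic angle-defect identity $\alpha + \beta + \gamma = \pi - \mathrm{Area}(t)$ for each of $t, t'$, Leibon's condition rearranges to $\alpha + \alpha' + \tfrac{1}{2}(\mathrm{Area}(t) + \mathrm{Area}(t')) \le \pi$. A direct approach is a trigonometric verification using the half-angle identities
\[
\sin^2(\alpha/2) = \frac{\sinh(s-x_1)\sinh(s-x_2)}{\sinh x_1 \sinh x_2}, \qquad 2s = x_0+x_1+x_2,
\]
together with the analogous formula for $\cos(\alpha/2)$. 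A conceptually cleaner route uses Kubota's hyperbolic Ptolemy identity (cited in the introduction): four points $A,B,C,D \in \mathbf H^2$ are concyclic if and only if
\[
\sinh(AC/2)\sinh(BD/2) = \sinh(AB/2)\sinh(CD/2) + \sinh(AD/2)\sinh(BC/2).
\]
Applied to the marked quadrilateral $t \cup_e t'$ with diagonal $e$ (and the second diagonal $BD$ expressed in the edge lengths by two applications of the law of cosines), Kubota's equality translates into the equality case of (\ref{eq:algdel}).

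The main obstacle I expect is precisely this last equivalence, which mixes angles, areas, and edge lengths. I would handle it in two stages: first, verify that both conditions characterize the cyclic locus (via Leibon's Lemma on the angular side and Kubota's identity on the algebraic side), so the equality cases agree; second, match the inequality directions by a continuity/monotonicity argument, using the Euclidean limit $x_i \to 0$ in which both conditions degenerate to the classical Delaunay inequality $\alpha + \alpha' \le \pi$ and in which the signs are unambiguous.
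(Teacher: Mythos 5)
Your opening algebraic identity is correct: expanding the hyperbolic law of cosines with $\cosh x_i = 1+2s_i^2$ and $\sinh x_i = 2s_ic_i$ does give
\[
\frac{s_1^2+s_2^2-s_0^2}{s_1 s_2} \;=\; 2\bigl(c_1c_2\cos\alpha - s_1s_2\bigr),
\]
and reduces (\ref{eq:algdel}) to $c_1c_2\cos\alpha + c_3c_4\cos\alpha' \ge s_1s_2+s_3s_4$. However, this identity isolates $\cos\alpha$, whereas Leibon's condition (\ref{eq:leibon}) is a statement about the \emph{combination} $(\beta+\gamma-\alpha)+(\beta'+\gamma'-\alpha')$. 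You correctly sense a gap here, but the way you propose to close it — characterizing the equality locus via Kubota and then matching signs by a continuity argument with the Euclidean limit $x_i\to 0$ — is only sketched, and the sign-matching step is genuinely delicate: you would need to know that the equality hypersurface separates the admissible parameter domain into components on which \emph{both} quantities have constant sign, and then find a test point in \emph{each} such component, not just at a degenerate boundary. None of this is carried out.

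The missing idea is the paper's Lemma~\ref{thm:relation}, which proves the sharper single-triangle identity
\[
2\sin\tfrac{a_2+a_3-a_1}{2}\,\cosh\tfrac{x_1}{2} \;=\; \frac{s_2^2+s_3^2-s_1^2}{s_2 s_3}.
\]
This produces exactly the factor $\sin\frac{\beta+\gamma-\alpha}{2}$ that Leibon's inequality is about. Applying it to both triangles across $e$, the common positive factor $\cosh(x_0/2)$ cancels, so (\ref{eq:algdel}) becomes $\sin\frac{\beta+\gamma-\alpha}{2}+\sin\frac{\beta'+\gamma'-\alpha'}{2}\ge 0$; and since both half-sums lie in $(-\pi/2,\pi/2)$, the sum-to-product formula $\sin A+\sin B = 2\sin\frac{A+B}{2}\cos\frac{A-B}{2}$ with $\cos\frac{A-B}{2}>0$ shows this is equivalent to $A+B\ge 0$, i.e.\ to (\ref{eq:leibon}), equality cases included. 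So the decisive step is not the law-of-cosines substitution you perform (which is simpler but lands on the wrong angle quantity), but the more refined Lemma~\ref{thm:relation} which targets $\beta+\gamma-\alpha$ directly. Without it — or a complete substitute — your argument stops short of a proof.
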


\begin{proof}
We begin with

\begin{lemma}\label{thm:relation} Let $x_1,x_2,x_3$ be side lengths of a hyperbolic triangle and
$a_1,a_2,a_3$ be the opposite angles so that $a_i$ is facing the
edge of length $x_i$. Then
$$2\sin\frac{a_2+a_3-a_1}2 \cdot \cosh\frac{x_1}2= \frac{\sinh^2(x_2/2)+\sinh^2(x_3/2)-\sinh^2(x_1/2)}
{\sinh(x_2/2)\sinh(x_3/2)}.$$
\end{lemma}

\begin{proof} By the cosine law expressing $x_i$ in terms of $a_1,
a_2, a_3$, we have
\begin{align*}
&\sinh^2(x_2/2)+\sinh^2(x_3/2)-\sinh^2(x_1/2)\\
&=\frac12( \cosh(x_2)+\cosh(x_3)-\cosh(x_1)-1 )\\
&=\frac12 [ \frac{\cos a_2+\cos a_1 \cos a_3}{\sin a_1 \sin a_3}+
\frac{\cos a_3+\cos a_1 \cos a_2}{\sin a_1 \sin a_2} -
                      \frac{\cos a_1+\cos a_2 \cos a_3}{\sin a_2 \sin a_3} - 1 ] \\
&=\frac1{2\sin a_1 \sin a_2 \sin a_3}  (\sin(a_2+a_3)-\sin a_1)(\cos a_1 +\cos (a_2-a_3))\\
&= \frac{2\sin\frac{a_2+a_3-a_1}2 \cos\frac{a_1+a_2+a_3}2
\cos\frac{a_1+a_2-a_3}2 \cos\frac{a_1-a_2+a_3}2}{\sin a_1 \sin a_2
\sin a_3}.
\end{align*}

On the other hand,
\begin{align*}
\sinh^2(x_i/2)&=\frac12( \cosh x_i -1) \\
&=\frac12 (\frac{\cos a_i+\cos a_j \cos a_k}{\sin a_j \sin a_k}-1) \\
&=\frac12 \frac{\cos a_i+ \cos(a_j+a_k)}{\sin a_j \sin a_k}\\
&=\frac{\cos\frac{a_i+a_j+a_k}2 \cos\frac{a_i-a_j-a_k}2}{\sin a_j \sin a_k}.
\end{align*}

Therefore
\begin{align*}
&\frac{\sinh^2(x_2/2)+\sinh^2(x_3/2)-\sinh^2(x_1/2)}{\sinh(x_2/2)\sinh(x_3/2)}\\
&= \frac{2\sin\frac{a_2+a_3-a_1}2 \cos\frac{a_1+a_2+a_3}2 \cos\frac{a_1+a_2-a_3}2 \cos\frac{a_1-a_2+a_3}2}
{\sin a_1 \sin a_2 \sin a_3 \sqrt{\frac{\cos\frac{a_1+a_2+a_3}2 \cos\frac{a_2-a_1-a_3}2}{\sin a_1 \sin a_3}}
                            \sqrt{\frac{\cos\frac{a_1+a_2+a_3}2 \cos\frac{a_3-a_1-a_2}2}{\sin a_1 \sin a_2}}} \\
&= 2\sin\frac{a_2+a_3-a_1}2 \cdot \sqrt{\frac{\cos\frac{a_1+a_2-a_3}2 \cos\frac{a_1-a_2+a_3}2}{\sin a_2 \sin a_3}}     \\
&= 2\sin\frac{a_2+a_3-a_1}2 \cdot \cosh\frac{x_1}2.
\end{align*}

In the last step above,  we have used \begin{align*}
(\cosh\frac{x_1}2)^2&=\frac12( \cosh x_1 +1) \\
&=\frac12 (\frac{\cos a_1+\cos a_2 \cos a_3}{\sin a_2 \sin a_3}+1) \\
&=\frac12 \frac{\cos a_1+ \cos(a_2-a_3)}{\sin a_2 \sin a_3}\\
&=\frac{\cos\frac{a_1+a_2-a_3}2 \cos\frac{a_1-a_2+a_3}2}{\sin a_2 \sin a_3}.
\end{align*}

\end{proof}


Now (\ref{eq:leibon}) is equivalent to
$\sin\frac{\beta+\gamma-\alpha}2+\sin\frac{\beta'+\gamma'-\alpha'}2\geq
0.$ By Lemma \ref{thm:relation} applied to triangles of lengths
$\{x_0, x_1, x_2\}$ and $\{x_0, x_3, x_4\}$, we see that Delaunay
is equivalent to (\ref{eq:algdel}).
\end{proof}

\subsection{Delaunay triangulations of compact hyperbolic polyhedral surfaces}


\begin{theorem}\label{9} If $\T$ is a
Delaunay triangulation of a closed hyperbolic polyhedral surface
$(S,V,d)$, then each triangle has a compact circumcircle.
\end{theorem}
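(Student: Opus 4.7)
My plan is to argue by contradiction, by combining the algebraic Delaunay inequality (\ref{eq:algdel}) with the following characterization: a hyperbolic triangle with side lengths $a, b, c$ has a compact circumscribed circle, a horocyclic circumscribed curve, or an equidistant circumscribed curve according as $\sinh(a/2), \sinh(b/2), \sinh(c/2)$ satisfy the strict triangle inequality, equality, or its reverse.

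To prove the characterization I would work in the hyperboloid model $\mathbf H^2 \subset \mathbf R^{2,1}$. The circumscribed curve is the intersection of $\mathbf H^2$ with the affine plane through the three vertices $A, B, C$; its type is compact, horocycle, or equidistant according as the Minkowski normal $v = A \times B + B \times C + C \times A$ to this plane is timelike, lightlike, or spacelike. Using the identity $\langle X \times Y, Z \times W\rangle = \langle X, W\rangle \langle Y, Z\rangle - \langle X, Z\rangle \langle Y, W\rangle$ together with $\langle A, B\rangle = -\cosh c$ and cyclic analogs, a direct computation yields
\[
\langle v, v\rangle = U^2 + V^2 + W^2 - 2(UV + VW + WU), \quad U = 2\sinh^2(a/2),\ V = 2\sinh^2(b/2),\ W = 2\sinh^2(c/2).
\]
Substituting $P = \sqrt U = \sqrt{2}\,\sinh(a/2)$ and likewise $Q, R$, this equals $-(P+Q+R)(-P+Q+R)(P-Q+R)(P+Q-R)$ by Heron's identity, which is negative (timelike $v$, compact circle) precisely when $P, Q, R$ — equivalently $\sinh(a/2), \sinh(b/2), \sinh(c/2)$ — satisfy the strict triangle inequality.

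For the iteration, suppose for contradiction that some $t \in \T$ has a non-compact circumscribed curve. Then exactly one side $e_0$ of $t$ (the \emph{dominant} side, of length $\ell_0$) satisfies $\sinh(\ell_0/2) \geq \sinh(\ell_1/2) + \sinh(\ell_2/2)$, where $\ell_1, \ell_2$ are the other two side lengths of $t$. Substituting into (\ref{eq:algdel}) at $e_0$, the first summand is at most $-2$, so by Delaunay the second summand is at least $2$; this yields $|\sinh(\ell_3/2) - \sinh(\ell_4/2)| \geq \sinh(\ell_0/2)$, where $\ell_3, \ell_4$ are the other two side lengths of the triangle $t'$ adjacent to $t$ across $e_0$. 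After relabeling, $\sinh(\ell_4/2) \geq \sinh(\ell_0/2) + \sinh(\ell_3/2) > \sinh(\ell_0/2)$, so $t'$ also has a non-compact circumscribed curve with dominant edge $e_4$ strictly larger than $e_0$ in the $\sinh(\cdot/2)$ sense. Iterating produces an infinite sequence of distinct edges in $E(\T)$ whose half-length sinh values are strictly increasing, contradicting the finiteness of $E(\T)$. The main obstacle is the characterization in the first step: the hyperboloid computation is elementary but demands careful tracking of Minkowski signs, and its identification with Heron's formula is the key coincidence that lines up exactly with the algebraic form of the Delaunay inequality (\ref{eq:algdel}).
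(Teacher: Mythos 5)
Your argument is correct, and its overall logical skeleton is the same as the paper's: Delaunay $\Rightarrow$ the algebraic inequality (\ref{eq:algdel}) at every edge $\Rightarrow$ the strict triangle inequality for the $\sinh(\cdot/2)$ of the edge lengths in every triangle $\Rightarrow$ compact circumcircle. The difference is that you make the middle step and the last step self-contained where the paper outsources them. For the last step, the paper simply cites Fenchel (Proposition~\ref{thm:cycle}), whereas you reprove the trichotomy in the hyperboloid model via the sign of $\langle v,v\rangle$ for the plane normal $v=A\times B+B\times C+C\times A$, and the computation does reduce to the Heron-type factorization as you say. For the middle step, the paper invokes Lemma~\ref{thm:deltri} (Lemma~4.2 of \cite{glsw}) as a black box; your explicit descent --- if a triangle has a dominant edge $e_0$ with $s_0\ge s_1+s_2$, then the first summand in (\ref{eq:algdel}) is $\le -2$, forcing the second to be $\ge 2$, hence $|s_3-s_4|\ge s_0$ and so the neighboring triangle across $e_0$ has a strictly $\sinh$-larger dominant edge, contradicting finiteness of $E(\T)$ --- is precisely the natural proof of that lemma specialized to $y=\sinh(x/2)$. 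One small point worth noting for completeness is that the dominant edge of $t$ is unique (two edges of a triangle cannot both dominate), and that the new dominant edge $e_4$ really is distinct from $e_0,e_1,e_2$ since $s_4> s_0\ge s_1+s_2>\max(s_1,s_2)$, so the descent genuinely advances; you implicitly use this but it is worth stating. In short: correct, same route, but with the two cited ingredients unpacked, which makes the proof self-contained at the price of a longer hyperboloid calculation.
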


\begin{proof}
By Proposition \ref{thm:delaunay condition} for Delaunay
triangulations inequality (\ref{eq:algdel}) holds. On the other
hand, by lemma 4.2 of \cite{glsw},

\begin{lemma}[\cite{glsw}] \label{thm:deltri}
Suppose $y: E(\T) \to \R_{>0}$ is a function satisfying for each
edge $e_0$ adjacent to two triangles $t,t'$ of edges $e_0,e_1,
e_2$ and $e_0, e_3, e_4$
$$ \frac{ y_1^2+y_2^2-y_0^2}{y_1y_2} +
\frac{y_3^2+y_4^2-y_0^2}{y_3y_4} \geq 0$$ where $y_i =y(e_i)$.
Then $y(e_i) + y(e_j) > y(e_k)$
whenever $e_i, e_j, e_k$ form edges of a triangle in $\T$.
\end{lemma}

Taking $y(e) =\sinh(\frac{x(e)}2)$ in the above lemma and using
(\ref{eq:algdel}), we obtain
\begin{equation}\label{trig}\sinh(\frac{x(e_i)}2) +\sinh(\frac{x(e_j)}2) > \sinh(\frac{x(e_k)}2). \end{equation}

Now  theorem \ref{9} follows from (\ref{trig}) and a result in
\cite{Fenchel} page 118,

\begin{prop}[Fenchel]\label{thm:cycle}
Let $C$ be the circumcircle of a hyperbolic triangle of edge
lengths $x_i, x_j, x_k$.  Then
 $C$ is a (compact) hyperbolic circle  if and only if
$\sinh(\frac{x_i}2)+\sinh(\frac{x_j}2) > \sinh(\frac{x_k}2)$.



\end{prop} \end{proof}
Since $\sinh(a+b) > \sinh(a)+\sinh(b)$ for $a,b>0$, by
(\ref{trig}), we obtain
\begin{equation}\label{trieq} x(e_i)+x(e_j)> x(e_k), \end{equation}
whenever $e_i, e_j, e_k$ form edges of a triangle. This implies,
\begin{corollary}\label{12} Suppose $x: E(\T) \to \R_{>0}$ is a function so
that (\ref{eq:algdel}) holds at each edge. Then $x$ is the edge
length function (in $\T$) of a hyperbolic polyhedral metric on
$(S, V)$.

\end{corollary}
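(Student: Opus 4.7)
The plan is to show that the function $x$ satisfies the triangle inequality on every face of $\T$, which is exactly the condition needed to realize $x$ as the edge length function of a hyperbolic polyhedral metric via the length coordinate map $\Phi_{\T}$ from the earlier lemma.

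First, I would invoke the chain of implications already assembled in the excerpt. From the hypothesis that (\ref{eq:algdel}) holds at every edge, Lemma \ref{thm:deltri} applied to $y(e) = \sinh(x(e)/2)$ gives inequality (\ref{trig}), namely $\sinh(x(e_i)/2) + \sinh(x(e_j)/2) > \sinh(x(e_k)/2)$ for any three edges $e_i, e_j, e_k$ of a triangle in $\T$. The elementary inequality $\sinh(a+b) > \sinh(a) + \sinh(b)$ for $a, b > 0$ (which follows from expanding $\sinh(a+b) = \sinh(a)\cosh(b) + \cosh(a)\sinh(b)$ and using $\cosh \ge 1$, strictly at positive arguments) then yields (\ref{trieq}): $x(e_i) + x(e_j) > x(e_k)$.

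Next, with the triangle inequality in hand, for each face $t$ of $\T$ with edge-length data $x(e_i), x(e_j), x(e_k)$ I would realize $t$ as a genuine hyperbolic triangle. This is standard: any triple of positive reals satisfying the strict triangle inequality is the side-length triple of a hyperbolic triangle, unique up to isometry (the hyperbolic law of cosines determines the angles from the lengths and yields valid angles in $(0, \pi)$). This places $x$ inside the polytope $\R^{E(\T)}_{\Delta}$, so $d_x := \Phi_{\T}(x)$ is well defined as a hyperbolic polyhedral metric on $(S,V)$ by the length-coordinate construction already recorded in the proof that $T_{hp}(S,V)$ is a real analytic manifold.

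Finally, I would observe that $d_x$ is a hyperbolic polyhedral metric whose cone points lie in $V$: away from vertices of $\T$ the metric is locally isometric to $\mathbf{H}^2$ (across edges because the two adjacent hyperbolic triangles are glued by an isometry of matching edges, and in the interior of triangles by construction), and any angle excess or deficit can only occur at vertices of $\T$, i.e., at points of $V$. Thus $\T$ is itself a geodesic triangulation of $(S,V,d_x)$ with $x$ as its edge-length function, completing the claim. There is no real obstacle here; the corollary is essentially a bookkeeping consequence of (\ref{trig}), the superadditivity of $\sinh$ on $\R_{>0}$, and the defining property of $\Phi_{\T}$.
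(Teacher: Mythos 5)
Your proposal is correct and follows the paper's own route exactly: from inequality (\ref{eq:algdel}) apply Lemma \ref{thm:deltri} with $y(e)=\sinh(x(e)/2)$ to obtain (\ref{trig}), then use superadditivity of $\sinh$ on $\R_{>0}$ to get the triangle inequality (\ref{trieq}), placing $x$ in $\R^{E(\T)}_{\Delta}$ so that $\Phi_{\T}(x)$ is the desired hyperbolic polyhedral metric. Nothing is missing.
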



It is highly likely that theorem \ref{9} still holds for
hyperbolic cone metrics on high dimensional compact manifolds,
i.e., empty-ball condition implies compact circumsphere. The work
of \cite{De} shows that it holds for decorated finite volume
hyperbolic metrics of any dimension. 

The classical way of constructing many Delaunay triangulations of
a polyhedral metric
 $d$ on $(S, V)$ is as follows. See for instance \cite{bs}. Define the {\it
Voronoi decomposition} of $(S,V,d)$ to be the collection of
2-cells $\{R(v)| v\in V\}$ where $R(v)=\{x\in S | d(x,v)\leq
d(x,v')$ for all $v'\in V\}$. Its dual is called a {\it Delaunay
tessellation} $\mathcal{C}(d)$ of $(S,V,d)$. It is a cell
decomposition of $(S,V)$ with vertices $V$ and two vertices $v,
v'$ jointed by an edge if and only if $R(v) \cap R(v')$ is
1-dimensional. By definition, each 2-cell in the Delaunay
tessellation is a convex polygon inscribed to a compact circle in
$\mathbf H^2$ whose center is a vertex of the Voronoi
decomposition.
 By further triangulating all non-triangular
2-dimensional cells (without introducing extra vertices) in
$\mathcal{C}(d)$, one obtains a Delaunay triangulation of
$(S,V,d)$.  This Delaunay triangulation has the property that the
circumcircles of triangles are hyperbolic circles (i.e., compact).
Indeed, the centers of the circumcircles are the vertices in the
Voronoi cell decomposition.  Conversely, if $\T$ is a Delaunay
triangulation with compact circumcircles for all triangles, then
it is a triangulation of the Delaunay tessellation.
Combining theorem \ref{9}, we obtain
part (a) of the following,

\begin{prop} \label{134} (a) Suppose  $\T$ is a geodesic triangulation of a
compact hyperbolic polyhedral surface $(S,V,d)$. Then $\T$
satisfies the empty-ball condition if and only if it is a geodesic
triangulation of the Delaunay tessellation.

(b) If $\mathcal{T}$ and $\mathcal{T}'$ are Delaunay
triangulations of a hyperbolic polyhedral metric $d$ on a closed
marked surface $(S,V)$, then there exists a sequence of Delaunay
triangulations $\mathcal{T}_1=\mathcal{T}, \mathcal{T}_2,...,
\mathcal{T}_k=\mathcal{T}'$ of $d$ so that $\mathcal{T}_{i+1}$ is
obtained from $\mathcal{T}_i$ by a diagonal switch.

(c) Suppose $\T$ is a Delaunay triangulation of a compact
hyperbolic polyhedral surface $(S,V,d)$ whose diameter is $D$.
Then the length of each edge $e$ in $\T$ is at most $2D$.  In
particular, there exists an algorithm to find all Delaunay
triangulations of a hyperbolic polyhedral surface.
\end{prop}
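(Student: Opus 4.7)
For part (a), my plan is to combine Theorem \ref{9} with the classical Voronoi/Delaunay construction already sketched in the excerpt. If $\T$ satisfies the empty-ball condition, Theorem \ref{9} gives that every circumcircle is a compact hyperbolic circle; its center is equidistant from the three triangle vertices and, by the empty-ball condition, no vertex of $V$ lies closer. Hence each such center is a vertex of the Voronoi decomposition, and $\T$ is a triangulation of the Delaunay tessellation $\mathcal{C}(d)$. The converse is immediate from the construction: the circumcircle of a 2-cell of $\mathcal{C}(d)$ contains all of its vertices on the boundary and no other vertices inside (by the Voronoi property), so any further triangulation without new vertices inherits the empty-ball condition.

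For part (b), by (a) both $\T$ and $\T'$ are subdivisions of $\mathcal{C}(d)$ obtained by triangulating the non-triangular 2-cells without adding vertices. They therefore agree on every triangular 2-cell of $\mathcal{C}(d)$ and differ only within the polygonal 2-cells. Each such polygonal cell $P$ is cyclic, inscribed in the same circle. The plan is to apply the classical fact that the flip graph of triangulations of a convex polygon is connected, then observe that every diagonal introduced inside $P$ is a chord of the common circumcircle, so the associated marked quadrilateral $t\cup_e t'$ remains cyclic. By the equality case of Lemma \ref{thm:Leibon}, every intermediate triangulation of $P$ satisfies equality in the Delaunay inequality on every new edge; it therefore remains Delaunay globally. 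Performing these flip sequences independently inside each non-triangular cell of $\mathcal{C}(d)$ connects $\T$ to $\T'$ through Delaunay triangulations.

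For part (c), the key observation is that for each triangle $t$ of $\T$, Theorem \ref{9} guarantees a compact circumcircle whose center $O_t$, by the argument of (a), is a Voronoi vertex in $S$. Since $O_t \in S$ and $O_t$ is equidistant from the three vertices of $t$ at distance equal to the circumradius $r_t$, we have $r_t = d(O_t, v) \leq D$ for any vertex $v$ of $t$. Every edge $e$ of $t$ is a chord of this circle, so its hyperbolic length satisfies $x(e) \leq 2 r_t \leq 2D$. For the algorithmic consequence, the plan is to note that in a hyperbolic polyhedral surface there are only finitely many geodesic arcs of length at most $2D$ joining any two points of $V$ (one can enumerate them by lifting to universal cover and using local finiteness of the $V$-orbit in bounded balls); hence only finitely many candidate edge sets, only finitely many candidate triangulations of $(S,V)$, and Proposition \ref{thm:delaunay condition} lets us test each. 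The point I expect to require the most care is justifying that the center $O_t \in \mathbf{H}^2$ actually descends to a point of $S$ with the claimed distance property — i.e., that the developing map from a suitable neighborhood sends a Voronoi vertex to $O_t$ — and confirming the finiteness of length-bounded geodesic arcs between marked points, which is the only nontrivial input to the algorithmic statement.
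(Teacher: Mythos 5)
Your parts (a) and (b) match the paper's argument: (a) is Theorem~\ref{9} combined with the Voronoi/Delaunay discussion immediately preceding the proposition, and (b) is the flip-connectivity of triangulations of the convex cyclic cells of the Delaunay tessellation, which is exactly the fact the paper cites (with the observation that flips inside a cyclic cell preserve the Delaunay inequality by the equality case of Lemma~\ref{thm:Leibon}). For part (c) you take a slightly different route: you bound each edge as a chord of the compact circumcircle of its triangle by $2r_t$ and then argue $r_t = d_S(O_t, v)\le D$, whereas the paper bounds the length of the edge dual to Voronoi cells $R(v),R(v')$ by $\operatorname{diam} R(v)+\operatorname{diam} R(v')\le 2D$ and never mentions circumradii. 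Both give the same $2D$ bound and both rest on the Voronoi structure. Your variant does hinge on identifying the circumradius (an $\mathbf{H}^2$ quantity) with the surface distance $d_S(O_t,v)$, which you correctly single out as the delicate step; it does hold, since the empty-ball condition makes the circumball free of cone points, so the disk of radius $r_t$ around $O_t$ develops isometrically onto a round disk in $\mathbf{H}^2$ with $O_t$ going to the center, but the paper's Voronoi-diameter bound simply avoids having to say this. The algorithmic conclusion — enumerate geodesic arcs of length $\le 2D$ between marked points, form the finitely many candidate triangulations, and test each against the inequality of Proposition~\ref{thm:delaunay condition} — is the same in both.
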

\begin{proof}
Part(b) of the proposition follows from part(a) and the well known
fact that any two geodesic triangulations of the Delaunay
tessellation are related by a sequence of diagonal switches.
Indeed, any two geodesic triangulations of a convex cyclic polygon
are related by a sequence of (geodesic) diagonal switches. See for
instance \cite{bs} for a proof.

To see part (c), if $e$ is an edge dual to two Voronoi cells
$R(v)$ and $R(v')$, then the length of $e$ is at most the sum of
the diameters of $R(v)$ and $R(v')$. However, the diameters of
$R(v)$ and $R(v')$ are bounded by the diameter of the surface $S$.
Thus, the length of $e$ is at most $2D$.   It is well known that
for any constant $C$, there exists an algorithm to list all
geodesic paths in $(S,V,d)$ of lengths at most $C$ joining $V$ to
$V$. Therefore, we can list algorithmically all Delaunay
triangulations of a given polyhedral metric on $(S,V)$.

\end{proof}
Note that if we remove the compactness of the space $S$, then
there are examples of geodesic triangulations with empty-ball
condition which does not come from dual of Voronoi cell. See
\cite{De}.

For a triangulation $\mathcal{T}$ of $(S,V)$, the associated
Delaunay cell in $T_{hp}(S, V)$ is defined to be
\begin{align*}
&D_c(\mathcal{T}) &=\{[d]\in T_{hp}(S, V) | \mathcal{T}\ \mbox{is
isotopic to a Delaunay triangulation of}\ d\}.
\end{align*}

Theorem \ref{9} and corollary \ref{12} show that
$D_c(\mathcal{T})$ is defined by a finite set of real analytic
inequalities (i.e., (\ref{eq:algdel})). On the other hand, Leibon
showed in \cite{Leibon} that $D_c(\mathcal{T})$ is a cell. Putting
these together, one obtains

\begin{theorem}[Hazel\cite{ha}, Leibon\cite{Leibon}] \label{Hazel} There is a real analytic
cell decomposition
$$T_{hp}(S, V)= \cup_{[\mathcal{T}]}D_c(\mathcal{T})$$
invariant under the action of the mapping class group where the
union is over all isotopy classes $[\mathcal{T}]$ of
triangulations of $(S,V)$.
\end{theorem}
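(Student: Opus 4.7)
The plan is to combine the ingredients that have already been assembled: existence of Delaunay triangulations via the Voronoi construction, the semi-analytic description of $D_c(\mathcal T)$ coming from inequality (\ref{eq:algdel}) together with Corollary \ref{12}, and Leibon's theorem that each such piece is a topological cell. The four things to verify are: (i) the cells cover $T_{hp}(S,V)$, (ii) each $D_c(\mathcal T)$ is a real analytic cell of the expected dimension, (iii) the cells meet correctly along faces so as to form a cell decomposition, and (iv) the decomposition is mapping class group equivariant.

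First I would establish that $T_{hp}(S,V)=\bigcup_{[\mathcal T]}D_c(\mathcal T)$. Given any $[d]\in T_{hp}(S,V)$, the Voronoi construction recalled just before the theorem produces a Delaunay tessellation $\mathcal C(d)$, and triangulating its non-triangular faces without adding vertices yields a geodesic Delaunay triangulation $\mathcal T$ of $(S,V,d)$. Hence $[d]\in D_c(\mathcal T)$.

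Second, I would identify each $D_c(\mathcal T)$ with an analytic cell. In the length-coordinate chart $\Phi_\mathcal T:\R_\Delta^{E(\mathcal T)}\to T_{hp}(S,V)$ from the previous lemma, the set $\Phi_\mathcal T^{-1}(D_c(\mathcal T)\cap P(\mathcal T))$ is the locus in $\R^{E(\mathcal T)}$ cut out by the finitely many real analytic inequalities (\ref{eq:algdel}), one per edge. By Corollary \ref{12}, satisfying these inequalities automatically implies the triangle inequalities that define $\R_\Delta^{E(\mathcal T)}$, so the locus is a closed real analytic subset of $\R^{E(\mathcal T)}$ of full dimension, and in particular $D_c(\mathcal T)\subset P(\mathcal T)$. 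Leibon's theorem \cite{Leibon} then asserts that this locus is homeomorphic to a closed Euclidean cell of dimension $|E(\mathcal T)|$, giving the cell structure of $D_c(\mathcal T)$; this is the main nontrivial input and the step I expect to lean on most heavily.

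Third, I would check that the cells fit together as a CW decomposition. If $\mathcal T\neq\mathcal T'$ and a metric $[d]$ lies in the interiors of both $D_c(\mathcal T)$ and $D_c(\mathcal T')$, then all inequalities (\ref{eq:algdel}) are strict for both $\mathcal T$ and $\mathcal T'$, forcing the Delaunay tessellation $\mathcal C(d)$ to consist entirely of triangles and hence to coincide with both $\mathcal T$ and $\mathcal T'$, a contradiction. On a codimension-one face of $D_c(\mathcal T)$, exactly one inequality (\ref{eq:algdel}) becomes an equality, and by Proposition \ref{thm:delaunay condition} (and Lemma \ref{thm:Leibon}) the corresponding marked quadrilateral $t\cup_e t'$ is cyclic; the diagonal switch along $e$ produces a triangulation $\mathcal T'$ whose Delaunay inequality at the switched edge also degenerates to equality, so $D_c(\mathcal T)$ and $D_c(\mathcal T')$ share this face. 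Iterating as in Proposition \ref{134}(b) shows how higher codimension faces are common to the cells of all triangulations of the corresponding (possibly non-triangular) Delaunay tessellation.

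Finally, mapping class group invariance is essentially formal: a self-homeomorphism $\phi:(S,V)\to(S,V)$ acts on metrics by pullback, sends isotopy classes of triangulations to isotopy classes, and preserves Delaunay-ness (since it is defined by the metric via (\ref{eq:algdel})). Hence $\phi_*D_c(\mathcal T)=D_c(\phi(\mathcal T))$, and the decomposition descends to the moduli space.
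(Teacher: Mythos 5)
Your proposal is correct and follows essentially the same route as the paper, which (in the paragraph preceding the theorem) observes that $D_c(\mathcal T)$ is cut out inside $\R^{E(\mathcal T)}$ by the finitely many real analytic inequalities (\ref{eq:algdel}) — using Theorem \ref{9} and Corollary \ref{12} — and then invokes Leibon's theorem that $D_c(\mathcal T)$ is a cell. You have simply spelled out the covering, the face-matching via diagonal switches, and the mapping class group equivariance, all of which the paper leaves implicit.
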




\section{Diffeomorphism between two Teichm\"uller spaces}

One of the main tools used in our proof is the decorated
Teichm\"uller space theory developed by R. Penner \cite{penner}.
See also \cite{be}, \cite{guoluo} and \cite{glsw} for a discussion
of Delaunay triangulations of decorated metrics.

Recall that $S$ is a closed connected surface and $V=\{v_1, ...,
v_n\} \subset S$ and let $\Sigma=S-V$. We assume $n \geq 1$ and
the Euler characteristic $\chi(\Sigma) <0$.  A \it decorated
hyperbolic metric \rm is a complete hyperbolic metric $d$ of
finite area on $\Sigma$ together with a horoball $H_i$ at the i-th
cusp for each $v_i$. The decorated metric will be written as a
pair $(d, w)$ where $w=(w_1, ..., w_n) \in \R^n_{>0}$ so that
$w_i$ is the length of the horocycle $\partial H_i$. The decorated
Teichm\"uller space, denoted by $T_D(\Sigma)$, is the space of all
decorated metrics on $\Sigma$ modulo isometries homotopic to the
identity and preserving decorations.  For a given triangulation
$\T$ of $(S,V)$, let $\Psi_{\T}: \R_{>0}^E \to T_D(\Sigma)$ be the
$\lambda$-length coordinate (see \cite{penner}) and let $D(\T)$ be
the set of all decorated hyperbolic metrics $(d,w)$ in
$T_D(\Sigma)$ so that $\T$ is isotopic to a Delaunay triangulation
of $(d,w)$. See \cite{penner} or \cite{glsw} for details.

Fix a triangulation $\T$ of $(S,V)$, we have two coordinate maps
$\Phi^{-1}_{\T}: P(\T) \to \R^{E(\T)}$ and  $\Psi_{\T}: \R^{E(\T)}
\to T_D(S,V)$. Consider the smooth embedding $A_{\T}: P(\T) \to
T_D(\Sigma)$ defined by $\Psi_{\T} \circ \Theta \circ
\Phi_{\T}^{-1}$, where $\Theta: \R^{E(\T)} \to \R^{E(\T)}$ sends
$(x_0,x_1,x_2,...)$ to
$(\sinh(x_0/2),\sinh(x_1/2),\sinh(x_2/2),...)$, i.e.,
$\Theta(x)(e) =\sinh(x(e)/2)$.

\begin{theorem} For each triangulation $\T$ of $(S,V)$,
$A_{\T}|_{D_c(\T)}$ is a real analytic diffeomorphism  from $D_c(\T)$ onto $D(\T)$.
\end{theorem}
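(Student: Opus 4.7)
The plan is to decompose $A_{\T} = \Psi_{\T} \circ \Theta \circ \Phi_{\T}^{-1}$ into three pieces and verify that each is a real analytic diffeomorphism onto its image, and then identify the two Delaunay loci via the change of variables $\lambda = \sinh(x/2)$.

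First I would observe that $A_{\T}$ is a real analytic embedding of all of $P(\T)$ into $T_D(\Sigma)$. The chart $\Phi_{\T}^{-1}$ is, by the lemma in \S2.2, a real analytic diffeomorphism from $P(\T)$ onto the open convex polytope $\R^{E(\T)}_{\Delta} \subset \R^{E}_{>0}$. The coordinatewise map $t \mapsto \sinh(t/2)$ is a real analytic diffeomorphism of $\R_{>0}$ onto itself, so $\Theta$ is a real analytic diffeomorphism of $\R^{E}_{>0}$ onto itself. Finally, Penner's $\lambda$-length coordinate $\Psi_{\T}:\R^{E}_{>0}\to T_D(\Sigma)$ is a real analytic diffeomorphism onto $T_D(\Sigma)$. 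Composing these, $A_{\T}$ is a real analytic diffeomorphism from $P(\T)$ onto $A_{\T}(P(\T)) \subset T_D(\Sigma)$.

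Next I would check that $A_{\T}$ carries the Delaunay locus $D_c(\T) \subset P(\T)$ bijectively to $D(\T) \subset T_D(\Sigma)$. By Proposition \ref{thm:delaunay condition}, a metric $[d_x] \in P(\T)$ lies in $D_c(\T)$ if and only if the length function $x = \Phi_{\T}^{-1}([d_x])$ satisfies the hyperbolic Delaunay inequality (\ref{eq:algdel}) at every edge. Setting $\lambda_i := \sinh(x_i/2) = \Theta(x)(e_i)$, the inequality (\ref{eq:algdel}) is transformed verbatim into
\begin{equation*}
\frac{\lambda_1^2 + \lambda_2^2 - \lambda_0^2}{\lambda_1 \lambda_2} + \frac{\lambda_3^2 + \lambda_4^2 - \lambda_0^2}{\lambda_3 \lambda_4} \geq 0,
\end{equation*}
which is precisely the $\lambda$-length characterization of the Delaunay condition for decorated hyperbolic metrics; see \cite{penner}, \cite{be}, and \cite{glsw}. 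Hence $\Psi_{\T}(\Theta(x))$ lies in $D(\T)$ if and only if $x$ parametrizes a point of $D_c(\T)$, and therefore $A_{\T}(D_c(\T)) = D(\T)$.

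Combining the two steps, the restriction $A_{\T}|_{D_c(\T)}$ is a real analytic diffeomorphism from $D_c(\T)$ onto $D(\T)$. The only step that requires genuine input beyond the preceding material in \S2 is the identification, in the second paragraph, of the decorated Delaunay condition in $\lambda$-length coordinates with the inequality obtained from (\ref{eq:algdel}) under $\lambda = \sinh(x/2)$; the rest is a routine chaining of the three charts. This coincidence of algebraic forms, which is the essential bridge between hyperbolic polyhedral metrics and Penner's decorated Teichm\"uller theory, is what makes the theorem work and is the main (non-trivial) point to cite carefully.
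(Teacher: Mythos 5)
Your proposal correctly identifies the decomposition $A_{\T}=\Psi_{\T}\circ\Theta\circ\Phi_{\T}^{-1}$ and the algebraic coincidence that the hyperbolic Delaunay inequality (\ref{eq:algdel}) passes verbatim to Penner's decorated Delaunay inequality under $\lambda=\sinh(x/2)$, which is indeed the same bridge the paper uses. However, the final step ``therefore $A_{\T}(D_c(\T))=D(\T)$'' has a genuine gap. The map $A_{\T}$ is a priori only defined on $P(\T)$, whose image under $\Phi_{\T}^{-1}$ is $\R^{E(\T)}_{\Delta}$ (edge-length functions satisfying the triangle inequality at every face), so $A_{\T}(P(\T))=\Psi_{\T}(\Theta(\R^{E(\T)}_{\Delta}))$, which is a proper subset of $T_D(\Sigma)$. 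Your argument establishes $A_{\T}(D_c(\T))=D(\T)\cap A_{\T}(P(\T))$, i.e.\ one inclusion $A_{\T}(D_c(\T))\subseteq D(\T)$ plus the reverse inclusion restricted to the image of $A_{\T}$. To obtain surjectivity onto $D(\T)$ you must show $D(\T)\subseteq A_{\T}(P(\T))$: for every $\lambda\in\Psi_{\T}^{-1}(D(\T))$, the function $x(e)=2\sinh^{-1}(\lambda(e))$ must satisfy the hyperbolic triangle inequality $x(e_i)+x(e_j)>x(e_k)$ at each face so that it actually parametrizes a point of $P(\T)$. On the Penner side there is no triangle inequality constraint on $\lambda$-lengths, so this is not automatic.

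This is exactly what Corollary \ref{12} supplies, and that corollary is not free: it rests on Theorem \ref{9} (every Delaunay triangle on a compact hyperbolic polyhedral surface has a compact circumcircle), which in turn uses the inequality $\sinh(\tfrac{x_i}2)+\sinh(\tfrac{x_j}2)>\sinh(\tfrac{x_k}2)$ coming from Lemma \ref{thm:deltri} together with Fenchel's criterion (Proposition \ref{thm:cycle}), and then the elementary estimate $\sinh(a+b)>\sinh a+\sinh b$ for $a,b>0$. Your write-up should cite Corollary \ref{12} (or reproduce its argument) at the point where you pass from $\lambda\in\Psi_{\T}^{-1}(D(\T))$ back to an element of $\R^{E(\T)}_{\Delta}$; without it the claimed bijectivity of $A_{\T}|_{D_c(\T)}$ onto $D(\T)$ is not established.
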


\begin{proof}
To see that $A_{\T}$ maps $D_c(\T)$ bijectively onto $D(\T)$, it
suffices to show that $\Theta \circ \Phi_{\T}^{-1}(D_c(\T)) =
\Psi_{\T}^{-1}(D(\T))$.

The space $ \Psi_{\T}^{-1}(D(\T))$ can be characterized as
follows. For each edge $e$ in $(S, \T)$ with a decorated
hyperbolic metric $(d,w)$, let $a,a'$ be the two angles facing $e$
and $b,b', c, c'$ be the angles adjacent to the edge $e$.  Then
$\T$ is Delaunay in the metric $(d,w)$ if and only if for each
edge $e \in E(\T)$ (see \cite{penner}, or \cite{guoluo}),
\begin{equation} \label{fml:penner delau}
 a+a' \leq b+b'+c+c'.
\end{equation}

Let $t$ and $t'$ be the triangle adjacent to $e$ and $e, e_1, e_2$
be edges of $t$ and $e, e_3, e_4$ be the edges of $t'$. Let the
$\lambda$-length of $e$ be $\lambda_0$ and the $\lambda$-length of
$e_i$ be $\lambda_i$. Recall the cosine law for decorated ideal
triangles \cite{penner} states that $\alpha =\frac{x}{yz}$ where
$\alpha$ is the angle (i.e., the length of the horocyclic arc) and
$x,y,z$ are the $\lambda$-lengths so that $x$ faces $\alpha$.
Using it, one sees that (\ref{fml:penner delau}) is equivalent to

\begin{equation} \label{fml:penner delau 2}
\frac{\lambda_0}{\lambda_1 \lambda_2}+\frac{\lambda_0}{\lambda_3 \lambda_4}
 \leq
\frac{\lambda_1}{\lambda_0 \lambda_2}+\frac{\lambda_2}{\lambda_0 \lambda_1}+\frac{\lambda_3}{\lambda_0 \lambda_4}+\frac{\lambda_4}{\lambda_0 \lambda_3},
\end{equation}
for each $e \in E(\T)$.

Rearranging terms, we see (\ref{fml:penner delau 2}) is equivalent
to
\begin{equation} \label{fml:penner delau 3}
0 \leq  \frac{\lambda_1 ^2+ \lambda_2^2-\lambda_0^2}{\lambda_1 \lambda_2} +
\frac{\lambda_3^2 + \lambda_4^2 - \lambda_0^2}{\lambda_3 \lambda_4},
\end{equation}
for each $e \in E(\T)$.

Therefore, $$ \Psi_{\T}^{-1}(D(\T))=\{ (\lambda_0,\lambda_1,...,
\lambda_{|E|}) \in \R_{>0}^E | \text{ (\ref{fml:penner delau 3})
holds at each edge $e\in E(\T)$}\}.$$

By Theorem \ref{9} and proposition \ref{thm:delaunay condition},
the characterization of a hyperbolic polyhedral metric $d$ which
is Delaunay in $\T$ in terms of the length coordinate $x
=\Phi_{\T}^{-1}(d)$ is as follows. Take an edge $e \in E(\T)$ and
let $t$ and $t'$ be the triangles adjacent to $e$ so that $e, e_1,
e_2$ are edges of $t$ and $e, e_3, e_4$ are the edge of $t'$.
Suppose the length of $e$ (in $d$) is $x_0$ and the length of
$e_i$ is $x_i$, $i=1,...,4$. Then, by Proposition
\ref{thm:delaunay condition},
\begin{equation}\label{fml:empty circle}
0 \leq
\frac{\sinh^2(x_1/2)+\sinh^2(x_2/2)-\sinh^2(x_0/2)}{\sinh(x_1/2)\sinh(x_2/2)}+\frac{\sinh^2(x_3/2)+
\sinh^2(x_4/2)-\sinh^2(x_0/2)}{\sinh(x_3/2)\sinh(x_4/2)}
\end{equation}
holds for each edge $e \in E(\T)$.

This shows that
\begin{align*}
&\Phi_{\T}^{-1}(D_c(\T))=\{ x \in \R_{>0}^E | \text{
(\ref{fml:empty circle}) holds for  $e \in E$, and
(\ref{fml:triangle inequality}) holds for each triangle}\}
\end{align*}
where
\begin{equation}\label{fml:triangle inequality}
x(e_i)+x(e_j)> x(e_k),  \quad \text{ $e_i, e_j, e_k$ form edges of
a triangle in $\T$}. \end{equation}

Now inequality (\ref{fml:penner delau 3}) is the same as
(\ref{fml:empty circle}) by taking $\lambda_i$ to be
$\sinh(x_i/2)$ for each $i$. This shows $\Theta \circ
\Phi_{\T}^{-1}(D_c(\T)) \subset \Psi_{\T}^{-1}(D(\T))$. On the
other hand, corollary  \ref{12} implies that for each $\lambda \in
\Psi_{\T}^{-1}(D(\T))$ and a triangle of edges $e_i, e_j, e_k$, we
have $x(e_i)+x(e_j)> x(e_k)$ where $x(e)=2\sinh^{-1}(\lambda(e))$,
i.e., condition (\ref{fml:triangle inequality}) is a consequence
of (\ref{fml:empty circle}).
Therefore $\Theta \circ \Phi_{\T}^{-1}(D_c(\T)) =
\Psi_{\T}^{-1}(D(\T))$.

Finally, since  $\Phi_{\T}$, $\Psi_{\T}$ and $\Theta$ are real
analytic diffeomorphisms and $A_{\T}=\Psi_{\T}\circ \Theta  \circ
\Phi_{\T}^{-1}$ and $A^{-1}_{\T} =\Phi_{\T} \circ \Theta^{-1}
\circ \Psi^{-1}_{\T}$, we see that $A_{\T}$ is a real analytic
diffeomorphism.
\end{proof}

\subsection{The Ptolemy identity and diagonal switch}

Let $Q$ be a convex quadrilateral $Q$ in the Euclidean plane
$\mathbf E^2$, or the hyperbolic plane $\mathbf H^2$ or the
2-sphere $\mathbf S^2$ so that its edges are $a,b,a',b'$ counted
cyclically and its diagonals are $c,c'$. We say $Q$ is \it cyclic
\rm if it is circumscribed to a circle in $\mathbf E^2$, or
$\mathbf S^2$, or a curve of constant geodesic curvature in
$\mathbf H^2$. Let $l(e)$ to be the length of an edge $e$.

The classical Ptolemy theorem states that a Euclidean
quadrilateral $Q$ is cyclic if and only if the following holds
$$ l(a)l(a')+l(b)l(b')=l(c)l(c').$$

In the 19-th century, Jean Darboux and Ferdinand Frobenius proved
that a spherical quadrilateral $Q$ is cyclic if and only if
$$ \sin(\frac{l(a)}{2})\sin(\frac{l(a')}{2})+
\sin(\frac{l(b)}{2})\sin(\frac{l(b')}{2})=\sin(\frac{l(c)}{2})\sin(\frac{l(c')}{2}).$$

The hyperbolic case was established by T. Kubota in 1912 \cite{k}.
He proved,

\begin{prop}[Kubota]\label{kubota} A hyperbolic quadrilateral $Q$ is
inscribed to a curve of constant geodesic curvature in $\mathbf
H^2$ if and only if
\begin{equation}\label{h-ptolemy}
\sinh(\frac{l(a)}{2})\sinh(\frac{l(a')}{2})+
\sinh(\frac{l(b)}{2})\sinh(\frac{l(b')}{2})=\sinh(\frac{l(c)}{2})\sinh(\frac{l(c')}{2}).
\end{equation}

\end{prop}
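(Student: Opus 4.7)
The plan is to reduce Kubota's identity to the classical Euclidean Ptolemy theorem by working in the Poincar\'e disk model $\mathbf D \subset \mathbf C$ of $\mathbf H^2$. The starting point is the well-known closed-form expression for the hyperbolic distance in this model,
\[
\cosh d(z,w) \;=\; 1 + \frac{2|z-w|^2}{(1-|z|^2)(1-|w|^2)},
\]
which, combined with $\cosh x - 1 = 2\sinh^2(x/2)$, yields the chord-type formula
\[
\sinh\frac{d(z,w)}{2} \;=\; \frac{|z-w|}{\sqrt{(1-|z|^2)(1-|w|^2)}}.
\]
Thus $\sinh(d/2)$ factors as a Euclidean chord length divided by a product of weights depending only on the individual endpoints, which is exactly the multiplicative structure Ptolemy requires.

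Next I would invoke the classical fact, a consequence of the conformality of the model together with the description of the orientation-preserving hyperbolic isometries as M\"obius transformations preserving $\mathbf D$, that the curves of constant geodesic curvature in $\mathbf H^2$---geodesics, hyperbolic circles, horocycles, and equidistant curves---are precisely the traces in $\mathbf D$ of Euclidean circles and straight lines. Hence four points of $\mathbf H^2$ are inscribed in a common curve of constant geodesic curvature if and only if their Poincar\'e representatives are Euclidean-concyclic, and the cyclic order of the hyperbolic quadrilateral transfers to the cyclic Euclidean order on the representing Euclidean circle.

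Combining the two ingredients, let $z_A, z_B, z_{A'}, z_{B'} \in \mathbf D$ be the Poincar\'e representatives of the vertices of $Q$ in cyclic order and set $w_i = \sqrt{1-|z_i|^2}>0$. Substituting the chord formula into the hyperbolic Ptolemy identity (\ref{h-ptolemy}) and clearing the common positive factor $w_A w_B w_{A'} w_{B'}$, the identity reduces exactly to
\[
|z_A-z_B|\cdot|z_{A'}-z_{B'}| + |z_B-z_{A'}|\cdot|z_{B'}-z_A| \;=\; |z_A-z_{A'}|\cdot|z_B-z_{B'}|,
\]
which is the classical Euclidean Ptolemy equation for the quadrilateral $z_A z_B z_{A'} z_{B'}$. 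By the Euclidean Ptolemy theorem this equation holds, with the stated cyclic labeling, if and only if the four points are Euclidean-concyclic; by the previous paragraph this is equivalent to $Q$ being inscribed in a curve of constant geodesic curvature. Both directions of the biconditional are thus obtained simultaneously.

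The only step requiring genuine care is the identification of the curves of constant geodesic curvature in $\mathbf H^2$ with Euclidean generalized circles in $\mathbf D$, together with the bookkeeping needed to see that the cyclic orders and the assignment of sides versus diagonals transfer faithfully between the hyperbolic and Euclidean pictures. Once these model-theoretic points are settled, the rest of the proof is a one-line algebraic substitution into the Euclidean Ptolemy theorem.
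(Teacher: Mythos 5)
Your argument is correct, and it is a genuinely different route from what the paper offers: the paper gives no proof of Proposition~\ref{kubota} at all, but simply cites Kubota \cite{k} (and, for the appendix's Lemma~\ref{thm:ptolemy}, cites Kubota for $(i)\Rightarrow(ii)$ and Valentine \cite{v} for the converse, while proving $(i)\Rightarrow(iii)$ by a four-case analysis over circle, horocycle, geodesic, and equidistant curve). Your approach instead reduces everything to the Euclidean plane via the Poincar\'e disk model: the chord formula $\sinh\frac{d(z,w)}{2}=\frac{|z-w|}{\sqrt{(1-|z|^2)(1-|w|^2)}}$ factors each $\sinh(\ell/2)$ into a Euclidean chord length times per-endpoint weights; these weights appear as a common factor $w_A w_B w_{A'} w_{B'}$ in every term of \eqref{h-ptolemy}, so the hyperbolic identity is \emph{literally} the Euclidean Ptolemy identity for the representing points; and the standard identification of curves of constant geodesic curvature with generalized Euclidean circles in the disk model, together with preservation of cyclic order under the model homeomorphism, converts ``inscribed in a curve of constant geodesic curvature'' into ``Euclidean-concyclic.'' Both directions then follow simultaneously from the equality case of the Euclidean Ptolemy inequality. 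This is uniform over all four curve types and shorter than a case-by-case trigonometric computation; the trade-off is model-dependence.

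The one point worth making fully explicit, which you flag but should pin down, is that the equality case of the Euclidean Ptolemy inequality is sensitive to cyclic order: $|z_A-z_{A'}|\,|z_B-z_{B'}|=|z_A-z_B|\,|z_{A'}-z_{B'}|+|z_B-z_{A'}|\,|z_{B'}-z_A|$ holds if and only if the four points are concyclic (or collinear) \emph{in the cyclic order} $z_A,z_B,z_{A'},z_{B'}$, i.e.\ with $z_Az_{A'}$ and $z_Bz_{B'}$ as the diagonals. For the ``only if'' direction this order is supplied by the convexity of $Q$ (a convex quadrilateral inscribed in a curve meets it in the cyclic order of its vertices), and for the ``if'' direction the equality case itself delivers exactly that cyclic arrangement, so the correspondence between sides/diagonals of $Q$ and those of the representing Euclidean quadrilateral is consistent in both directions. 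With that observation made explicit, the proof is complete.
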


Penner's Ptolemy identity \cite{penner} also takes the same form.
Namely, if $Q$ is a decorated ideal quadrilateral in $\mathbf H^2$
so that the $\lambda$-lengths of the its edges are $A,B,A', B'$
counted cyclically and its diagonal are $C,C'$, then
\begin{equation}\label{pid} AA'+BB'=CC'.
\end{equation}

The most remarkable feature of these theorems is that all
equations take the same form as $xx'+yy'=zz'$ which we will call
the Ptolemy identity. The Ptolemy identity also plays the key role
for cluster algebras associated to surfaces \cite{fst}.

The relationship between the Ptolemy identity and the diagonal
switch operation on Delaunay triangulations is the following. If
$\T$ and $\T'$ are two Delaunay triangulations of a Euclidean (or
hyperbolic or spherical) polyhedral surface $(S,V,d)$ so that they
are related by a diagonal switch from edge $e$ to edge $e'$, then
the change of the lengths from $l(e)$ and $l(e')$ is governed by
one of the Ptolemy identities listed above.

Casey's generalization of Ptolemy's theorem is another direction
where Ptolemy identity plays a key role. Furthermore, Casey's
theorem is known to be true for Euclidean, hyperbolic, spherical
and even Minkowski planes. In \cite{gl2}, we will exam the related
discrete conformality in the new setting.

\subsection{A globally defined diffeomorphism}

\begin{theorem}\label{17} Suppose $\T$ and $\T'$ are two triangulations of
$(S,V)$ so that $D_c(\T) \cap D_c(\T') \neq \emptyset$. Then

\begin{equation}\label{fml:gluing}
A_{\T}|_{D_c(\T) \cap
D_c(\T')} = A_{\T'}|_{D_c(\T) \cap D_c(\T')}.
\end{equation}

 In particular, the gluing of these $A_{\T}|_{D_c(\T)}$
mappings produces a homeomorphism $A=\cup_{\T} A_{\T}|_{D_c(\T)}:
T_{hp}(S,V) \to T_D(\Sigma)$ such that $A(d)$ and $A(d')$ have the
same underlying hyperbolic structure if and only if $d$ and $d'$
are discrete conformal.
\end{theorem}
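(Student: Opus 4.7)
The plan is to verify the overlap identity (\ref{fml:gluing}), assemble the pieces $A_{\T}|_{D_c(\T)}$ into a globally defined continuous map $A$, and then translate discrete conformality through $A$. For (\ref{fml:gluing}), I would first reduce to the case of a single diagonal flip. If $[d]\in D_c(\T)\cap D_c(\T')$ then both $\T$ and $\T'$ are Delaunay triangulations of $d$, and by Proposition \ref{134}(b) they are joined by a finite sequence of Delaunay-preserving diagonal switches, each producing another Delaunay triangulation of $d$. Hence it suffices to prove $A_{\T}([d])=A_{\T'}([d])$ when $\T$ and $\T'$ differ by exactly one flip along an edge $e$ replaced by $e'$.

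The key step is a Ptolemy-identity comparison for that single flip. Because both $\T$ and $\T'$ are Delaunay for $d$, Lemma \ref{thm:Leibon} forces equality in the Delaunay inequality at $e$ (an explicit angle calculation shows that the inequality at $e$ in $\T$ is reversed at $e'$ in $\T'$, so both directions must hold with equality); equivalently, the marked quadrilateral $Q=t\cup_e t'$ is cyclic. By Kubota's Proposition \ref{kubota}, the four side lengths and two diagonal lengths of $Q$ satisfy the hyperbolic Ptolemy identity (\ref{h-ptolemy}). After the substitution $\lambda_i=\sinh(x_i/2)$ implemented by $\Theta$, this identity coincides in form with Penner's Ptolemy identity (\ref{pid}) governing the coordinate change $\Psi_{\T'}^{-1}\circ\Psi_{\T}$ on the decorated Teichm\"uller side. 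Hence the transition $\Theta\circ\Phi_{\T'}^{-1}\circ\Phi_{\T}\circ\Theta^{-1}$ agrees with $\Psi_{\T'}^{-1}\circ\Psi_{\T}$ on the overlap, which translates directly into $A_{\T}=A_{\T'}$ there. I expect this Ptolemy coincidence to be the main obstacle until one isolates the common algebraic form $xx'+yy'=zz'$ of both identities under $\lambda=\sinh(x/2)$; everything downstream of this is formal.

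With (\ref{fml:gluing}) established, the real analytic diffeomorphisms $A_{\T}|_{D_c(\T)}\colon D_c(\T)\to D(\T)$ of the previous theorem glue into a continuous map $A\colon T_{hp}(S,V)\to T_D(\Sigma)$. The two sides carry compatible cell decompositions indexed by isotopy classes of triangulations of $(S,V)$, namely Theorem \ref{Hazel} for $T_{hp}(S,V)$ and Penner's Delaunay cell decomposition for $T_D(\Sigma)$. Since $A$ sends $D_c(\T)$ diffeomorphically onto $D(\T)$ for each $\T$ and the cells match up, $A$ is a bijection, and a homeomorphism by applying the same argument to $A^{-1}$.

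Finally, for the characterization of discrete conformality, I would unwind Definition \ref{12345} under $A$. Condition (c) only relabels the triangulation and leaves the underlying polyhedral metric, hence $A(d_i)$, unchanged. Condition (b) reads $\sinh(x_{d_{i+1}}(e)/2)=e^{u(v)+u(v')}\sinh(x_{d_i}(e)/2)$; via $\lambda(e)=\sinh(x(e)/2)$ this becomes exactly the rule by which Penner's $\lambda$-lengths transform when the horoball at each $v_i$ is rescaled by the factor $e^{u(v_i)}$, a pure change of decoration that preserves the underlying complete hyperbolic structure on $\Sigma$. Thus discrete conformality forces $A(d)$ and $A(d')$ to share the same underlying hyperbolic structure. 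For the converse, given two decorated metrics $(h,w)$ and $(h,w')$ with common $h$, I would join them by the path $\log w(t)=(1-t)\log w+t\log w'$ at fixed $h$; this path meets only finitely many Delaunay cells $D(\T)$ of $T_D(\Sigma)$, with each boundary crossing corresponding to condition (c) (the cell wall is cyclic) and each within-cell segment to condition (b) (a conformal factor). Pulling back by $A^{-1}$ assembles the required discrete conformal chain joining $d$ to $d'$.
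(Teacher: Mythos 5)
Your proposal is correct and follows essentially the same route as the paper: reduce to a single Delaunay flip via Proposition~\ref{134}(b), compare the two transition functions via the Kubota and Penner Ptolemy identities, which coincide under the substitution $\Theta$, then glue to a homeomorphism, and finally translate conditions (b) and (c) of Definition~\ref{12345} into decoration changes and diagonal switches on the $T_D(\Sigma)$ side. Two small remarks. First, you usefully make explicit what the paper takes for granted when it invokes Kubota's identity: that two Delaunay triangulations of the same metric differing by a flip across $e$ force the quadrilateral $t\cup_e t'$ to be cyclic, by playing the Leibon inequality at $e$ in $\T$ against the inequality at the flipped edge in $\T'$; this is exactly the hypothesis under which Proposition~\ref{kubota} applies. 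Second, for the converse direction you propose the specific straight-line path in $\log w$, whereas the paper chooses a generic smooth path that meets the cells $D(\T)$ transversely. Your path is workable since the cells are semi-algebraic and a segment can only meet a cell wall in a finite set or be contained in it, but a genericity argument (or a small perturbation of your segment) cleanly rules out the path lying inside a wall for an interval and guarantees the finite subdivision $t_0<\cdots<t_m$ with one Delaunay cell per subinterval; I would add a sentence to that effect.
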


\begin{proof} Suppose $d \in D_c(\T) \cap D_c
(\T')$, i.e., $\T$ and $\T'$ are both Delaunay in the hyperbolic
polyhedral metric $d$. Then by proposition \ref{134} there exists
a sequence of triangulations $\T_1 =\T, \T_2, ..., T_k =\T'$ on
$(S,V)$ so that each $\T_i$ is Delaunay in $d$ and $\T_{i+1}$ is
obtained from $\T_i$ by a diagonal switch. In particular,
$A_{\T}(d) =A_{\T'}(d)$ follows from $A_{\T_i}(d)=A_{\T_{i+1}}(d)$
for $i=1,2,..., k-1$.
 Thus, it suffices to show $A_{\T}(d) =A_{\T'}(d)$ when $\T'$ is obtained from
$\T$ by a diagonal switch along an edge $e$. This is the same as
showing $\Psi_{\T}^{-1} \Psi_{\T'} =\Theta
\Phi_{\T}^{-1}\Phi_{\T'} \Theta^{-1}$ at the point $x
=\Psi_{\T'}^{-1}(d)$. On the other hand, $\Psi_{\T}^{-1}
\Psi_{\T'}(x)$ and $\Theta \Phi_{\T}^{-1}\Phi_{\T'}
\Theta^{-1}(x)$ have the same coordinate except at the $e$ edge of
diagonal switch. For the edge $e$, the two coordinates  are the
same due to the Penner's Ptolemy identity (\ref{pid}) (for
$\Psi_{\T}^{-1} \Psi_{\T'}$) and Kubota's Ptolemy identity
(\ref{h-ptolemy}) (for $\Phi_{\T}^{-1}\Phi_{\T'}$). These two
identities differ by a change of variable $t \to
\sinh(\frac{t}{2})$ which corresponds to $\Theta$. Therefore,
$A_{\T}(d) =A_{\T'}(d)$.

  Taking the inverse, we obtain

\begin{equation}\label{fml:gluing 2}
A_{\T}^{-1}|_{D(\T) \cap D(\T')} =
A_{\T'}^{-1}|_{D(\T) \cap D(\T')}.
\end{equation}

\begin{lemma}
\begin{itemize}
\item[(a)] $D_c(\T) \cap
D_c(\T') \neq \emptyset$ if and only if $D(\T) \cap D(\T') \neq
\emptyset$.
\item[(b)] The gluing map $A =\cup_{\T} A_{\T}|_{D_c(\T)}: T_c
\to T_D$ is a homeomorphism invariant under the action of the
mapping class group.
\end{itemize}
\end{lemma}

\begin{proof} By (\ref{fml:gluing}) and (\ref{fml:gluing 2}), the maps  $A =\cup_{\T} A_{\T}|_{D_c(\T)}: T_c \to T_D$
and $B =\cup_{\T} A^{-1}_{\T}|_{D(\T)}: T_D \to T_c$ are well
defined and  continuous. Since $A(D_c(\T) \cap D_c(\T') ) \subset
D(\T) \cap D(\T')$ and $B(D(\T) \cap D(\T')) \subset D_c(\T) \cap
D_c(\T')$, part (a) follows. To see part (b), by Penner's result
\cite{penner} that $T_D =\cup_{\T} D(\T)$, the map $A$ is onto. To
see $A$ is injective, suppose $x_1 \in D_c(\T_1), x_2 \in
D_c(\T_2)$ so that $A(x_1) =A(x_2) \in D(\T_1) \cap D(\T_2)$.
Apply (\ref{fml:gluing 2}) to $A^{-1}_{\T_1}|, A^{-1}_{\T_2}|$ on
the set $D(\T_1) \cap D(\T_2)$ at the point $A(x_1)$, we conclude
that $x_1=x_2$. This shows that $A$ is a bijection with inverse
$B$. Since both $A$ and $B$ are continuous, $A$ is a
homeomorphism. $\square$
\end{proof}

Now if $d$ and $d'$ are two discrete conformally equivalent
hyperbolic polyhedral metrics, then  $A(d)$ and $A(d')$ are of the
form $(p, w)$ and $(p, w')$ due to the definitions.  Indeed, if
$d$ and $d'$ are related by condition (b) in definition
\ref{12345}, then the discrete conformality translates to the
change of decoration without changing the hyperbolic metric. (This
is the same proof as in \cite{glsw}, lemma 3.1).  If $d$ and $d'$
are related by condition (c) in definition \ref{12345}, then the
two triangulations $\T_i$ and $\T_{i+1}$ are both Delaunay in
$[d]$. Therefore, in this case, $A(d)=A(d')$.

On the other hand, if two hyperbolic cone metrics $d, d'$ satisfy
that $A(d)$ and $A(d')$ are of the form $(p, w)$ and $(p, w')$,
consider a generic smooth path $\gamma(t)=(p, w(t)), t \in [0,1]$,
in $T_D(\Sigma)$ from $(p,w)$ to $(p, w')$ so that $\gamma(t)$
intersects the cells $D(\T)$'s transversely. This implies that
$\gamma$ passes through a finite set of cells $D(\T_i)$ and $\T_j$
and $\T_{j+1}$ are related by a diagonal switch. Let
 $t_0=0<... <t_m=1$ be a partition of $[0,1]$ so that $\gamma([t_i,
t_{i+1}]) \subset D(\T_i)$. Say $d_i$ is the hyperbolic polyhedral
metric so that $A(d_i) =\gamma(t_i) \in D(\T_i) \cap D(\T_{i+1})$,
$d_1=d$ and $d_m=d'$. Then by definition, the sequences $\{d_1,
..., d_m\}$ and the associated Delaunay triangulations $\{\T_1,
..., T_m\}$ satisfy the definition of discrete conformality for
$d,d'$.

\end{proof}

\begin{theorem} The homeomorphism $A: T_{hp}(S,V) \to T_D(\Sigma)$
is a $C^1$ diffeomorphism.
\end{theorem}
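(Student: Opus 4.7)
The plan is to exploit the real analytic cell decomposition $T_{hp}(S,V)=\cup_{\T} D_c(\T)$ of Theorem \ref{Hazel}. On the interior of each cell $D_c(\T)$, the map $A$ coincides with $A_\T$, which is a real analytic diffeomorphism by the previous theorem, so $A$ is manifestly $C^\omega$ there. The entire question therefore concerns smoothness at boundary points of cells. Arguing by induction on the codimension of the strata (the partial derivatives of $A$, defined and continuous on the open dense union of codimension-$\leq 1$ strata, extend continuously to higher codimension strata by standard arguments), it suffices to verify $C^1$ smoothness at a relative interior point $p$ of an arbitrary codimension-one face $F = D_c(\T)\cap D_c(\T')$. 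Such a face corresponds to $\T$ and $\T'$ differing by a single diagonal switch along an edge $e$, and is cut out by the equality case of Leibon's inequality, i.e., by the cyclicity of the marked quadrilateral $t\cup_e t'$.

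Near such a $p$, both $A_\T$ and $A_{\T'}$ extend real analytically to a common open neighborhood contained in $P(\T)\cap P(\T')$, and by Theorem \ref{17} they coincide on $F$. Consequently their tangential derivatives along $F$ agree automatically, and the only remaining task is to check that their derivatives normal to $F$ also match. Writing $A$ in the $(\Phi_\T,\Psi_\T)$ chart on one side of $F$ and in the $(\Phi_{\T'},\Psi_{\T'})$ chart on the other, both local representations reduce to $\Theta$, so the issue becomes whether the source transition $\Phi_{\T'}^{-1}\Phi_\T$ (governed by the hyperbolic cosine law) intertwines through $\Theta$ with the target transition $\Psi_{\T'}^{-1}\Psi_\T$ (governed by Penner's Ptolemy identity \eqref{pid}) \emph{to first order} at $p$. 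Zeroth order agreement at $p$ is precisely the content of Theorem \ref{17}, resting on the identical form $xx'+yy'=zz'$ of Kubota's identity \eqref{h-ptolemy} and Penner's identity \eqref{pid} under the substitution $t\mapsto \sinh(t/2)$ built into $\Theta$. First-order agreement at $p$ will follow by implicitly differentiating these two identities along the cyclic locus and verifying that the resulting Jacobians coincide after conjugation by $\Theta$, giving $DA_\T(p)=DA_{\T'}(p)$ and hence $A\in C^1$ at $p$.

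The main obstacle is this final implicit-differentiation computation, verifying that the Jacobians of the two transition maps agree along the cyclic locus after the $\Theta$-substitution. The analogous Euclidean computation is carried out in \cite{glsw}, and here one need only substitute the hyperbolic cosine law and Kubota's identity for their Euclidean counterparts, tracking the role of $\Theta$ throughout. Once $A$ is shown to be $C^1$, the same stratified argument applied symmetrically to $A^{-1}$, each $A_\T^{-1}$ being real analytic, yields that $A$ is a $C^1$ diffeomorphism.
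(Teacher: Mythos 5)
Your overall structure matches the paper's: reduce to a point $d\in D_c(\T)\cap D_c(\T')$, observe that $\T$ and $\T'$ differ by diagonal switches through Delaunay triangulations, so that it suffices to prove $DA_{\T}(d)=DA_{\T'}(d)$ for a single diagonal switch, and then cast this as a derivative-matching statement for the diagonal length $A(x,y,z,w,a)$ against the ``Ptolemy prediction'' $B(x,y,z,w,a)=2\sinh^{-1}\bigl(\tfrac{s(x)s(z)+s(y)s(w)}{s(a)}\bigr)$ on the cyclic locus. (Your stratified induction on codimension is heavier machinery than the paper uses --- the paper simply checks derivative agreement at every point of every overlap $D_c(\T)\cap D_c(\T')$, which directly gives $C^1$ without needing the ``extend continuously to higher codimension strata'' step, whose justification you do not spell out.)

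The genuine gap is that you declare the derivative-matching computation to be ``analogous'' to the Euclidean one in \cite{glsw}, saying ``one need only substitute the hyperbolic cosine law and Kubota's identity for their Euclidean counterparts.'' This dismisses exactly what the paper identifies as the central difficulty of the hyperbolic setting: circumcircles of hyperbolic triangles need not be compact (they can be horocycles or equidistant curves), so the inscribed-circle radius used in the Euclidean sine rule has no direct analogue. The paper's Appendix has to introduce a three-way case split via Fenchel's formula (Lemma \ref{thm:radius}), replace the Euclidean sine rule by the hyperbolic one with a correction factor $\zeta\in\{\tanh r,\,1,\,\coth D\}$ (Lemma \ref{thm:sin rule}), prove the case analysis for the hyperbolic Ptolemy formula (Lemma \ref{thm:ptolemy}, four cases), and then make the decisive observation that when the quadrilateral is cyclic the $\zeta$ factors of the constituent triangles all coincide ($\zeta_1=\zeta_2=\zeta_3$), which is what makes the partial derivatives $\partial A/\partial x$ and $\partial A/\partial a$ collapse to $\partial B/\partial x$ and $\partial B/\partial a$. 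None of this is present in your sketch, and it is not obtainable by a mechanical substitution into the Euclidean argument. You also describe the computation as ``implicitly differentiating'' the two Ptolemy identities ``along the cyclic locus,'' but this does not capture the actual task: $A$ and $B$ agree only on a hypersurface, so you cannot differentiate the identity $A=B$; you must compute the two gradients independently (the paper does so via the cosine law and its derivative) and then show they coincide on the locus.

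\end{document}
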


\begin{proof} It suffices to show that for a point $d \in D_c(\T)
\cap D_c(\T')$, the derivatives $DA_{\T}(d)$ and $DA_{\T'}(d)$
are the same. Since  both $\T$ and $\T'$ are Delaunay in $d$ and
are related by a sequence of Delaunay triangulations (in $d$)
$\T_1=\T, \T_2, ..., \T_k =\T'$, $DA_{\T}(d)=DA_{\T'}(d)$ follows
from $DA_{\T_i}(d) =DA_{\T_{i+1}}(d)$ for $i=1,2,..., k-1$.
Therefore, it suffices to show $DA_{\T}(d) = DA_{\T'}(d)$ when
$\T$ and $\T'$ are related by a diagonal switch at an edge $e$. In
the coordinates $\Phi_{\T}$ and $\Psi_{\T}$, the
 fact that $DA_{\T}(d) = DA_{\T'}(d)$  is equivalent to  the following smoothness
question on the diagonal lengths.

\begin{lemma}\label{thm:derivative} Suppose $Q$ is a convex hyperbolic quadrilateral
whose four edges are of lengths $x,y,z,w$ (counted cyclically) and
the length of a diagonal is $a$. Suppose $A(x,y,z,w,a)$ is the
length of the other diagonal and $B(x,y,z,w,a)=2
\sinh^{-1}(\frac{s(x)s(z)+s(y)s(w)}{s(a)})$ where $s(t)=
\sinh(\frac{t}{2})$. If a point $(x,y,z,w,a)$ satisfies
$A(x,y,z,w,a)=B(x,y,z,w,a)$, i.e., $Q$ is inscribed in a curve of
constant geodesic curvature, then $DA(x,y,z,w,a)=DB(x,y,z,w,a)$
where $DA$ is the derivative of $A$.
\end{lemma}

Due to the lengthy proof of this lemma, we defer it to the
appendix.

\end{proof}

\begin{corollary}\label{thm:conformal class} For a given hyperbolic
 polyhedral metric $d$ on $(S,V)$, the set of all Teichm\"uller equivalence classes of hyperbolic metrics
  on $(S, V)$ which are discrete conformal to $d$ is $C^1$-diffeomorphic
  to $ \mathbb{R}^{|V|}.$
\end{corollary}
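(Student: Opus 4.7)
The plan is to deduce the corollary directly from the $C^1$ diffeomorphism $A\colon T_{hp}(S,V)\to T_D(\Sigma)$ of the preceding theorem, together with its characterization of discrete conformality: $A(d)$ and $A(d')$ share the same underlying hyperbolic structure if and only if $d$ and $d'$ are discrete conformal.

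First, I would set up the natural forgetful projection $\pi\colon T_D(\Sigma)\to T(\Sigma)$ which sends a decorated metric $(p,w)$ to its underlying complete finite-area hyperbolic metric $p$. By the very definition of the decorated Teichm\"uller space, the fiber $\pi^{-1}(p)$ consists of all choices of horocycle lengths $w=(w_1,\ldots,w_n)\in\mathbb{R}_{>0}^{n}$ at the $n=|V|$ cusps of $p$, so $\pi^{-1}(p)$ is canonically parametrized by $\mathbb{R}_{>0}^{n}$, which is $C^1$-diffeomorphic to $\mathbb{R}^{|V|}$ via the logarithm $(w_1,\ldots,w_n)\mapsto(\log w_1,\ldots,\log w_n)$. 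Moreover $\pi^{-1}(p)$ sits inside $T_D(\Sigma)$ as a $C^1$-embedded submanifold, since in any $\lambda$-length chart $\Psi_{\T}$ associated with a triangulation $\T$ of $(S,V)$, the restriction of $\pi$ is given by smooth formulas relating $\lambda$-lengths to the underlying hyperbolic structure and horocycle lengths.

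Next, let $p$ be the hyperbolic structure underlying $A(d)$. By Theorem \ref{17}, a hyperbolic polyhedral metric $d'$ on $(S,V)$ is discrete conformal to $d$ if and only if $\pi(A(d'))=p$, i.e., if and only if $d'\in A^{-1}(\pi^{-1}(p))$. Thus the discrete conformal class of $d$ is precisely $A^{-1}(\pi^{-1}(p))$. Since $A$ is a $C^1$ diffeomorphism (the theorem immediately preceding this corollary) and $\pi^{-1}(p)$ is $C^1$-diffeomorphic to $\mathbb{R}^{|V|}$, the conclusion follows by composing these diffeomorphisms.

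There is essentially no hard step here once Theorem \ref{17} and the diffeomorphism theorem are in hand; the only points to verify with any care are that $\pi^{-1}(p)$ is a $C^1$ submanifold of $T_D(\Sigma)$ and that its parametrization by horocycle lengths is smooth, both of which are standard in Penner's setup. The substantive work has already been done in proving that $A$ is a global $C^1$ diffeomorphism compatible with discrete conformality.
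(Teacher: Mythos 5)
Your proof is correct and is essentially the intended argument: the paper states this corollary immediately after establishing that $A$ is a $C^1$ diffeomorphism, leaving the deduction to the reader, and your deduction—identifying the discrete conformal class of $d$ with $A^{-1}(\{p\}\times\mathbb{R}_{>0}^{n})$ via Theorem \ref{17} and then composing with the logarithm—is exactly what is meant.
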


\section{Discrete Uniformization for Hyperbolic Polyhedral Metrics}
This section proves theorem \ref{thm:hyperbolic} which is the main
result of this paper.

By Corollary \ref{thm:conformal class}, Theorem
\ref{thm:hyperbolic} is equivalent to a statement about the
composition map of the discrete curvature map $K$ and $(A|)^{-1}$
defined on $\{p\} \times \R_{>0}^n \subset T_D(\Sigma)$ for any $p
\in T(\Sigma)$. Here $K: T_{hp}(S,V) \to (-\infty, 2\pi)^n$ is the
map sending a metric $d$ to its discrete curvature $K_d$.  Let us
make a change of variables from $w=(w_1, ..., w_n) \in \R_{>0}^n$
to $u=(u_1, ..., u_n) \in \R^n$ where $u_i =\ln(w_i)$. We write
$w=w(u)$. For a given $p \in T(\Sigma)$, define  $F$ to be the
composition of $K$ and   $(A|)^{-1}$ from $ \R^n$ to $(-\infty,
2\pi)^n$ by
\begin{equation}\label{fml:curv}
F(u) =K_{A^{-1}(p, w(u))}.
\end{equation}

 By the Gauss-Bonnet theorem, the
image $F(u)$ lies in the open subset $\mathbf P=\{ x \in (-\infty,
2\pi)^n | \sum_{i=1}^n x_i
>2\pi \chi(S)\}$ of $\R^n$. Theorem
\ref{thm:hyperbolic} is equivalent to that $F: \R^n \to \mathbf P$
is a bijection. We will show a stronger statement that $F$ is a
homeomorphism.

For simplicity, we use $s(t)$ to denote the function
$\sinh(\frac{t}2)$.

\subsection{Injectivity of $F$ }

Since $A$ is a $C^1$ diffeomorphism and the discrete curvature $K:
\T_{hp}(S,V) \to \R^V$ is real analytic, hence the map $F$ is
$C^1$ smooth.

On the other hand, we have,

\begin{theorem}[Akiyoshi \cite{ak}] 
For any finite area complete hyperbolic metric $p$ on $\Sigma$,
there are only finitely many isotopy classes of triangulations
$\T$ so that $([p] \times \R_{>0}^n) \cap D(\T) \neq \emptyset$.
\end{theorem}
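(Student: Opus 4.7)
The plan is to translate the finiteness claim into a question about combinatorial types of convex hulls in Minkowski space via the Epstein--Penner construction. Realize $p$ as $\mathbf{H}^2/\Gamma$ for a Fuchsian group $\Gamma$; in the hyperboloid model $\mathbf{H}^2\subset\R^{2,1}$, the $n$ cusps correspond to $n$ $\Gamma$-orbits of rays on the future light cone $L^+$. A decoration $w\in\R_{>0}^n$ picks out a $\Gamma$-invariant discrete point set $B_w\subset L^+$ with one point on each ray, the Minkowski norm from the origin being determined by $w_i$. By Penner's theorem, the decorated Delaunay decomposition of $(p,w)$ is the radial projection to $\mathbf{H}^2$ (then to $\Sigma$) of the faces of $\mathrm{conv}(B_w)$ visible from the origin. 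Thus finiteness of $\T$ amounts to showing that only finitely many combinatorial types of such $\Gamma$-equivariant hulls arise as $w$ varies.

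First I would exploit the scale-invariance of the Delaunay condition (\ref{fml:penner delau 3}) under $w\mapsto\mu w$: uniform scaling multiplies every $\lambda$-length by $1/\mu$, preserving each inequality. Passing to $u_i=\log w_i$, the induced cell decomposition of $\R^n$ by the intersections $(\{p\}\times\R_{>0}^n)\cap D(\T)$ is invariant under the diagonal translation $u\mapsto u+c(1,\ldots,1)$, so it suffices to analyze the quotient hyperplane $H=\{\sum u_i=0\}$. The central step is to prove a uniform bound: there exists $C=C(p)$ such that every edge of every Delaunay triangulation for $(p,w)$, represented as a geodesic arc on $(\Sigma,p)$ from $V$ to $V$, has geometric intersection number at most $C$ with a fixed reference ideal triangulation $\T_0$ of $(\Sigma,p)$. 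On a fixed hyperbolic surface only finitely many isotopy classes of such arcs satisfy such a bound, which then implies that only finitely many triangulations $\T$ can appear.

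The main obstacle is proving the bound $C(p)$, where controlling the behavior of $w$ at infinity in $H$ becomes delicate. A Delaunay edge $e$ lifts to a geodesic $\tilde{e}\subset\mathbf{H}^2$ between two rays $\rho,\rho'\subset L^+$, and the empty-ball condition supplies a spacelike affine hyperplane through chosen points $b\in\rho\cap B_w$ and $b'\in\rho'\cap B_w$ that separates the origin from all other points of $B_w$. Normalizing via the diagonal scaling so that this supporting hyperplane lies at bounded Minkowski distance from the origin, one can translate $\tilde{e}$ by an element of $\Gamma$ into a fixed compact region of $\mathbf{H}^2$; the discreteness of $B_w$, which is uniform in $w$ modulo scaling, then bounds how much $\tilde{e}$ can wind before being obstructed by another point of $B_w$. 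The subtle point is ruling out edges that spiral many times around nontrivial loops before exiting to a distant cusp, which seems to require a careful compactness argument on $H$ together with the local finiteness of Penner's cell decomposition of $T_D(\Sigma)$.
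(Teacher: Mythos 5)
The paper does not prove this theorem; it cites Akiyoshi \cite{ak} for it, so there is no internal argument to compare against. Evaluating your proposal on its own merits, though, it has a genuine gap exactly where you flag the ``main obstacle,'' and the fix you gesture at cannot work as stated.

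You set the problem in the correct framework (Epstein--Penner convex hull in $\R^{2,1}$, scale-invariance of the Delaunay/support conditions, reduction to the slice $H=\{\sum u_i=0\}$), but you never prove the uniform bound $C(p)$ on the intersection numbers, and you defer it to ``a careful compactness argument on $H$.'' That cannot close the gap: $H\cong\R^{n-1}$ is noncompact, and the local finiteness of Penner's cell decomposition is a local statement, so it gives no control as $u$ escapes to infinity in $H$. Worse, the assertion that ``the discreteness of $B_w$ \dots is uniform in $w$ modulo scaling'' is false in the direction you need. Along the slice $\{p\}\times\R_{>0}^n$, fixing the group $\Gamma$ and letting some $u_i\to\pm\infty$ while $\sum u_i=0$ moves the corresponding light-cone points arbitrarily far from or close to the origin; the $\Gamma$-orbit $B_w$ does not stay within a fixed ``quasi-lattice'' geometry, the supporting hyperplane through a pair $b,b'$ need not stay in a bounded region after a single diagonal rescaling, and hence your step of translating $\tilde e$ into a fixed compact piece of $\mathbf H^2$ with a uniformly bounded obstructing set is unjustified. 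This is precisely where spiralling edges could, a priori, proliferate, and your argument supplies no mechanism to rule them out.

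The missing idea is a control of degenerating decorations, and this is what Akiyoshi's proof actually provides. Rather than bounding intersection numbers, his argument analyzes what happens to the Epstein--Penner hull as a subset of the weights is sent to $0$ or $\infty$: the canonical decomposition stabilizes on such rays, and the limiting decompositions correspond to decompositions associated with strictly fewer decorated cusps, permitting an induction on $n$ together with local finiteness on the compact part of the (projectivized) parameter space. In short, your high-level reduction is right, but the heart of the theorem is the degeneration analysis at the ends of $H$, and the proposal leaves exactly that step unproved while appealing to a compactness that is not available.
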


Let $\T_i$, $i=1,..., k$, be the set of all triangulations so that
$(\{p\} \times \R^n) \cap D(\T_i) \neq \emptyset$ and $\{p\}
\times \R^n \subset \cup_{i=1}^k D(\T_i)$.

\begin{lemma}  Let $\phi: \R^n \to \{p\}\times \R^n$ be $\phi(x)=(p, x)$ and
 $U_i =\phi^{-1}((\{p\} \times \R^n) \cap D(\T_i)) \subset \R^n$ and $J =\{ i | $
\text{$int(U_i) \neq \emptyset$}\}. Then $ \R^n =\cup_{i \in J}
U_i$ and $U_i$ is real analytic diffeomorphic to a convex polytope
in $ \R^n$.
\end{lemma}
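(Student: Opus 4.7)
The plan is to change coordinates from $u\in\R^n$ to $w=(e^{u_1},\ldots,e^{u_n})\in\R^n_{>0}$; in these multiplicative coordinates the Penner Delaunay inequalities defining each $U_i$ become linear, from which the convex polytope claim follows easily. The covering statement will then be handled by a generic perturbation argument.

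For the convex polytope claim, first I invoke Penner's rescaling formula: for the fixed complete finite area hyperbolic structure $p$ on $\Sigma$, the $\lambda$-length of an edge $e$ joining cusps $v_k$ and $v_l$, decorated by horocycles of lengths $w_k$ and $w_l$, is
$$\lambda_e(w)=\frac{c_e(p)}{\sqrt{w_k w_l}},$$
where $c_e(p)>0$ depends only on $p$ and $e$. (This follows from Penner's relation $\lambda=e^{\delta/2}$ together with the fact that rescaling a horocycle by a factor $t$ shifts the signed horocycle-to-horocycle distance $\delta$ along any incident edge by $-\log t$.) Substituting this into the Penner Delaunay inequality (\ref{fml:penner delau 3}) at an edge $e_0=\{v_1,v_2\}$ whose two adjacent triangles have third vertices $v_3,v_4$, a direct calculation (clearing the common factor $1/\sqrt{w_1 w_2}$) reduces (\ref{fml:penner delau 3}) to the equivalent linear inequality
$$c_3 c_4\bigl(c_1^2 w_1+c_2^2 w_2-c_0^2 w_3\bigr)+c_1 c_2\bigl(c_3^2 w_2+c_4^2 w_1-c_0^2 w_4\bigr)\geq 0$$
in $(w_1,w_2,w_3,w_4)$. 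Therefore the set $\widetilde U_i:=\{w\in\R_{>0}^n:(p,w)\in D(\T_i)\}$ is cut out from $\R_{>0}^n$ by finitely many closed linear half-spaces; it is a convex polytope in the polyhedral sense. Since $u\mapsto w=e^u$ is a real analytic diffeomorphism $\R^n\to\R_{>0}^n$ carrying $U_i$ bijectively onto $\widetilde U_i$, $U_i$ is real analytic diffeomorphic to a convex polytope.

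For the covering $\R^n=\bigcup_{i\in J}U_i$, start from $\R^n=\bigcup_{i=1}^k U_i$, which holds because every decorated metric $(p,e^u)$ admits a Delaunay triangulation and the list $\T_1,\ldots,\T_k$ is complete. Fix $u\in\R^n$. The degenerate locus $\mathcal D\subset\R^n$ where some Delaunay inequality (\ref{fml:penner delau 3}) holds with equality (equivalently, some cyclic polygon appears in the Delaunay tessellation) is a finite union of real analytic hypersurfaces, hence nowhere dense. Choose a direction $v\in\R^n$ such that $\{u+\epsilon v:0<\epsilon<\epsilon_0\}\subset\R^n\setminus\mathcal D$ for some $\epsilon_0>0$. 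On $\R^n\setminus\mathcal D$ the Delaunay tessellation of $(p,e^{u'})$ is a genuine triangulation that is locally constant in $u'$; on the connected interval $(0,\epsilon_0)$ this triangulation is therefore a single $\T_j$. Since all Delaunay inequalities hold strictly at $u+\epsilon v$, we have $u+\epsilon v\in\mathrm{int}(U_j)$ and so $j\in J$; closedness of $U_j$ (defined by non-strict inequalities) gives $u\in U_j$ upon letting $\epsilon\to 0^+$.

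The main technical step is the algebraic linearization of (\ref{fml:penner delau 3}) in $w$-coordinates; once that is in hand, both conclusions follow from routine arguments.
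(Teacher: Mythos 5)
Your proof is essentially correct and, for the convex polytope claim, follows the same route as the paper's: the paper also passes to horocycle‑length coordinates (it writes $\delta(v)=\lambda(v)^{-2}$, which is exactly your $w(v)$) and uses the multiplicative rescaling of Penner's $\lambda$-lengths (their $\sinh(x(e)/2)=b(e)\lambda(v_1)\lambda(v_2)$ is your $\lambda_e=c_e(p)/\sqrt{w_{v_1}w_{v_2}}$) to reduce (\ref{fml:penner delau 3}) to a linear inequality in $w$, hence a convex polyhedron in the open orthant. For the covering $\R^n=\cup_{i\in J}U_i$ the paper argues somewhat differently: $\cup_{i\in J}U_i$ is closed (each $U_i$ is the preimage of the closed cell $D(\T_i)$), while the complement is contained in $\cup_{i\notin J}U_i$, a finite union of closed sets of empty interior (lower‑dimensional polyhedra in the $w$-chart), and an open non-empty set cannot sit inside such a union. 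Your generic‑ray perturbation argument reaches the same conclusion, but it tacitly requires that some ray $u+\epsilon v$, $0<\epsilon<\epsilon_0$, misses the degenerate locus $\mathcal D$; this does follow, since the linearization shows the walls of each $\widetilde U_i$ are finitely many genuine affine hyperplanes in $w$ (each linear form is non-trivial — e.g.\ the coefficient of $w_3$ in your inequality is $-c_0^2c_3c_4<0$), so any $v$ not parallel to the finitely many walls through $w(u)$ works, but the paper's dimension/Baire‑type count sidesteps this check. Both routes are valid and rest on the identical main idea; yours is slightly more constructive in producing a witness $\T_j$ with $u\in U_j$, $j\in J$.
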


\begin{proof} By definition, both $\{p\} \times \R^n$ and $D(\T_i)$
are closed and semi algebraic in $T_D(\Sigma)$. Therefore $U_i$ is
closed in $\R^n$ and is diffeomorphic under $w=w(u)$ to a
semi-algebraic set. Now by definition, $Y: =\cup_{i \in J} U_i$ is
a closed subset of $ \R^n$ since $U_i$ is closed. If $Y \neq
\R^n$, then the complement $ \R^n -Y$ is a non-empty open set
which is diffeomorphic under $w=w(u)$ to a finite union of real
algebraic sets of dimension less than $n$. This is impossible.

Finally, we will show that for any triangulation $\T$ of $(S,V)$
and $p \in T(\Sigma)$, the intersection $U =\phi^{-1}( (\{p\} \times R^n)
\cap D(\T))$ is real analytically diffeomorphic to a convex
polytope in a Euclidean space. In fact $\Psi_{\T}^{-1}(U) \subset
\R^{E(\T)}$ is  real analytically diffeomorphic to a convex
polytope. To this end, let $b =\Psi_{\T}(p, (1,1,....,1))$. By
definition, $\Psi_{\T}^{-1}(U)$ is give by
\begin{align*}
&\{ x \in \R^{E(\T)}_{>0}| \exists \lambda \in \R_{>0}^V,
\sinh(x(e)/2) =b(e) \lambda(v_1) \lambda(v_2),
\partial e=\{v_1, v_2\}, \\
&\hspace{120pt} \text{Delaunay condition (\ref{eq:algdel}) holds
for $x$}\}.
\end{align*}

We claim that the Delaunay condition (\ref{eq:algdel}) consists of
linear inequalities in the variable $\delta: V\to \R_{>0}$ where
$\delta(v) = \lambda(v)^{-2}$. Indeed, suppose the two triangles
adjacent to the edge $e=(v_1, v_2)$ have vertices $v_1, v_2, v_3$
and $v_1, v_2, v_4$. Let $x_{ij}$ (respectively $b_{ij}$) be the
value of $x$ (respectively $b$) at the edge joining $v_i, v_j$,
and $\lambda_i=\lambda(v_i)$ and let $s(t)$ be the function
$\sinh(\frac{t}{2})$. By definition, $s(x_{ij})=b_{ij} \lambda_i
\lambda_j$. The Delaunay condition (\ref{eq:algdel}) at the edge
$e=(v_1v_2)$ says that

\begin{equation*} \frac{s(x_{12})^2}{s(x_{31})s(x_{32})} +
\frac{s(x_{12})^2}{s(x_{41})s(x_{42})} \leq
\frac{s(x_{31})}{s(x_{32})} + \frac{s(x_{32})}{s(x_{31})}
+\frac{s(x_{41})}{s(x_{42})} +\frac{s(x_{42})}{s(x_{41})}
\end{equation*}
It is the same as, using
$s(x_{ij})=b_{ij}\lambda_i \lambda_j$,

$$ c_3 \frac{\lambda_1 \lambda_2}{\lambda_3^2} + c_4 \frac{\lambda_1\lambda_2}{\lambda_4^2} \leq
c_1\frac{\lambda_2}{\lambda_1} + c_2\frac{\lambda_1}{\lambda_2},$$
where $c_i$ is some constant depending only on $b_{jk}$'s.
Dividing above inequality by $\lambda_1 \lambda_2$ and using
$\delta_i=\lambda_i^{-2}$, we obtain

\begin{equation}\label{fml:delta}
c_3 \delta_3 + c_4 \delta_4 \leq c_1 \delta_1 + c_2 \delta_2
\end{equation}
at each edge $e\in E(\T)$. This shows for $b$ fixed, the set of
all possible values of $\delta$ form a convex polytope $\mathbf Q$
defined by (\ref{fml:delta}) at all edges and $\delta(v)>0$ at all
$v \in V$. On the other hand, by definition, the map from $\mathbf
Q$ to $\Psi_{\T}^{-1}(U)$ sending $\delta$ to $x=x(\delta)$ given
by $x(vv') =2 \sinh^{-1}(\frac{b(vv')}{\sqrt{ \delta(v)
\delta(v')}})$ is a real analytic diffeomorphism. Thus the result
follows.
\end{proof}

Write $F=(F_1, ..., F_n)$ which is $C^1$ smooth. The work of
Bobenko-Pinkall-Springborn (\cite{bps}, proposition 5.1.5) shows
that

\begin{itemize}
\item[(a)] $F_j|_{U_h}$ is real analytic so that
$\frac{\partial F_i}{\partial u_j} =\frac{\partial F_j}{\partial
u_i}$ in $U_h$ for all $h \in J$,
\item[(b)] the Hessian matrix
$[\frac{\partial F_i}{\partial u_j}]$ is positive definite
on each $U_h$.
\end{itemize}

Therefore, the 1-form $\eta =\sum_{i} F_i(u) du_i$ is a $C^1$
smooth 1-form on $\R^n$ so that $d\eta =0$ on each $U_h, h\in J$.
This implies that $d \eta =0$ in $\R^n$. Hence the integral
\begin{equation}\label{eq:w}W(u) =\int_{0}^u \eta \end{equation}
is a well defined $C^2$ smooth
function on $\R^n$ so that its Hessian matrix is positive
definite. Therefore, $W$ is convex in $\R^n$ so that its gradient
$\nabla W=F$. Now $F$ is injective due to the following well known
lemma,

\begin{lemma} If $W: \Omega \to \R$ is a  $C^1$-smooth strictly convex
function on an open convex set $\Omega \subset \R^m$, then its
gradient $\nabla W: \Omega \to \R^m$ is an embedding.
\end{lemma}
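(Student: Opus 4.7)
The plan is to reduce the problem to a one-variable strict monotonicity statement along each line segment in $\Omega$, and then promote injectivity to an embedding via invariance of domain. The only genuine content is injectivity; continuity of $\nabla W$ is free from $W\in C^1$.

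For injectivity, I would argue by contradiction: suppose $\nabla W(x)=\nabla W(y)$ with $x\ne y$ in $\Omega$. Since $\Omega$ is convex, the segment $\{(1-t)x+ty : t\in[0,1]\}$ lies in $\Omega$, so $g(t):=W((1-t)x+ty)$ is well-defined, $C^1$, and strictly convex on $[0,1]$ (strict convexity of $g$ is just the restriction of the defining inequality for $W$ to this segment). By the chain rule,
\[
g'(0)=\langle \nabla W(x),\, y-x\rangle, \qquad g'(1)=\langle \nabla W(y),\, y-x\rangle.
\]
A $C^1$ strictly convex function of one variable has strictly increasing derivative, hence $g'(1)>g'(0)$. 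But the assumption $\nabla W(x)=\nabla W(y)$ forces these two quantities to be equal, a contradiction.

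To conclude that $\nabla W$ is an embedding, I would invoke Brouwer's invariance of domain: a continuous injective map from an open subset of $\R^m$ into $\R^m$ is necessarily open onto its image, hence a homeomorphism onto the (automatically open) subset $\nabla W(\Omega)\subset\R^m$. Together with the injectivity just proved, this gives the embedding conclusion.

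I do not anticipate any real obstacle: the whole argument is one-dimensional and essentially amounts to the standard equivalence, for $C^1$ functions on an interval, between strict convexity and strict monotonicity of the derivative. If one wished to avoid invariance of domain, one could instead construct the Legendre transform $W^{*}(p)=\sup_{x\in\Omega}(\langle p,x\rangle - W(x))$ and verify that $\nabla W^{*}$ is a continuous inverse to $\nabla W$ on the image, but this is overkill for the usage needed in the paper, where only the injectivity of $F=\nabla W$ is being applied.
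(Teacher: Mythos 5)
Your proof is correct, and the paper actually omits a proof entirely, stating the lemma as ``well known'' without argument. Your injectivity step via restriction to segments is the standard route: the one-variable fact that a $C^1$ strictly convex function has a strictly increasing derivative gives $g'(1)>g'(0)$, directly contradicting $\nabla W(x)=\nabla W(y)$. (Equivalently one can phrase this as strict monotonicity of the gradient, $\langle\nabla W(x)-\nabla W(y),\,x-y\rangle>0$ for $x\ne y$, which is the same computation.) Passing from continuous injectivity to embedding via invariance of domain is also the standard finish, and as you note it is more than the paper actually uses, since only injectivity of $F=\nabla W$ is applied. No gaps.
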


\subsection{The map $F$ is onto}

Since both $\R^n$ and $\mathbf P =\{ x \in (-\infty, 2\pi)^n |
\sum_{i=1}^n x_i
>2\pi \chi(S)\}$ are connected manifolds of
dimension $n$ and $F$ is injective and continuous, it follows that
$F(\R^n)$ is open in $\mathbf P$. To show that $F$ is onto, it
suffices to prove that $F(\R^n)$ is closed in $\mathbf P$.

To this end, take a sequence $\{u^{(m)}\}$ in $\R^n$ which leaves
every compact set in $\R^n$. We will show that $\{F(u^{(m)})\}$
leaves each compact set in $\mathbf P$. By taking subsequences, we
may assume that for each index $i=1,2,...,n$, the limit $\lim_{m}
u^{(m)}_i = t_i$ exists in $[-\infty, \infty]$. Furthermore, by
Akiyoshi's theorem that the space $p \times \R^n$ is in the union
of a finite number of Delaunay cells $D(\T)$, we may  assume,
after taking another subsequence, that the corresponding
hyperbolic polyhedral metrics $d_m= A^{-1}(p, w(u^{(m)}))$ are in
$D(\T)$ for one triangulation $\T$. We will calculate in the
length coordinate $\Phi_{\T}$ below.

Since $u^{(m)}$ does not converge to any vector in $\R^n$, there exists $t_i=\infty$ or $-\infty$.
Let us label vertices $v \in V$ by \it black
\rm and \it white \rm as follows. The vertex $v_i$ is black if and
only if $t_i=-\infty$ and all other vertices are white.

\begin{lemma}\label{thm:limit}
(a) There does not exist a triangle $\tau \in \T$ with
exactly two white vertices.

(b) If $\Delta v_1v_2v_3$ is a triangle with exactly one white
vertex at $v_1$, then the inner angle of the triangle at $v_1$
converges to $0$ as $m \to \infty$ in the metrics $d_m$.
\end{lemma}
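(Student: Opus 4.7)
The approach is to exploit the explicit formula (derived in the proof of the preceding Theorem from Penner's theory) expressing the polyhedral edge lengths of $d_m = A^{-1}(p, w(u^{(m)}))$ in terms of $w = e^u$. With $b = \Psi_\T^{-1}(p, (1, \ldots, 1))$ the reference $\lambda$-length vector, one has
\[
\sinh \tfrac{x_m(vv')}{2} = b(vv')\sqrt{w_v^{(m)}\, w_{v'}^{(m)}}
\]
for each edge $vv' \in E(\T)$. In particular, any edge incident to a black vertex satisfies $x_m(e) \to 0$, and if both endpoints of $e$ are black then $x_m(e)$ goes to $0$ strictly faster, since its $\sinh$ carries two vanishing $\sqrt{w}$-factors rather than one.

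For part (a), I argue by contradiction: suppose $\Delta v_1 v_2 v_3 \in \T$ has $v_1, v_2$ white and $v_3$ black. The factor $\sqrt{w_3^{(m)}} \to 0$ forces $x_m(v_1 v_3), x_m(v_2 v_3) \to 0$, whereas
\[
\sinh \tfrac{x_m(v_1 v_2)}{2} = b(v_1 v_2)\sqrt{w_1^{(m)} w_2^{(m)}}
\]
is bounded below by a positive constant (white vertices have $w_v$ bounded below away from $0$), so $x_m(v_1 v_2)$ is bounded below by a positive constant as well. This contradicts the triangle inequality $x_m(v_1 v_3) + x_m(v_2 v_3) > x_m(v_1 v_2)$ for all sufficiently large $m$.

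For part (b), consider $\Delta v_1 v_2 v_3$ with $v_1$ white and $v_2, v_3$ black. The ratio
\[
\frac{\sinh(x_m(v_2 v_3)/2)}{\sinh(x_m(v_1 v_2)/2)} = \frac{b(v_2 v_3)}{b(v_1 v_2)}\,\sqrt{\frac{w_3^{(m)}}{w_1^{(m)}}}
\]
tends to $0$ because $w_3^{(m)} \to 0$ while $w_1^{(m)}$ is bounded below. Converting from $\sinh(x/2)$ to $\sinh(x)$ via $\sinh x = 2\sinh(x/2)\cosh(x/2)$ introduces only a uniformly bounded multiplicative factor (since $\cosh(x_m(v_2 v_3)/2) \to 1$ while $\cosh(x_m(v_1 v_2)/2) \geq 1$), so the hyperbolic law of sines
\[
\sin(\angle v_1) = \sin(\angle v_3) \cdot \frac{\sinh x_m(v_2 v_3)}{\sinh x_m(v_1 v_2)}
\]
forces $\sin(\angle v_1) \to 0$. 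The alternative $\angle v_1 \to \pi$ is ruled out because the same estimate yields $x_m(v_2 v_3) \ll x_m(v_1 v_2) + x_m(v_1 v_3)$, so the triangle remains far from the degenerate $a = b+c$ configuration required for an interior angle of $\pi$.

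The main technical subtlety is keeping the estimate in part (b) uniform across all the ways $u_1^{(m)}$ can be white: if $u_1^{(m)} \to +\infty$ then $w_1^{(m)} \to \infty$ and $x_m(v_1 v_2)$ may itself blow up rather than tend to $0$. Fortunately, $\sqrt{w_3/w_1} \to 0$ even more strongly in that regime, and the $\cosh$-factor ratio remains bounded by a fixed constant, so the bound $\sin(\angle v_1) \to 0$ persists.
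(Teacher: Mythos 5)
Your overall strategy---exploit the product formula $\sinh(x(e)/2)=b(e)\cdot(\text{factor at }v)\cdot(\text{factor at }v')$, so that edges incident to black vertices acquire a vanishing factor---is the right one, and for part (b) it works. But there is a gap in part (a), and a small error in the formula throughout.

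First, the minor issue: with $u_i=\ln w_i$ and the discrete conformal change $\sinh(x_{d'}(e)/2)=e^{u(v)+u(v')}\sinh(x_d(e)/2)$ of Definition~\ref{12345}, the correct relation is $\sinh(x_m(vv')/2)=b(vv')\,w^{(m)}_v w^{(m)}_{v'}$, i.e.\ $b(vv')\,e^{u^{(m)}_v+u^{(m)}_{v'}}$, not $b(vv')\sqrt{w^{(m)}_v w^{(m)}_{v'}}$. This does not change the sign of any limit, so it is cosmetic, but it should be fixed.

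The real gap is in part (a). You assert that $\sqrt{w_3^{(m)}}\to 0$ ``forces $x_m(v_1v_3),x_m(v_2v_3)\to 0$.'' That is false when a white endpoint has $u\to+\infty$: for instance with $u_1^{(m)}=2m$, $u_2^{(m)}=0$, $u_3^{(m)}=-m$ one gets $\sinh(x_m(v_1v_3)/2)=b\,e^{u_1+u_3}=b\,e^m\to\infty$, so $x_m(v_1v_3)\to\infty$. (You yourself flag exactly this possibility in your last paragraph, in the context of part (b), but your argument for part (a) tacitly assumes it does not happen.) The desired contradiction with the triangle inequality does in fact always occur, but establishing it requires handling this regime; the paper does so by applying $\sinh$ to the triangle inequality and then dividing the resulting inequality by $e^{u_1^{(m)}+u_2^{(m)}+u_3^{(m)}}$, after which one side is bounded (because white means $e^{-u}$ is bounded above, regardless of whether $u\to+\infty$) while the other side tends to $\infty$. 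Your proof of (a) is therefore incomplete as written, even though the statement is correct; you would need to replace the $\to 0$ claim with a normalization of this kind.

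Part (b) is fine and takes a genuinely different route from the paper. You use the hyperbolic law of sines to show $\sin(\angle v_1)\to 0$, reducing to the ratio $\sinh(x_m(v_2v_3)/2)/\sinh(x_m(v_1v_2)/2)=\mathrm{const}\cdot e^{u_3-u_1}\to 0$ and the bounded $\cosh$-correction, and then rule out $\angle v_1\to\pi$ by noting $x_m(v_2v_3)\ll x_m(v_1v_2)+x_m(v_1v_3)$. The paper instead uses the identity of Lemma~\ref{thm:relation} together with a Euclidean comparison triangle with side lengths $s(l_i^{(m)})$; that comparison is valid because the Delaunay condition (via Theorem~\ref{9} and Proposition~\ref{thm:cycle}) gives the triangle inequality for the $s(l_i^{(m)})$. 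Your law-of-sines version avoids any explicit appeal to the Delaunay hypothesis, which is a modest simplification, while the paper's version feeds more directly into its normalization $s(a_i)e^{-u_i^{(m)}}$. Both are correct.
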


\begin{proof}
To see (a), suppose otherwise, using the $\Phi_{\T}$ length
coordinate, we see the given assumption is equivalent to
following. There exists a hyperbolic triangle of lengths
$l_1^{(m)},l_2^{(m)},l_3^{(m)}$ such that
$s(l_i^{(m)})=s(a_i)e^{u^{(m)}_j +u^{(m)}_k}$,
$\{i,j,k\}=\{1,2,3\}$, where $\lim_{m} u^{(m)}_i >- \infty$ for
$i=2,3$ and $\lim_{m} u^{(m)}_1 =-\infty$. By applying
$\sinh(t/2)$ to the triangle inequality
$l_2^{(m)}+l_3^{(m)}>l_1^{(m)}$  and using angle sum formula for
$\sinh$, we obtain
$$s(l_2^{(m)})\sqrt{1+s(l_3^{(m)})^2}+s(l_3^{(m)})\sqrt{1+s(l_2^{(m)})^2}>s(l_1^{(m)}).$$
Thus
\begin{align*}
&s(a_2) e^{u^{(m)}_1+u^{(m)}_3}\sqrt{1+s(a_3)^2 e^{2u^{(m)}_1+2u^{(m)}_2}}+s(a_3) e^{u^{(m)}_1+u^{(m)}_2}\sqrt{1+s(a_2)^2 e^{2u^{(m)}_1+2u^{(m)}_3}}\\
&>s(a_1) e^{u^{(m)}_2+u^{(m)}_3}.
\end{align*}

This is the same as
$$s(a_2)\sqrt{e^{-2u^{(m)}_2}+s(a_3)^2 e^{2u^{(m)}_1}}+ s(a_3)\sqrt{e^{-2u^{(m)}_3}+s(a_3)^2 e^{2u^{(m)}_1}}  > s(a_1) e^{-u^{(m)}_1}.$$
However, by the assumption, the right-hand-side tends to $\infty$
and the left-hand-side is bounded. The contradiction shows that
(a) holds.

To see (b), we use the same notation as in the proof of (a). Let
$\alpha_1^{(m)}$ be the inner angle at $v_1$ of the triangle
$\Delta v_1v_2v_3$ in $d_m$ metric. Our goal is to show $\lim_m
\alpha_1^{(m)}=0$.



Since the sequence of hyperbolic polyhedral metrics $\{d_m\}$ are
Delaunay in the same triangulation $\mathcal{T}$, by proposition
\ref{thm:cycle}, the three numbers $s(l_1^{(m)}),
s(l_2^{(m)}),s(l_3^{(m)})$ satisfy the triangle inequality.
Therefore, for each $m$, there is a Euclidean triangle whose sides
have lengths $s(l_1^{(m)}), s(l_2^{(m)}), s(l_3^{(m)})$. Since
$s(l_i^{(m)})=s(a_i)e^{u_j^{(m)}+u_k^{(m)}}$, this triangle is
similar to the Euclidean triangle $\Delta$ whose sides have
lengths $s(a_1)e^{-u_1^{(m)}}$, $s(a_1)e^{-u_2^{(m)}}$ and
$s(a_1)e^{-u_3^{(m)}}.$ By the assumption that $\lim_m u_1^{(m)}
> -\infty$ and $\lim_m u_2^{(m)}=-\infty$ and
$\lim_m u_3^{(m)}=-\infty$, the three edge lengths
$s(a_1)e^{-u_1^{(m)}}, s(a_1)e^{-u_2^{(m)}}, s(a_1)e^{-u_3^{(m)}}$
tend to $t \in \R$, $\infty$ and $\infty$ respectively. Therefore
the angle in the Euclidean triangle $\Delta$ opposite to the edge
of length $s(a_1)e^{-u_1^{(m)}}$ approaches $0$. By the cosine law
for Euclidean triangle, we obtain
$$\lim_m \frac{s(l_2^{(m)})^2+s(l_3^{(m)})^2-
s(l_1^{(m)})^2}{2s(l_2^{(m)})s(l_3^{(m)})} =1. $$

On the other hand, from Lemma \ref{thm:relation}, we have

$$\sin\frac{\alpha_2^{(m)}+\alpha_3^{(m)}-\alpha_1^{(m)}}2 \cdot \cosh\frac{l_1^{(m)}}2= \frac{s(l_2^{(m)})^2+s(l_3^{(m)})^2-s(l_1^{(m)})^2}{2s(l_2^{(m)})s(l_3^{(m)})}.$$

Also we have $\lim_m l_1^{(m)}=0$ due to  $\lim_m
u_2^{(m)}=-\infty$ and $\lim_m u_3^{(m)}=-\infty$. Hence
$$\lim_m \sin\frac{\alpha_2^{(m)}+\alpha_3^{(m)}-\alpha_1^{(m)}}2=1.$$
It is equivalent to $$\lim_m
(\alpha_2^{(m)}+\alpha_3^{(m)}-\alpha_1^{(m)}) =\pi \geq \lim_m
(\alpha_2^{(m)}+\alpha_3^{(m)}+\alpha_1^{(m)}) . $$

Thus $$\lim_m \alpha_1^{(m)} \leq 0.$$

Hence $$\lim_m \alpha_1^{(m)} = 0.$$

\end{proof}

We now finish the proof of $F(\R^n)=\mathbf P$ as follows.

Case 1. All vertices are white. There exists $t_i=\infty.$ Let $\triangle v_iv_jv_k$ be a triangle at vertex $v_i$. There exists a hyperbolic triangle of lengths $l_i^{(m)},l_j^{(m)},l_k^{(m)}$ such that $s(l_i^{(m)})=s(a_i)e^{u^{(m)}_j +u^{(m)}_k}$ (similar formulas hold for $l_j^{(m)}$ and $l_k^{(m)}$). Then $\lim_m l_j^{(m)}= \lim_m l_k^{(m)}=\infty.$
Let $\alpha_i^{(m)}$ be the inner angle at $v_i$.
By the cosine rule,
                   \begin{align*}
\lim_m \cos \alpha_i^{(m)} &=\lim_m \frac{-\cosh l_i^{(m)}+\cosh l_j^{(m)} \cosh l_k^{(m)}}{\sinh l_j^{(m)} \sinh l_k^{(m)}} \\
                           &=\lim_m \frac{-\cosh l_i^{(m)}+\cosh l_j^{(m)} \cosh l_k^{(m)}}{\cosh l_j^{(m)} \cosh l_k^{(m)}}
                           \cdot \lim_m \frac{\cosh l_j^{(m)} \cosh l_k^{(m)}}{\sinh l_j^{(m)} \sinh l_k^{(m)}} \\
                           &=\lim_m \frac{-\cosh l_i^{(m)}+\cosh l_j^{(m)} \cosh l_k^{(m)}}{\cosh l_j^{(m)} \cosh l_k^{(m)}} \\
                           &=-\lim_m \frac{\cosh l_i^{(m)}}{\cosh l_j^{(m)} \cosh l_k^{(m)}}+1 \\
                           &=-\lim_m \frac{2s(l_i^{(m)})^2+1}{(2s(l_j^{(m)})^2+1)(2s(l_k^{(m)})^2+1)}+1 \\
                           &=-\lim_m \frac{2s(l_i^{(m)})^2}{(2s(l_j^{(m)})^2+1)(2s(l_k^{(m)})^2+1)}+1 \\
                           &=-\lim_m \frac{2s(a_i)^2e^{2u_j^{(m)}+2u_k^{(m)}}}{(2s(a_j)^2e^{2u_i^{(m)}+2u_k^{(m)}}+1)(2s(a_k)^2e^{2u_i^{(m)}+2u_j^{(m)}}+1)}+1 \\
                           &=-\lim_m \frac{2s(a_i)^2}{(2s(a_j)^2e^{2u_i^{(m)}}+e^{-2u_k^{(m)}})(2s(a_k)^2e^{2u_i^{(m)}}+e^{-2u_j^{(m)}})}+1 \\
                           &=1.
\end{align*}

Therefore each inner angle at $v_i$ approaches $0$. The curvature
of $d_m$ at $v_i$ approaches $2\pi.$ This shows that $F(u^{(m)})$
tends to infinity of $\mathbf P$.

Case 2. All vertices are black. Then the length of each edge
approaches $0$. Each hyperbolic triangle approaches a Euclidean
triangle. The sum of the curvatures at all vertices approaches
$2\pi\chi(S)$. This shows that $F(u^{(m)})$ tends to infinity of
$\mathbf P$.

Case 3. There exist both white and black vertices. Since the
surface $S$ is connected, there exists an edge $e$ whose end
points $v, v_1$ have different colors. Assume $v$ is white and
$v_1$ is black. Let $v_1, ..., v_k$ be the set of all  vertices
adjacent to $v$ so that $v, v_i, v_{i+1}$ form vertices of a
triangle and let $v_{k+1}=v_1$. Now applying part (a) of Lemma
\ref{thm:limit} to triangle $\Delta vv_1v_2$ with $v$ white and
$v_1$ black, we conclude that $v_2$ must be black. Repeating this
to $\Delta vv_2v_3$ with $v$  white and $v_2$ black, we conclude
$v_3$ is black. Inductively, we conclude that all $v_i$'s, for
$i=1,2,..., k$, are black. By part (b) of Lemma \ref{thm:limit} ,
we conclude that the curvature of $d_m$ at $v$ tends to $2\pi$.
This shows that $F(u^{(m)})$ tends to infinity of $\mathbf P$.

Cases 1,2,3 show that $F(\R^n)$ is closed in $\mathbf P$.
Therefore $F(\R^n)=\mathbf P$.

\subsection{Discrete Yamabe flow}
Given $K^* \in (-\infty, 2\pi)^V$ so that $\sum_{v \in V}$$ K^*(v)
> 2\pi \chi(S),$ by the proof above, there exists $u^* \in \R^n$
so that $F(u^*)=K^*$. Furthermore, the function $F$ is the
gradient $\bigtriangledown W$ of a strictly convex function $W(u)$
defined on (\ref{eq:w}) on $\R^n$.

The discrete Yamabe flow with surgery is defined to be the
gradient flow of the strictly convex function $W^*(u)=W(u)
-\sum_{i=1}K^*_i u_i$.  This flow is a generalization of the
discrete Yamabe flow introduced in \cite{luo}. Since $F(u^*)=K^*$,
we see  $\bigtriangledown W^*(u^*) =0$, i.e., $W^*$ has a unique
minimal point $u^*$ in $\R^n$. It follows that the gradient flow
of $W^*$ converges to the minimal point $u^*$ as time approaches
infinity.

In the formal notation, the flow takes the form
$\frac{du_i(t)}{dt} = K_i-K^*_i$ and $u(0)=0$. The exponential
convergence of the flow can be established using exactly the same
method used for Theorem 1.4 of \cite{luo}.

\section{Algorithmic aspect of discrete conformality}
We will prove theorem 2 in this section.

Suppose $\alpha$ and $\alpha'$ are two hyperbolic (or Euclidean)
polyhedral metrics on $(S,V)$ given in terms of edge lengths in
two geodesic triangulations $\T$ and $\T'$, i.e., $l=
\Phi^{-1}_{\T}(\alpha)$ and $l' =\Phi^{-1}_{\T'}(\alpha')$ are two
vectors in $\R^{E(\T)}$ and $\R^{E(\T')}$. We will produce an
algorithm to decide if $d$ and $d'$ are discrete conformal using
the data $(\T, l)$ and $(\T', l')$.

There are two steps involved in the algorithm.

In the first step, using proposition \ref{134}(c), we may assume
that both $\T$ and $\T'$ are Delaunay in metrics $\alpha$ and
$\alpha'$ respectively. (The same also holds for Euclidean
polyhedral metrics. This is a well known fact from computational
geometry. See for instance \cite{bs}). Next, consider two
decorated hyperbolic metrics $(d,w)=A_{\T}(\alpha)$ and
$(d',w')=A_{\T'}(\alpha')$ with their respective Penner's
$\lambda$-coordinates $y=\Psi_{\T}^{-1}(d,w)$ and
$y'=\Psi_{\T'}^{-1}(d',w')$. By theorem \ref{17}, we see Theorem 2
follows from,

\begin{prop} Suppose  two decorated
hyperbolic metrics $(d,w)$ and $(d',w')$ in $T_{D}(\Sigma)$ are
given in terms of $\lambda$-lengths in two triangulations. There
exists an algorithm to decide if $d=d'$.
\end{prop}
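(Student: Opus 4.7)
The plan is to reduce the question to elementary linear algebra after first expressing both decorated metrics in a common triangulation via Penner's Ptolemy identity. Because $\lambda$-lengths are geometric invariants and a change of decoration acts on them by a simple multiplicative formula indexed by $V$, the question of whether the underlying hyperbolic structures agree reduces to a single inhomogeneous linear system over $\R^V$.

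First I would transport the $\lambda$-lengths of $(d', w')$ into the triangulation $\T$ used to present $(d, w)$. Any two triangulations of $(S, V)$ are combinatorially related by a finite sequence of diagonal switches, and such a sequence $\T' = \T_0, \T_1, \ldots, \T_N = \T$ can be produced algorithmically by any standard procedure on triangulated surfaces. At each step from $\T_i$ to $\T_{i+1}$, Penner's Ptolemy identity (\ref{pid}) expresses the $\lambda$-length of the new diagonal as a rational function of the five surrounding $\lambda$-lengths, so iterating along the flip sequence produces a vector $\tilde y \in \R_{>0}^{E(\T)}$ of $\lambda$-lengths for $(d', w')$ in $\T$. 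By Penner's theorem that $\lambda$-lengths are global coordinates on $T_D(\Sigma)$, the resulting $\tilde y$ depends only on $(d', w')$ and $\T$, and not on the chosen flip path.

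With both decorated metrics now presented in the common triangulation $\T$ by $\lambda$-vectors $y$ and $\tilde y$, the identity $d = d'$ of underlying hyperbolic structures holds if and only if one vector can be obtained from the other by a change of decoration. At the level of $\lambda$-lengths, rescaling the horocycle at $v$ by a factor $e^{u(v)}$ multiplies $\lambda_e$ by $e^{u(v_1)+u(v_2)}$ whenever $e$ has endpoints $v_1, v_2$ (this is precisely the discrete conformal change already encoded by $\Theta$ in Section 3). Therefore $d = d'$ if and only if the inhomogeneous linear system
$$ \log \tilde y_e - \log y_e \;=\; u(v_1) + u(v_2), \qquad e = (v_1, v_2) \in E(\T),$$
admits a solution $u \in \R^V$, and the consistency of such a system is decidable in polynomial time by Gaussian elimination.

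The main obstacle is conceptual rather than computational: one must justify that the Ptolemy-driven computation of $\tilde y$ in step one is path-independent, and that a change of decoration is indeed the \emph{only} way two distinct $\lambda$-vectors in a common triangulation can correspond to the same underlying hyperbolic structure. Both facts are consequences of Penner's decorated Teichm\"uller theory and are already implicitly invoked in the construction of the homeomorphism $A$ of Theorem \ref{17}, so no new geometric input beyond what has been established is needed, and the algorithm above is effective.
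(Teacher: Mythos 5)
Your proposal is correct and follows essentially the same two-step outline as the paper: first transport the $\lambda$-lengths of $(d',w')$ to the triangulation $\T$ by a flip sequence (computed algorithmically, e.g.\ by Mosher's theorem, which the paper cites) together with Penner's Ptolemy identity, and then decide in a common triangulation whether the two $\lambda$-vectors come from the same underlying hyperbolic structure. The only difference is in the final comparison step. The paper invokes Thurston's shear coordinates $z(e) = \frac{y(e_1)y(e_3)}{y(e_2)y(e_4)}$ (with $e_1,\dots,e_4$ the cyclically ordered sides of the quadrilateral associated to $e$) and Penner's result that $d=d'$ iff the shear coordinates of $y$ and $\tilde y$ coincide. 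You instead set up the inhomogeneous linear system $\log \tilde y_e - \log y_e = u(v_1)+u(v_2)$ over $\R^V$ and test solvability by Gaussian elimination. These are dual formulations of the same fact: the shear coordinate $z(e)$ is precisely the ratio that is invariant under the decoration-rescaling action, so the linear system is consistent exactly when the shears agree. Your route is slightly more hands-on (it produces the decoration change $u$ when it exists, at the cost of solving a linear system) while the paper's is slightly more direct (a finite list of rational-function evaluations and equality checks). Both are effective and appeal to the same two pieces of Penner theory that you correctly identify: coordinate-change via Ptolemy along flips, and the characterization of when two $\lambda$-vectors in a fixed triangulation determine the same complete hyperbolic metric.
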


\begin{proof} By the construction $y=\Psi_{\T}^{-1}(d,w)$ and $y'
=\Psi_{\T'}^{-1}(d',w')$ are the two
 $\lambda$-lengths.  Our goal is to use  $y$ and $y'$ to
decide if $d=d'$. There are two cases  according to $\T$ and $\T'$
are isotopic or not.

In the first case, $\T$ and $\T'$ are isotopic. Then it is known
by the work of Penner \cite{penner} that $d=d'$ if and only if the
associated Thurston's shear coordinates of $y$ and $y'$ are the
same. Here the shear coordinate $z$ of $y$ is defined to be
$z(e)=\frac{y(e_1) y(e_3)}{y(e_2)y(e_4)}$ with $e_1, e_2, e_3,
e_4$ being a (fixed) cyclically ordered edges of the quadrilateral
associated to $e$. Thus one can check algorithmically if $d=d'$
using $y$ and $y'$.

In the second case that  $\T$ and $\T'$ are not isotopic, we can
algorithmically produce $y''=\Psi_{\T}^{-1}(d',w')$ from $y'$ and
$\T'$. Indeed, a well known theorem of L. Mosher \cite{mosher}
says that there exists an algorithm to produce a finite set of
triangulations $\T_1 =\T', \T_2, ..., \T_k=\T$ so that $\T_{i+1}$
is obtained from $\T_i$ by a diagonal switch. Penner's Ptolemy
identity shows that one can compute algorithmically
$\Psi_{\T_{i+1}}^{-1}(d',w')$ from $\Psi_{\T_{i}}^{-1}(d', w')$.
Thus we can algorithmically compute the new $\lambda$-length
coordinate $y'' =\Psi_{\T}^{-1}(d',w')$ from
$y'=\Psi_{\T'}^{-1}(d',w')$. This reduces the problem to the first
case.
\end{proof}

\section{Appendix}

In the appendix we prove  Lemma \ref{thm:derivative}. 
Let $s(x)=\sinh\frac x2.$

\begin{lemma}[Fenchel \cite{Fenchel} page 118]\label{thm:radius}
Given a hyperbolic triangle with side lengths $a,b,c$, then
$$
\frac{(s(a)s(b)s(c))^2}{(s(a)+s(b)+s(c))(s(a)+s(b)-s(c))(s(b)+s(c)-s(a))(s(c)+s(b)-s(a))}
$$
equals
\begin{itemize}
\item $\frac14\sinh^2 r$ if the triangle has a compact
circumcircle of radius $r$, \item $\infty$ if the circumcircle is
a horocycle, \item $-\frac14 \cosh^2 D$ if the circumcircle is of
constant distance $D$ to a geodesic.
\end{itemize}
\end{lemma}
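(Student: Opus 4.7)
The three cases of the lemma correspond exactly to the three types of curves of constant geodesic curvature in $\mathbf{H}^2$, and my plan is to compute $s(a), s(b), s(c)$ in coordinates adapted to each type of circumcircle and then simplify the Heron-like denominator directly in each case.

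For a compact circumcircle of radius $r$, I would place the hyperbolic circumcenter $O$ at the origin. The three isoceles triangles $\triangle OAB$, $\triangle OBC$, $\triangle OCA$ all have two sides of length $r$; writing $2\phi_c, 2\phi_a, 2\phi_b$ for their apex angles at $O$ gives $\phi_a+\phi_b+\phi_c = \pi$, and the hyperbolic cosine law in $\triangle OAB$ yields $\cosh c - 1 = \sinh^2 r\,(1-\cos 2\phi_c)$, hence $s(c) = \sinh r\,\sin\phi_c$, and likewise for $s(a), s(b)$. Thus $(s(a), s(b), s(c))$ is proportional to $(\sin\phi_a, \sin\phi_b, \sin\phi_c)$, which by the Euclidean law of sines is the triple of side lengths of a Euclidean triangle with circumradius $\sinh r/2$. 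The classical Euclidean identity $R^2 = (\text{product of sides})^2/(16\cdot\text{area}^2)$ combined with Heron's formula for the area then reads off $Q = \tfrac14 \sinh^2 r$.

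For a horocycle, I would work in the upper half-plane model with the horocycle placed at $\{y=1\}$. The distance formula gives $s(d((x_1,1),(x_2,1))) = |x_1-x_2|/2$, so with $x_1<x_2<x_3$ the triple $s(a), s(b), s(c)$ satisfies the degenerate equality $s(b) = s(a)+s(c)$, making one Heron factor vanish and giving $Q = \infty$. For an equidistant curve at distance $D$ from its axis, I would use the hyperboloid model with the axis in the plane $\{x_2 = 0\}$ and parametrize the curve as $P_i = (\cosh D\,\cosh t_i,\,\cosh D\,\sinh t_i,\,\sinh D)$; the Lorentzian inner product then gives $s(d(P_i,P_j)) = \cosh D\,\sinh(|t_i-t_j|/2)$. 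Writing $\alpha = (t_2-t_1)/2$, $\beta = (t_3-t_2)/2$, and applying standard sum-to-product identities, each of the four signed sums $\pm s(a)\pm s(b)\pm s(c)$ factors as $4\cosh D$ times a product of three $\sinh/\cosh$ terms in $\alpha/2, \beta/2, (\alpha+\beta)/2$; the product of the four factors collapses via the double-angle identity to $-4\cosh^4 D\,\sinh^2\alpha\,\sinh^2\beta\,\sinh^2(\alpha+\beta)$, while the numerator is $\cosh^6 D$ times the same positive quantity, producing $Q = -\tfrac14 \cosh^2 D$.

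The main obstacle is the sign bookkeeping in the equidistant case: one must verify that exactly one of the four Heron factors $\pm s(a)\pm s(b)\pm s(c)$ is negative (the one corresponding to the largest of $s(a), s(b), s(c)$), which is precisely what produces the minus sign in $-\tfrac14 \cosh^2 D$. A useful consistency check is that in all three cases the sign of the Heron discriminant matches the geometric trichotomy among circles, horocycles, and equidistants, with the horocycle case sitting on the boundary as the common degenerate limit.
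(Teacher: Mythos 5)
Your proposal is essentially correct, and there is nothing in the paper to compare it against: the paper states this lemma as a quoted result from Fenchel's book (page 118) and gives no proof, so a self-contained verification like yours is exactly what a reader would need. Your three computations check out. In the compact case, the cosine law in the isoceles triangle $OAB$ does give $s(c)=\sinh r\,\sin\phi_c$, and the Euclidean relations $R=\frac{abc}{4\,\mathrm{Area}}$ plus Heron's formula turn the displayed quantity into $R^2=\tfrac14\sinh^2 r$. In the horocycle case $s(d)=|x_1-x_2|/2$ on $\{y=1\}$ is correct, so the middle $s$-length is the sum of the other two, one Heron factor vanishes, and the quotient is $+\infty$. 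In the equidistant case your parametrization gives $s(d(P_i,P_j))=\cosh D\,\sinh(|t_i-t_j|/2)$, and writing the Heron product as $\bigl[(u+v)^2-w^2\bigr]\bigl[w^2-(u-v)^2\bigr]$ with $u=\cosh D\sinh\alpha$, $v=\cosh D\sinh\beta$, $w=\cosh D\sinh(\alpha+\beta)$, the sum-to-product identities yield exactly $-4\cosh^4 D\,\sinh^2\alpha\,\sinh^2\beta\,\sinh^2(\alpha+\beta)$, so $Q=-\tfrac14\cosh^2 D$; the sign bookkeeping you flag as the main obstacle is automatic here, since superadditivity of $\sinh$ forces precisely the factor $u+v-w$ to be negative. (You are also right to read the denominator as the symmetric Heron product; the printed statement repeats the factor $s(b)+s(c)-s(a)$, evidently a typo for $s(c)+s(a)-s(b)$.)

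One small point to patch in the compact case: the identity $\phi_a+\phi_b+\phi_c=\pi$ holds only when the circumcenter lies in (the closure of) the triangle. If it lies outside, the apex angles instead satisfy, say, $\phi_c=\phi_a+\phi_b$; but then the triple $(\sin\phi_a,\sin\phi_b,\sin\phi_c)$ is still realized as the side lengths of a Euclidean triangle with angles $\phi_a,\phi_b,\pi-\phi_c$ and the same circumradius $\tfrac12\sinh r$, since $\sin(\pi-\phi_c)=\sin\phi_c$. With that one sentence added, the argument is complete.
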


As a corollary we have,

\begin{lemma}\label{thm:sin rule}
Denote by $\alpha, \beta, \gamma$ the angles opposite to the sides
with lengths $a,b,c$. Then
$$\frac{\sinh a}{\sin \alpha}=2\zeta \cosh \frac a2 \cosh \frac b2 \cosh \frac c2,$$ where $\zeta$ equals
\begin{itemize}
\item $\tanh r$ if the triangle has a compact circumcircle of
radius $r$, \item $1$ if the circumcircle is a  horocycle, \item
$\coth D$ if the circumcircle  is of constant distance $D$ to a
geodesic.
\end{itemize}
\end{lemma}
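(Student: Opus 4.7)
The plan is to compute $\sinh a/\sin\alpha$ from the hyperbolic cosine law and then to show that, after rewriting everything in terms of $s(x)=\sinh(x/2)$, the result matches the expression controlled by Lemma \ref{thm:radius}. By the classical hyperbolic law of sines the ratio $\sinh a/\sin\alpha$ is symmetric in $a,b,c$, so I would actually work with the symmetrized quantity
\[
\frac{\sinh^2 a}{\sin^2\alpha}=\frac{\sinh^2 a\,\sinh^2 b\,\sinh^2 c}{U},\qquad U:=1+2\cosh a\cosh b\cosh c-\cosh^2 a-\cosh^2 b-\cosh^2 c,
\]
where $U$ comes from $\sin^2\alpha=1-\cos^2\alpha$ combined with $\cos\alpha=(\cosh b\cosh c-\cosh a)/(\sinh b\sinh c)$.

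The core step is an algebraic identity. Using $\sinh x=2s(x)\cosh(x/2)$ and $\cosh x=1+2s(x)^2$, one has
\[
\sinh^2 a\,\sinh^2 b\,\sinh^2 c=64\,(s(a)s(b)s(c))^2\bigl(\cosh(a/2)\cosh(b/2)\cosh(c/2)\bigr)^2,
\]
while writing $A=s(a)^2$, $B=s(b)^2$, $C=s(c)^2$ and expanding by degree produces, after the lower-order terms cancel,
\[
U=16\,ABC+4P,\qquad P:=-\bigl[A^2+B^2+C^2-2AB-2BC-2CA\bigr].
\]
A direct check shows $P=(s(a)+s(b)+s(c))(-s(a)+s(b)+s(c))(s(a)-s(b)+s(c))(s(a)+s(b)-s(c))$, which is exactly the denominator appearing in Lemma \ref{thm:radius}. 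Substituting and comparing with the desired equality reduces the whole problem to verifying
\[
\zeta^2=\frac{4Q}{4Q+P},\qquad Q:=(s(a)s(b)s(c))^2.
\]

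To finish I would plug in the three cases of Lemma \ref{thm:radius}. In the compact case $4Q=P\sinh^2 r$, so $4Q+P=P\cosh^2 r$ and $\zeta^2=\tanh^2 r$. In the horocycle case $P=0$, so $\zeta^2=4Q/4Q=1$. In the hypercycle case $4Q=-P\cosh^2 D$ with $P<0$, so $4Q+P=-P\sinh^2 D$ and $\zeta^2=\coth^2 D$. Taking the positive square root (both sides of the asserted identity are positive) yields the stated formula. The only real obstacle is the bookkeeping establishing $U=16ABC+4P$; I expect this to be a short but slightly delicate symmetric-polynomial expansion, best handled by organizing monomials of $U$ by total degree in $A,B,C$ and matching with the factored form of $P$.
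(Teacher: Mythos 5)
Your proposal is correct and follows essentially the same route as the paper: write $\sin^2\alpha$ via the cosine law, convert to the variables $s(\cdot)$ using $\cosh x=1+2s(x)^2$ and $\sinh x=2s(x)\cosh(x/2)$, factor the Heron-type quartic, and invoke Lemma \ref{thm:radius}. The only difference is presentational: you package the three circumcircle types into the single relation $\zeta^2=4Q/(4Q+P)$, whereas the paper carries out the compact case explicitly and disposes of the horocyclic and equidistant cases by a limit argument and a ``similar calculation.''
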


\begin{proof} Assume that the triangle has a circumscribed circle of radius $r$. By using the cosine rule and Lemma \ref{thm:radius},
\begin{align*}
\sin \alpha&=(1-\cos^2 \alpha)^{\frac12}\\
           &=\frac{(-\cosh^2a-\cosh^2b-\cosh^2c+1+2\cosh a\cosh b\cosh c)^\frac12}{\sinh b \sinh c} \\
           &=\frac2{\sinh b \sinh c} \cdot \{4s(a)^2s(b)^2s(c)^2+\\
           & 2s(a)^2s(b)^2+2s(b)^2s(c)^2+2s(c)^2s(a)^2-s(a)^4-s(b)^4-s(c)^4\}^{\frac12}        \\
           &=\frac2{\sinh b \sinh c} \cdot \{4s(a)^2s(b)^2s(c)^2+\\
           & (s(a)+s(b)+s(c))(s(a)+s(b)-s(c))(s(b)+s(c)-s(a))(s(c)+s(b)-s(a))\}^{\frac12}   \\
           &=\frac2{\sinh b \sinh c} \cdot \{4s(a)^2s(b)^2s(c)^2+\frac{4s(a)^2s(b)^2s(c)^2}{\sinh^2r} \}^{\frac12} \\
           &=\frac4{\sinh b \sinh c} \cdot s(a)s(b)s(c)\frac{\cosh r}{\sinh r}.
\end{align*}

By taking limit with $r\to \infty,$ we can prove the lemma for the
case that the triangle has a horocyclic circumcircle.

Similar calculation can be used to prove the lemma for the case
that the triangle has a circumscribed equidistant curve.
\end{proof}



\begin{lemma}\label{thm:ptolemy} Let $a,b,c,d$ be the side lengths of a hyperbolic quadrilateral
 and $e,f$ the diagonal lengths so that $a,b,c,d$ are cyclically ordered edge lengths and edges of lengths
 $a,b,e$ form a triangle.
\begin{itemize}
\item[(i)] The vertices of this quadrilateral lie on a curve of
constant geodesic curvature. \item[(ii)] Ptolemy's formula holds:
$$s(e)s(f)=s(a)s(c)+s(b)s(d).$$ \item[(iii)]
\begin{equation}\label{fml:p1}
s(e)^2=(s(a)s(c)+s(b)s(d))\frac{s(a)s(d)+s(b)s(c)}{s(a)s(b)+s(c)s(d)},
\end{equation}
and
\begin{equation*}
s(f)^2=(s(a)s(c)+s(b)s(d))\frac{s(a)s(b)+s(c)s(d)}{s(a)s(d)+s(b)s(c)}.
\end{equation*}
\end{itemize}
\end{lemma}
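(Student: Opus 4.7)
Item (i) sets up the hypothesis that $Q$ is inscribed in a curve of constant geodesic curvature in $\mathbf{H}^2$. Under this hypothesis, item (ii) is nothing but Kubota's identity (Proposition \ref{kubota}) applied to the cyclically ordered sides $a,b,c,d$ and the two diagonals $e,f$, so nothing further is needed for (ii) beyond citing that result.

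For (iii), the plan is to reduce everything to a single auxiliary ratio identity
\[\frac{s(e)}{s(f)} \;=\; \frac{s(a)s(d) + s(b)s(c)}{s(a)s(b) + s(c)s(d)}.\]
Once this is known, multiplying by Ptolemy's identity from (ii) immediately produces the stated formula for $s(e)^2$, and dividing Ptolemy by it produces the analogous formula for $s(f)^2$. This auxiliary identity is the hyperbolic counterpart of the classical Euclidean formula $e/f = (ad+bc)/(ab+cd)$ for the diagonals of a cyclic quadrilateral, with $s(\cdot)=\sinh(\cdot/2)$ playing the role of edge length.

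To establish the ratio identity, my preferred route uses Lemma \ref{thm:sin rule}. By hypothesis (i), each of the four triangles cut out by the two diagonals of $Q$ inscribes in the same curve of constant geodesic curvature, so all four triangles share one common value of $\zeta$. Applying the hyperbolic sine rule (in the uniform form provided by Lemma \ref{thm:sin rule}) to express every sub-angle at each vertex in terms of the $s$-values of the edges adjacent to that vertex, then summing the two sub-angles at a vertex via the sine addition formula and comparing with the sine-rule expression for the full interior angle across the opposite diagonal, produces the desired ratio after straightforward algebraic rearrangement. An alternative, more computational route is to apply the hyperbolic law of cosines to the two triangles flanking $e$, passing to half-angle variables via $\cosh x = 1 + 2s(x)^2$ (which yields the clean expressions $\cos^2(\beta/2) = (s(a{+}b)^2 - s(e)^2)/(\sinh a \sinh b)$ and $\sin^2(\beta/2) = (s(e)^2 - s(a{-}b)^2)/(\sinh a \sinh b)$), and eliminating the angles via Leibon's cyclicity condition.

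The main obstacle is keeping the algebra manageable. In the Euclidean case the proof collapses because opposite angles of a cyclic quadrilateral are supplementary, so $\cos\beta_4 = -\cos\beta_2$ gives a linear equation in $\cos\beta_2$. The hyperbolic situation is less forgiving: Leibon's condition yields only $\beta_2 + \beta_4 = \pi - \mathrm{Area}(Q)/2$, which does not decouple the two angles cleanly. This is why I prefer the sine-rule route above, where the common $\zeta$ replaces the supplementary-angle shortcut directly and no area correction enters; the factored form appearing in (iii) then emerges naturally as the product of the two ``pairings'' $s(a)s(c)+s(b)s(d)$ and $s(a)s(d)+s(b)s(c)$, divided by the third pairing $s(a)s(b)+s(c)s(d)$.
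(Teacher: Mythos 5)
Your plan covers only the directions (i)$\Rightarrow$(ii) and (i)$\Rightarrow$(iii). But the lemma is stated (and used) as an \emph{equivalence} of (i), (ii), (iii), and the paper's proof explicitly treats four implications: (i)$\Rightarrow$(ii) via Kubota, (ii)$\Rightarrow$(i) via Valentine's converse to Ptolemy (\cite{v}, Theorem~3.4), (iii)$\Rightarrow$(ii) by multiplying the two equations, and (i)$\Rightarrow$(iii). The converse direction (ii)$\Rightarrow$(i) is precisely what gets used later in the proof of Lemma~\ref{thm:derivative}: there one knows only that $A=B$, i.e., that the Ptolemy relation $s(e)s(f)=s(a)s(c)+s(b)s(d)$ holds numerically, and must conclude that the quadrilateral is inscribed in a curve of constant geodesic curvature in order to equate the $\zeta$'s. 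Your proposal never establishes this direction, and it is not a formality --- in the hyperbolic setting it is a nontrivial theorem.

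For (i)$\Rightarrow$(iii), your sine-rule route via the common $\zeta$ of Lemma~\ref{thm:sin rule} is a reasonable idea, and it is genuinely different from the paper's approach. However, as you yourself note, passing from the sine-rule expressions for the sub-angles at a vertex to the sine of their sum requires the cosines of those sub-angles, which brings in the cosine rule and square roots; the elimination is not ``straightforward algebraic rearrangement,'' and you never carry it out. The paper instead does a clean case split on the type of circumcurve: the circle case is cited from \cite{GS11}, the horocycle and geodesic cases become elementary identities in $s(\cdot)$ once one observes that the distances along a horocycle or geodesic through cyclically ordered points are additive (so $s(e)=s(a)+s(b)$, $s(d)=s(a)+s(b)+s(c)$, etc.\ in the horocycle case, and $e=a+b$, $d=a+b+c$, etc.\ in the geodesic case), and the equidistant case reduces to the geodesic case by orthogonal projection, which scales $s(\cdot)$ uniformly by $\cosh D$. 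You should either fill in the sine-rule algebra completely or adopt a case analysis of this kind; and in either case you must separately supply (ii)$\Rightarrow$(i) to obtain the equivalence.
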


\begin{proof}
\noindent (i)$\Longrightarrow$(ii). It was proved by T. Kubota
\cite{k}.

\noindent (ii)$\Longrightarrow$(i). It was proved by Joseph E.
Valentine \cite{v}, Theorem 3.4.

\noindent (iii)$\Longrightarrow$(ii). The product of the two
equations in (iii) produces the equation in (ii).

\noindent (i)$\Longrightarrow$(iii).

\noindent Case 1. When the vertices lie on a circle, it was proved
in \cite{GS11} (theorem 1, page 4).

\begin{figure}[ht!]
\labellist\small\hair 2pt \pinlabel $a$ at 210 423 \pinlabel $b$
at 313 482 \pinlabel $d$ at 309 363 \pinlabel $c$ at 403 434
\pinlabel $e$ at 258 411 \pinlabel $f$ at 366 423 \pinlabel
$\mathbb{R}$ at 108 273 \pinlabel $\mathbb{H}^2$ at 108 484

\endlabellist
\centering
\includegraphics[scale=0.4]{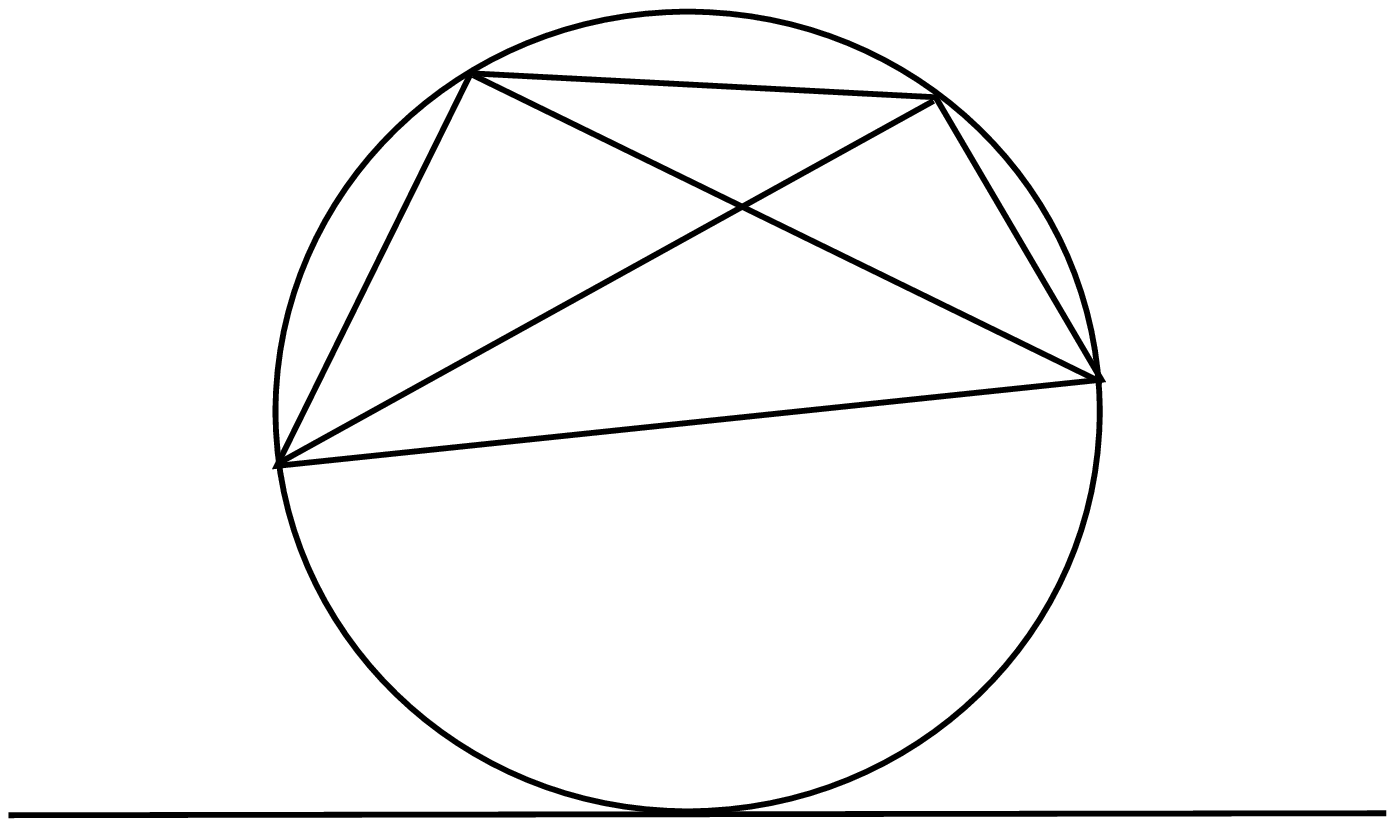}
\caption{} \label{fig:horo}
\end{figure}

\noindent Case 2. When the vertices lie on a horocycle, for
example as in Figure \ref{fig:horo}, we have
\begin{align*}
s(e)&=s(a)+s(b),\\
s(f)&=s(b)+s(c),\\
s(d)&=s(a)+s(b)+s(c).
\end{align*}
Then the equations in (iii) hold.

\noindent Case 3. When the vertices lie on a geodesic, without
loss of generality, we may assume

\begin{align*}
e&=a+b,\\
f&=b+c,\\
d&=a+b+c.
\end{align*}

Direct calculation shows that
$$s(a)s(c)+s(b)s(d)=s(a)s(c)+s(b)s(a+b+c)=s(a+b)s(c+b).$$
Similarly,
$$s(a)s(d)+s(b)s(c)=s(a+b)s(a+c),$$
$$s(a)s(b)+s(c)s(d)=s(c+a)s(c+b).$$
Therefore the right hand side of (\ref{fml:p1}) equals
$$s(a+b)s(c+b)\frac{s(a+b)s(a+c)}{s(c+a)s(c+b)}=s(a+b)^2=s(e)^2.$$

Similar argument proves the equation involving $s(f).$

\noindent Case 4. When the vertices lie on an equidistant curve
with distance $D$ to its geodesic axis, project the vertices to
the geodesic axis. The corresponding distance between those
projection of vertices are denoted by
$\overline{a},\overline{b},\overline{c},\overline{d},\overline{e},\overline{f}.$

By Case 3, we have
$$s(\overline{e})^2=(s(\overline{a})s(\overline{c})+s(\overline{b})s(\overline{d}))\frac{s(\overline{a})s(\overline{d})+s(\overline{b})s(\overline{c})}{s(\overline{a})s(\overline{b})+s(\overline{c})s(\overline{d})}.$$

Since
$$s(x)=s(\overline{x}) \cosh D $$ for $x=a,b,c,d,e,f,$ we have
$$s(e)^2=(s(a)s(c)+s(b)s(d))\frac{s(a)s(d)+s(b)s(c)}{s(a)s(b)+s(c)s(d)}.$$

\end{proof}

\subsection{Proof of Lemma \ref{thm:derivative}}

First, we verify that $$\frac{\partial A}{\partial
x}|_{A=B}=\frac{\partial B}{\partial x}.$$ The role of $x,y,z,w$
are the same with respect to $a.$ It is enough to verify the case
of variable $x$.

Now let $\alpha, \alpha', \beta, \beta'$ be the angles formed by
the pairs of edges $\{a,y\},\{a,x\},\{a,z\},\{a,w\}$ as Figure
\ref{fig:angle}.

\begin{figure}[ht!]
\labellist\small\hair 2pt \pinlabel $x$ at 159 537 \pinlabel $y$
at 299 553 \pinlabel $w$ at 183 419 \pinlabel $z$ at 322 434
\pinlabel $a$ at 206 484 \pinlabel $A$ at 237 532 \pinlabel
$\alpha$ at 343 496 \pinlabel $\beta$ at  342 477 \pinlabel
$\alpha'$ at 140 478 \pinlabel $\beta'$ at 150 463

\endlabellist
\centering
\includegraphics[scale=0.45]{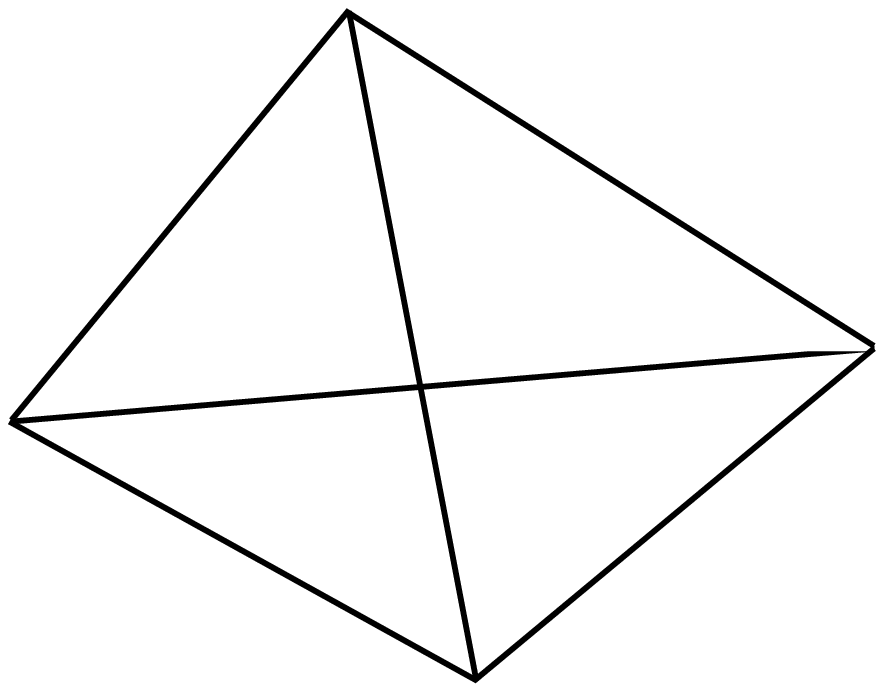}
\caption{} \label{fig:angle}
\end{figure}

In the triangle of lengths $y,z,A$, by the cosine rule,
$$\cosh A=\cosh y \cosh z- \sinh y \sinh z \cos(\alpha+\beta).$$
Taking derivative of both sides with respect to $x$, we have
$$\frac{\partial A}{\partial x}=\frac{\sinh y \sinh z \sin(\alpha+\beta)}{\sinh A} \cdot
\frac{\partial \alpha}{\partial x}.$$

In the triangle of lengths $x,y,a$, by the derivative of cosine
rule \cite{luo-rigidity}, we have
$$\frac{\partial \alpha}{\partial x}=\frac{\sinh x}{\sinh y \sinh a \sin \alpha}.$$

Therefore,
$$\frac{\partial A}{\partial x}=\frac{\sinh z }{ \sinh a}\cdot\frac{\sin(\alpha+\beta)}{\sinh A}\cdot \frac{\sinh x}{\sin\alpha}.$$

In the triangle of lengths $y, z, A$, Lemma \ref{thm:sin rule}
implies that
\begin{equation}\label{fml:ratio1}
\frac{\sinh A}{\sin(\alpha+\beta)}=2\zeta_1 \cosh \frac A2 \cosh
\frac y2 \cosh \frac z2.
\end{equation}

In the triangle of lengths $x,y,a$, Lemma \ref{thm:sin rule}
implies that
\begin{equation}\label{fml:ratio2}
\frac{\sinh x}{\sin \alpha}=2\zeta_2 \cosh \frac x2 \cosh \frac y2
\cosh \frac a2.
\end{equation}

Therefore,
\begin{align*}
\frac{\partial A}{\partial x}=&\frac{\sinh z }{ \sinh a}
\cdot\frac{2\zeta_2 \cosh \frac x2 \cosh \frac y2 \cosh \frac a2}{2\zeta_1 \cosh \frac A2 \cosh \frac y2 \cosh \frac z2}\\
=&\frac{\sinh \frac z2 \cosh \frac x2 \zeta_2}{\sinh \frac a2
\cosh \frac A2 \zeta_1}.
\end{align*}

When $A=B$, by Lemma \ref{thm:ptolemy}, the vertices of the
hyperbolic quadrilateral lie on a circle, a horocycle or an
equidistant curve. Thus $\zeta_1=\zeta_2$.

Therefore
$$\frac{\partial A}{\partial x}|_{A=B}=\frac{\sinh \frac z2 \cosh \frac x2}{\sinh \frac a2 \cosh \frac B2}=\frac{\partial B}{\partial x}.$$

Second, we verify that $$\frac{\partial A}{\partial
a}|_{A=B}=\frac{\partial B}{\partial a}.$$ In the triangle of
lengths $y,z,A$, by the cosine rule,
$$\cosh A=\cosh y \cosh z- \sinh y \sinh z \cos(\alpha+\beta).$$
Taking derivative of both sides with respect to $a$, we have
$$\frac{\partial A}{\partial a}=\frac{\sinh y \sinh z \sin(\alpha+\beta)}{\sinh A} \cdot
(\frac{\partial \alpha}{\partial a}+\frac{\partial \beta}{\partial
a}).$$

In the triangle of length $x,y,a$, by the derivative of cosine
rule \cite{luo-rigidity}, we have
$$\frac{\partial \alpha}{\partial a}=-\frac{\sinh x}{\sinh y \sinh a \sin \alpha}\cos \alpha'.$$

In the triangle of length $w,z,a$, by the derivative of cosine
rule \cite{luo-rigidity}, we have
$$\frac{\partial \beta}{\partial a}=-\frac{\sinh w}{\sinh z \sinh a \sin \beta}\cos \beta'.$$

Therefore
$$\frac{\partial A}{\partial a}=-\frac{\sin(\alpha+\beta)}{\sinh A \sinh a}(\frac{\sinh z\sinh x \cos \alpha'}{\sin \alpha}+
\frac{\sinh y\sinh w \cos \beta'}{\sin \beta}).$$

By the equations (\ref{fml:ratio1}) and (\ref{fml:ratio2}), we
have
$$\frac{\sin(\alpha+\beta)}{\sin \alpha}=\frac{\zeta_2 \cosh \frac a2 \sinh \frac A2}{\zeta_1 \cosh \frac z2 \sinh \frac x2}.$$

By the similar calculation, we have
$$\frac{\sin(\alpha+\beta)}{\sin \beta}=\frac{\zeta_3 \cosh \frac a2 \sinh \frac A2}{\zeta_1 \cosh \frac y2 \sinh \frac w2},$$
there $\zeta_3$ is the corresponding quantity of the triangle of
lengths $w,z,a$.

Therefore
$$\frac{\partial A}{\partial a}=-\frac 1{\cosh \frac A2 \sinh \frac a2}(\frac{\zeta_2}{\zeta_1}\sinh \frac z2 \cosh \frac x2 \cos \alpha'
+ \frac{\zeta_3}{\zeta_1}\sinh \frac y2 \cosh \frac w2 \cos
\beta').$$

When $A=B,$ by Lemma \ref{thm:ptolemy}, the vertices of the
hyperbolic quadrilateral lie on a circle, a horocycle or an
equidistant curve. Thus $\zeta_1=\zeta_2=\zeta_3.$

Therefore
$$\frac{\partial A}{\partial a}|_{A=B}=-\frac 1{\cosh \frac B2 \sinh \frac a2}(\sinh \frac z2 \cosh \frac x2 \cos \alpha'
+ \sinh \frac y2 \cosh \frac w2 \cos \beta').$$

On the other hand
$$\frac{\partial B}{\partial a}=-\frac{\sinh \frac B2 \cosh \frac a2}{ \cosh \frac B2 \sinh \frac a2}.$$

To prove $\frac{\partial A}{\partial a}|_{A=B}=\frac{\partial
B}{\partial a},$ it remains to show that
\begin{equation}\label{fml:eqa1}
\sinh \frac z2 \cosh \frac x2 \cos \alpha' + \sinh \frac y2 \cosh
\frac w2 \cos \beta'= \sinh \frac B2 \cosh \frac a2.
\end{equation}

In the triangle of length $x,y,a$, by the cosine rule,
$$\cos \alpha'=\frac{-\cosh y+ \cosh x \cosh a}{\sinh x \sinh a}.$$

In the triangle of length $w,z,a$, by the cosine rule,
$$\cos \beta'=\frac{-\cosh z+ \cosh w \cosh a}{\sinh w \sinh a}.$$

Therefore the equation (\ref{fml:eqa1}) is equivalent to
\begin{align}\label{fml:eqa2}
&\frac{\sinh \frac z2}{2\sinh \frac x2}(-\cosh y+ \cosh x \cosh a)+ \frac{\sinh \frac y2}{2\sinh \frac w2}(-\cosh z+ \cosh w \cosh a)\\
&=\sinh \frac B2 \cosh \frac a2 \sinh a.  \notag
\end{align}

Using the notation $s(t)=\sinh \frac t2$, we have $\cosh t=
2s(t)^2+1$. Therefore the equation (\ref{fml:eqa2}) is equivalent
to
\begin{align*}
&\frac{s(z)}{s(x)}(2s(a)^2s(x)^2+s(a)^2+s(x)^2-s(y)^2)\\
&+\frac{s(y)}{s(w)}(2s(a)^2s(w)^2+s(a)^2+s(w)^2-s(z)^2)\\
&=2s(B)s(a)(s(a)^2+1)\\
&=2(s(x)s(z)+s(y)s(w))(s(a)^2+1),
\end{align*}
the second equality is due to Ptolemy's formula.

After simplify we obtain
$$s(a)^2=(s(x)s(z)+s(y)s(w))\frac{s(x)s(w)+s(y)s(z)}{s(x)s(y)+s(z)s(w)}.$$
This is exactly the result of Lemma \ref{thm:ptolemy}.

\end{document}